
\documentclass{aic}

\aicAUTHORdetails{%
  title = {A Tight Erd\H{o}s-P\'osa Function for Planar Minors}, 
  author = {Wouter Cames van Batenburg, Tony Huynh, Gwena\"el Joret, and Jean-Florent Raymond},
  plaintextauthor = {Wouter Cames van Batenburg, Tony Huynh, Gwenael Joret, Jean-Florent Raymond},
    %
    %
  plaintexttitle = {A Tight Erdős-Posa Function for Planar Minors}, 
    %
    %
    %
   %
}   

\aicEDITORdetails{%
   year={2019},
   number={2},
   received={15 November 2018},   
   revised={19 April 2019},    
   published={30 October 2019},  
   doi={10.19086/aic.10807},      
}   

\usepackage{xcolor}
\usepackage[numbers,sort&compress]{natbib}
\usepackage{enumitem}

\usepackage{tikz}
\tikzstyle{black node} = [draw, circle, fill = black, minimum size = 4pt, inner sep = 0pt]
\tikzstyle{normal} = [draw=none, fill = none, rectangle, minimum size =0]


 \usepackage{aliascnt}

 \newtheorem{theorem}{Theorem}[section]

 \newaliascnt{lemma}{theorem}
 \newtheorem{lemma}[lemma]{Lemma}
 \aliascntresetthe{lemma}

 \newaliascnt{observation}{theorem}
 
 \aliascntresetthe{observation}

 \newaliascnt{corollary}{theorem}
 \newtheorem{corollary}[corollary]{Corollary}
 \aliascntresetthe{corollary}

 \newaliascnt{conjecture}{theorem}
 
 \aliascntresetthe{conjecture}

 \newaliascnt{claim}{theorem}
 \newtheorem{claim}[claim]{Claim}
 \aliascntresetthe{claim}

\def\cqedsymbol{\ifmmode$\lrcorner$\else{\unskip\nobreak\hfil
\penalty50\hskip1em\null\nobreak\hfil$\lrcorner$
\parfillskip=0pt\finalhyphendemerits=0\endgraf}\fi}

\newcommand{\cqed}{\renewcommand{\qed}{\cqedsymbol}}

\newcommand{\arxiv}[1]{\href{http://arxiv.org/abs/#1}{arXiv:#1}}

\newcommand{\R}{\mathbb{R}}
\newcommand{\N}{\mathbb{N}}

\newcommand{\sR}{\mathsf{R}} 
\newcommand{\mR}{\mathcal{R}} 
\newcommand{\varphip}{\varphi}
\newcommand{\varphipa}{\varphi'}
\newcommand{\Gsmall}{G_{\mathsf{s}}}
\newcommand{\Gbig}{G_{\mathsf{b}}}
\newcommand{\opt}{\mathsf{OPT}}

\newcommand{\ep}{Erd\H{o}s-P\'osa}
\newcommand{\sref}[1]{\scriptscriptstyle \ref{#1}}

\DeclareMathOperator{\tw}{\mathsf{tw}}
\newcommand{\mc}{\mathcal}

\begin{document}
\begin{frontmatter}[classification=text]

\title{A Tight Erd\H{o}s-P\'osa Function\\ for Planar Minors\titlefootnote{A preliminary version of this paper appeared as an extended abstract in the Proceedings of the Thirtieth Annual ACM-SIAM Symposium on Discrete Algorithms (SODA '19)~\cite{SODAversion}.}} 

\author[wcvb]{Wouter Cames van Batenburg\thanks{Supported by an ARC grant from the Wallonia-Brussels Federation of Belgium.}}
\author[th]{Tony Huynh\thanks{Supported by ERC Consolidator Grant 615640-ForEFront.}}
\author[gj]{Gwena\"el Joret\thanks{Supported by an ARC grant from the Wallonia-Brussels Federation of Belgium.}}
\author[jfr]{Jean-Florent Raymond\thanks{Supported by ERC Consolidator Grant 648527-DISTRUCT.}}

\begin{abstract}
Let $H$ be a planar graph.
By a classical result of Robertson and Seymour, there is a function $f:\N \to \R$ such that for all $k \in \N$ and all graphs $G$, either $G$ contains $k$ vertex-disjoint subgraphs each containing $H$ as a minor,  or there is a subset $X$ of at most $f(k)$ vertices such that $G-X$ has no $H$-minor.
We prove that this remains true with $f(k) = c k \log k$ for some constant $c=c(H)$.
This bound is best possible, up to the value of $c$, and improves upon a recent result of Chekuri and Chuzhoy [STOC 2013], who established this with $f(k) = c k \log^d k$ for some universal constant $d$.
The proof is constructive and yields a polynomial-time $O(\log \opt)$-approximation algorithm for packing subgraphs containing an $H$-minor.
\end{abstract}
\end{frontmatter}

\section{Introduction}

In 1965, Erd\H{o}s and P\'osa~\cite{Erdos1965independent} proved that there is a function $f(k)=O(k\log k)$ such that for every graph $G$ and every $k \in \N$, either $G$ contains $k$ vertex-disjoint cycles, or there is a set $X$ of at most $f(k)$ vertices such that $G-X$ is a forest.
Many variants and generalizations of this theorem have been developed over the years, such as for cycles satisfying various constraints~\cite{KKM11, BJS18, KK13, Thomassen1988, Joos2017, BBR07, FH14, BL81, KW06, PW12, RR01, Tho01, Wol11, KK12, HJW18}, directed cycles~\cite{RRST96, HM13}, matroid circuits~\cite{GK09}, and immersions~\cite{liu2015, GKRT16}; see~\cite{Reed97tree, Raymond2017} for surveys.

In this paper, the objects of interest are graph minor models.
A graph $H$ is a \emph{minor} of a graph $G$ if $H$ can be obtained from a subgraph of $G$ by contracting  edges.
If $H$ is not a minor of $G$, then $G$ is said to be \emph{$H$-minor free}.
For every graph $H$,
an {\em $H$-model} $\mc{M}$ in a graph $G$ is a collection
$\{M_{x} \subseteq G: x\in V(H)\}$ of vertex-disjoint connected subgraphs of $G$
such that $M_{x}$ and $M_{y}$ are linked by an edge in $G$ for every edge $xy \in E(H)$. 
We define $V(\mc{M})=\bigcup_{x\in V(H)}V(M_x)$ and the \emph{size} of $\mc{M}$ as $|V(\mc{M})|$.

Two $H$-models $\mc{M}$ and $\mc{M'}$ are \emph{disjoint} if $V(\mc{M}) \cap V(\mc{M'}) = \emptyset$.  It is easy to see that $H$ is a minor of $G$ if and only if there is an $H$-model in $G$.

Let $\nu_{H}(G)$ be the maximum number of pairwise disjoint $H$-models in $G$.
Let $\tau_{H}(G)$ be the minimum size of a subset $X \subseteq V(G)$ such that $G - X$
has no $H$-model.
Clearly, $\nu_{H}(G) \leq \tau_{H}(G)$.  We say that the \emph{\ep{} property holds for $H$-models} if there exists a \emph{bounding function} $f \colon \N \to \mathbb{R}$ such that
\[
\tau_H (G) \leq f(\nu_H(G))
\]
holds for every graph $G$.

By a classical result of Robertson and Seymour~\cite{robertson1986graph}, the \ep{} property holds for $H$-models if and only if $H$ is planar;
the fact that it does hold when $H$ is planar is a consequence of their Grid Minor Theorem.
The original bounding function $f$ obtained by Robertson and Seymour for planar $H$ was exponential.
In 2013, Chekuri and Chuzhoy~\cite{chekuri2013large} proved that one can take $f(k)= c k \log^d (k+1)$ as bounding function for some universal constant $d$, and some constant $c=c(H)$.

No explicit value for the constant $d$ is given in~\cite{chekuri2013large} but a quick analysis of their proof suggests that it is at least a double-digit integer.
Our main result is that we can take $d=1$, which is best possible.

\begin{theorem}[Main theorem]
\label{thm:main}
For each planar graph $H$, there exists a constant $c=c(H)$ such that the \ep{} property holds for $H$-models with bounding function $f(k)=ck \log (k+1)$.
\end{theorem}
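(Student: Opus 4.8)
The plan is to establish the upper bound $\tau_H(G) \leq c\,k\log(k+1)$, where $k = \nu_H(G)$, by a recursive argument that peels off $H$-models one batch at a time while keeping careful track of the hitting set size. The engine driving the recursion should be the following \emph{approximate packing-covering trade-off}: for a suitable integer parameter $r = r(H)$, either $G$ contains a large wall / grid minor of size polynomial in the current packing number (which, since $H$ is planar, yields many disjoint $H$-models via the Grid Minor Theorem and a standard ``planar $H$ embeds in a grid'' argument), or $\tw(G)$ is bounded by a polynomial in $\nu_H(G)$, in which case we can hit all $H$-models cheaply using a tree decomposition of small width. The subtlety is that to get $\log k$ rather than $\log^d k$ we cannot afford the naive win of ``remove one $H$-model, recurse,'' which costs $\Theta(k)$ levels; instead each recursion step must remove a constant fraction of the packing number, so that only $O(\log k)$ levels occur and each level contributes $O(k)$ to the hitting set, for a total of $O(k\log k)$.

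Concretely, I would proceed as follows. First, fix a wall size $w = w(H)$ large enough that an $H$-model lives inside the $w\times w$ wall (this uses only planarity of $H$). Second, prove a \emph{large-wall-or-small-treewidth} dichotomy tuned to $H$: there is a polynomial $p$ such that every graph $G$ either has treewidth at most $p(\nu_H(G))$ or contains a wall whose size forces $\Omega(\nu_H(G))$ disjoint $H$-models — here I would invoke the polynomial grid-minor bound (available post-2013) combined with the fact that a $t\times t$ wall contains $\Omega(t^2/w^2)$ disjoint copies of the $w\times w$ wall. Third, in the small-treewidth case, use the tree decomposition together with a ``cut at a bag of size $< p(\nu_H)$'' routine: repeatedly delete a bag separating off a part with no remaining $H$-model until none remain, which costs $O(\nu_H(G) \cdot p(\nu_H(G)))$ — too much by itself, so this must be combined with the recursion, not used in isolation. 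Fourth, and this is the crucial step for the $\log$ exponent, I would design a cleanup step that, from a large wall, extracts $\sim k/2$ disjoint $H$-models \emph{and} a small separator $X$ (of size $O(k)$, not $O(k\cdot\mathrm{poly}(k))$) such that $G - X$ has packing number at most $k/2$; then $\tau_H(G) \leq |X| + \tau_H(G-X) \leq O(k) + c(k/2)\log(k/2+1)$, which closes the induction with the right constant provided $O(k) \le ck\log 2$ absorbs the additive term.

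The main obstacle — and where I expect most of the work to go — is the fourth step: simultaneously certifying a \emph{linear-size} hitting set for ``half'' of the models while preserving enough structure to recurse. The tension is that small-treewidth covering naturally costs treewidth per separator and many separators, giving $\mathrm{poly}(k)$ per level, whereas large-wall packing gives many models but no obvious cheap separator. The resolution I would aim for is a ``linkage/well-linked set'' argument in the wall: take a maximal well-linked set or a large flat wall, route $\Omega(k)$ disjoint $H$-models through disjoint bricks of the wall, and observe that the (linear-size) boundary of a sub-grid region, or a small set of ``apex''-type vertices isolated by the flatness, forms the separator $X$; after deleting $X$ the part of $G$ outside the wall region can contain at most the remaining $\sim k/2$ models by a counting argument on how the original $k$ models must have intersected the well-linked region. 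Getting the constants in this trade-off to line up — ensuring the separator is genuinely $O(k)$ and that the drop in packing number is genuinely multiplicative — is the delicate part; I would expect to need the Flat Wall Theorem (or a Menger-type linkage lemma) rather than raw treewidth bounds, and to spend real effort verifying that every constant depends only on $H$ and not on $k$.
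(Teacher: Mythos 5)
Your sketch identifies the right target quantity (a $\log k$ rather than $\log^d k$ overhead) and correctly diagnoses that a naive recursion paying $\mathrm{poly}(k)$ per level is hopeless, but the mechanism you propose for achieving the logarithmic bound is not the one the paper uses, and as written it contains a genuine gap that I do not see how to close.

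The crucial unsupported step is your fourth one: you posit that from a large wall one can simultaneously extract $\Theta(k)$ disjoint $H$-models \emph{and} a vertex set $X$ of size $O(k)$ with $\nu_H(G-X) \le k/2$. Nothing in the wall machinery (flat or otherwise) gives such an $X$. A flat sub-wall lets you isolate what happens \emph{inside} a compass of the wall by a small boundary plus a bounded apex set, but it says nothing about the $H$-models living entirely outside that region; $G$ might consist of a huge wall plus $k$ vertex-disjoint $H$-models in a component untouched by the wall, in which case no separator drawn from the wall geometry reduces $\nu_H$ at all. Your appeal to ``a counting argument on how the original $k$ models must have intersected the well-linked region'' is exactly the assertion that needs a proof, and the tension you name (a cheap separator versus a genuine multiplicative drop in $\nu_H$) is not merely a matter of tuning constants --- it is the theorem. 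Moreover, your framing dismisses ``remove one $H$-model and recurse'' too quickly: peeling one model per level \emph{does} give $O(k\log k)$ provided each removed model has only $O(\log k)$ vertices, since $k$ levels at $O(\log k)$ cost each is precisely $O(k\log k)$. That is in fact what the paper does.

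The paper's actual route is structurally different and leans on two external ingredients you do not invoke. It first proves a technical dichotomy (\autoref{thm:main_technical}): every $G$ contains a bounded-size $H$-model, or a $K_p$-model of size $O(\log|G|)$, or admits a bounded-order separation $(A,B)$ with $G[A]$ $H$-minor-free and $|A|$ large relative to $|A\cap B|$. Then \autoref{thm:main} is deduced by taking a lexicographically minimum counterexample $G$: the separation outcome is killed by a reduction lemma of Fiorini--Joret--Wood (\autoref{thm:fjw13}) which would produce a smaller counterexample; the remaining outcomes yield an $H$-model $\mathcal{M}$ of size $O(\log|G|)$; and --- this is the key substitute for your step four --- the kernelization theorem of Fomin--Lokshtanov--Misra--Saurabh (\autoref{thm:fominkernel}) bounds $|G| \le \mathrm{poly}(\tau_H(G)) = \mathrm{poly}(k)$ for a minor-minimal $G$, so $|V(\mathcal{M})| = O(\log k)$. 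Deleting $V(\mathcal{M})$ drops $\nu_H$ by one and $\tau_H$ by at most $O(\log k)$, which telescopes to $O(k\log k)$. Without something playing the role of the kernel bound (bounding $|G|$, hence model size, polynomially in $k$ for extremal instances), neither the paper's argument nor your proposed one has a way to keep the per-level cost down; and without the separation-reduction lemma, there is no way to discard the graphs where the dichotomy only delivers a separation rather than a small model. Both of these are deep results whose proofs are far from the wall/linkage toolkit you outline, so step four cannot be repaired by tightening constants inside that toolkit.
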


A $O(k \log k)$ bounding function is best possible for the following reason.
For $H=K_3$ an $\Omega(k \log k)$ lower bound on bounding functions was already established by Erd\H{o}s and P\'osa~\cite{Erdos1965independent}.
This lower bound holds more generally when $H$ is any planar graph containing a cycle, as can be seen  by considering $n$-vertex graphs $G$ with treewidth $\Omega(n)$ and girth $\Omega(\log n)$ (as constructed in \cite{morgenstern1994existence}, for instance).
Then $\tau_{H}(G)= \Omega(n)$ (because $H$-minor free graphs have treewidth $O(1)$ when $H$ is planar) and $\nu_{H}(G)= O(n/\log n)$ (because each $H$-model contains a cycle).
We also note that if, on the other hand, $H$ is a forest, then the $\Omega(k \log k)$ lower bound does not apply, and it is in fact already known that there is a $O(k)$ bounding function~\cite{FJW2013}.

Before pursuing further, let us emphasize that the constant $c=c(H)$ we obtain in the proof of \autoref{thm:main} is enormous, in fact it is not even known to be computable.
On the other hand, $c$ depends polynomially on $|H|$ in the bounding function $f(k)= c k \log^d (k+1)$ established by Chekuri and Chuzhoy~\cite{chekuri2013large} (this follows from~\cite{chekuri2013large} combined with their celebrated Polynomial Grid Minor Theorem~\cite{chekuri2016polynomial,chuzhoy2015excluded}).
Thus, our main result can be seen as a trade-off, where we decrease the value of $d$ to an optimal $d=1$ at the price of a much bigger constant factor $c$. 
Computability of our constant $c$ in \autoref{thm:main} does not follow from our proofs because we use a result of Fomin, Lokshtanov, Misra, and Saurabh~\cite{FominFdel} about minor-minimal graphs $G$ with $\tau_{H}(G)=t$, whose proof is non-constructive (see Section~\ref{sec:tighep}). 
Finding good bounds on $c$ as a function of $|H|$ is left as an open problem, in particular it would be interesting to determine whether $c$ could depend polynomially on $|H|$. 

Prior to this paper, when $H$ is planar but not a forest, a $O(k \log k)$ bounding function was known to hold for $H$-models if $H$ is a triangle~\cite{Erdos1965independent}, a cycle~\cite{FH14, MNSW17}, a multigraph consisting of two vertices linked by parallel edges~\cite{Chatzidimitriou2017logopt}, and more generally if $H$ is any minor of a wheel~\cite{ErdosPosaWheels}. 
The authors of~\cite{ErdosPosaWheels} developed general tools to tackle arbitrary planar graphs $H$, together with some techniques that are specific to wheels.
In this paper we build on their approach.
Our main technical contribution is a series of lemmas which allowed us to develop the `right' generalization of the objects used in~\cite{ErdosPosaWheels}.
An overview of the proof will be given shortly but first let us mention some combinatorial and algorithmic consequences of our result.

\section{Consequences of our results}
\label{sec:combcor}

We describe in this section several consequences of our results. Their proofs are given in Sections \ref{sec:apalg} and~\ref{sec:procor}.

\subsection*{Approximation algorithms for packing and covering models}
Our proof of \autoref{thm:main} is constructive, in the sense that it can be turned into a polynomial-time algorithm computing both a collection $\mathcal{C}$ of $k$ disjoint $H$-models in the input graph $G$, and a subset $X$ of at most $c' k \log(k+1)$ vertices such that $G-X$ has no $H$-model, for some constant $c'$ depending on the constant $c$ in \autoref{thm:main} and for some $k \in \N$.
Note that $\mathcal{C}$, $X$ together witness the fact that (1) $|X|$ is within a $O(\log \tau_H(G))$ factor of $\tau_H(G)$ (since $k \leq \tau_H(G)$), and (2) $|\mathcal{C}|$ is within a $\Omega\left (\frac{1}{\log \nu_H(G)} \right)$ factor of $\nu_H(G)$ (since $k \leq \nu_H(G) \leq c' k \log(k+1)$).
Thus, we get $O(\log(\opt))$-approximation algorithms for both the packing and covering problems associated to planar $H$-models.

\begin{corollary}
\label{cor:approx}
For each fixed planar graph $H$, there is a polynomial-time $O(\log(\opt))$-approximation algorithm both for computing $\nu_H(G)$ and $\tau_H(G)$.
\end{corollary}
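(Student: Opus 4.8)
The plan is to derive \autoref{cor:approx} directly from the constructive form of \autoref{thm:main} described just above: a polynomial-time algorithm that, on input $G$ (with $H$ fixed), outputs an integer $k\in\N$, a family $\mathcal{C}$ of $k$ pairwise disjoint $H$-models in $G$, and a set $X\subseteq V(G)$ with $|X|\le c'k\log(k+1)$ such that $G-X$ has no $H$-model. (When $G$ is $H$-minor free the algorithm simply returns $k=0$, $\mathcal{C}=\emptyset$, $X=\emptyset$, so we may assume $k\ge 1$ below.) First I would record the two chains of inequalities that these objects certify. On the covering side, $X$ hits every $H$-model, so $\tau_H(G)\le |X|\le c'k\log(k+1)$, while $\mathcal{C}$ gives $k\le\nu_H(G)\le\tau_H(G)$; combining these, $\tau_H(G)\le c'\,\tau_H(G)\log(\tau_H(G)+1)$, so $|X|$ lies within a multiplicative factor $O(\log\tau_H(G))$ of $\tau_H(G)$. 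On the packing side, $k\le\nu_H(G)\le\tau_H(G)\le c'k\log(k+1)$, hence $\nu_H(G)/k\le c'\log(k+1)\le c'\log(\nu_H(G)+1)$, so $k$ approximates $\nu_H(G)$ within a factor $O(\log\nu_H(G))$.

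The algorithm for \autoref{cor:approx} is then immediate: run the constructive procedure, return $|X|$ (together with the certificate $X$) as the estimate for $\tau_H(G)$, and return $k$ (together with the certificate $\mathcal{C}$) as the estimate for $\nu_H(G)$. Since $\opt$ is $\tau_H(G)$ for the covering problem and $\nu_H(G)$ for the packing problem, both estimates are within an $O(\log\opt)$ factor of $\opt$ by the inequalities above, and the whole computation runs in polynomial time because the constructive form of \autoref{thm:main} does.

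The only genuine work is therefore to pin down the constructive claim and check that it runs in polynomial time; everything else is the elementary arithmetic above. I expect the main obstacle to be bookkeeping inside the proof of \autoref{thm:main} rather than anything new here: one must verify that every existential step in that proof (producing the $k$ disjoint models of $\mathcal{C}$ and the hitting set $X$) can be carried out algorithmically in time polynomial in $|V(G)|$ for fixed $H$, and that the non-constructive ingredient borrowed from~\cite{FominFdel} affects only the value of the constant $c$ (equivalently $c'$) and never enters the running time. Granting this — which is exactly what Section~\ref{sec:apalg} will establish — \autoref{cor:approx} follows.
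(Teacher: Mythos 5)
Your proposal is correct and follows exactly the same route as the paper: the approximation guarantees are derived from the constructive form of Theorem~\ref{thm:main} exactly as sketched in the paragraph preceding Corollary~\ref{cor:approx}, and the genuine work of verifying polynomial running time (including the observation that the non-constructive kernel bound from~\cite{FominFdel} only enters the constant, not the algorithm) is deferred to Section~\ref{sec:apalg}, which is precisely what the paper does.
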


The result for covering is already known.
In fact, for every planar graph $H$, there is even a constant factor approximation algorithm for computing $\tau_H(G)$.
Indeed, a randomized constant factor approximation was first developed by Fomin, Lokshtanov, Misra, and Saurabh~\cite{FominLMS12}, and very recently a deterministic one was obtained by Gupta, Lee, Li, Manurangsi, and W{\l}odarczyk~\cite{Gupta2018LLMWlosing}.

On the other hand, the result for packing is new.
It is also close to best possible in the following sense:
When $H=K_3$ the packing problem corresponds to the well-studied problem of packing cycles, which is known to be quasi-NP-hard to approximate to within a ratio of $O(\log^{\frac{1}{2}-\varepsilon} \opt)$~\cite{Friggstad07}. 
We note also that when $H$ is a forest, $\nu_H(G)$ can be approximated to within a constant factor~\cite{FJW2013}.

\subsection*{Large treewidth graph decompositions} A second consequence of our main theorem is the following partitioning corollary.
\begin{corollary}
\label{cor:tw}
There is a function $s_{\sref{cor:tw}}:\N \to \N$ such that for all integers $r, k \geq 1$, every graph $G$ of treewidth at least
\[
s_{\sref{cor:tw}}(r) \cdot k \log (k+1)
\]
has $k$ vertex-disjoint subgraphs $G_1, \dots, G_k$, each of treewidth at least $r$.
\end{corollary}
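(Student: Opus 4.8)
The plan is to deduce this from the Main Theorem by choosing $H$ to be a large grid. Fix $r \geq 1$ and let $H = H_r$ be the $(g(r) \times g(r))$-grid, where $g$ is the function from the Grid Minor Theorem guaranteeing that every graph of treewidth at least $g(r)$ contains an $r\times r$-grid as a minor (hence has treewidth at least $r$, as a subgraph containing such a grid does). Since $H_r$ is planar, \autoref{thm:main} supplies a constant $c = c(H_r)$ such that $\tau_{H_r}(G) \leq c\, k' \log(k'+1)$ whenever $\nu_{H_r}(G) \leq k'$; equivalently, if $\tau_{H_r}(G) > c\, k \log(k+1)$ then $\nu_{H_r}(G) \geq k+1 \geq k$. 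I set $s_{\sref{cor:tw}}(r)$ to be a constant (depending only on $r$) chosen so that $s_{\sref{cor:tw}}(r)\cdot k\log(k+1)$ exceeds the threshold forcing $\tau_{H_r} > c\, k\log(k+1)$; the point of the next step is that large treewidth forces large $\tau_{H_r}$.

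The bridge between treewidth and $\tau_{H_r}$ is the standard fact that $H_r$-minor-free graphs have bounded treewidth: since $H_r$ is planar, the Excluded Grid/Planar Minor Theorem of Robertson and Seymour gives a constant $w = w(r)$ such that every graph with no $H_r$-minor has treewidth at most $w$. Consequently, if $G$ has treewidth at least $(w+1)\cdot c\, k\log(k+1)$, say, I want to conclude $\tau_{H_r}(G) > c\, k\log(k+1)$. This needs a small amount of care: deleting a set $X$ of $t$ vertices decreases treewidth by at most $t$, so if $\tw(G) > t + w$ then $\tw(G - X) > w$ for every $X$ with $|X| \leq t$, hence $G - X$ has an $H_r$-minor; thus $\tau_{H_r}(G) > t$. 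Taking $t = c\, k\log(k+1)$ and $\tw(G) \geq (c\, k\log(k+1)) + w + 1$ does the job. Since $k\log(k+1) \geq 1$ for $k \geq 1$, absorbing the additive $w+1$ into the multiplicative constant lets me take $s_{\sref{cor:tw}}(r) := c(H_r) + w(r) + 1$, which depends only on $r$.

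Finally I assemble the conclusion. Given $G$ of treewidth at least $s_{\sref{cor:tw}}(r)\cdot k\log(k+1)$, the previous paragraph yields $\tau_{H_r}(G) > c\, k\log(k+1)$, whence \autoref{thm:main} gives $\nu_{H_r}(G) \geq k$: there are $k$ pairwise disjoint $H_r$-models $\mc{M}_1,\dots,\mc{M}_k$ in $G$. Setting $G_i := \mc{M}_i$ (viewed as the union of its branch sets together with one chosen connecting edge per edge of $H_r$, a subgraph of $G$) gives $k$ vertex-disjoint subgraphs, and each $G_i$ contains $H_r$ as a minor, hence contains an $r\times r$-grid as a minor, hence has treewidth at least $r$. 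The main (and essentially only) obstacle is the bookkeeping in the second paragraph — making sure the additive loss from both the grid theorem's treewidth bound $w(r)$ and the "$\tw$ drops by at most $|X|$" estimate is correctly folded into the function $s_{\sref{cor:tw}}$ so that it depends on $r$ alone and not on $k$; everything else is a direct invocation of \autoref{thm:main} and the Planar Grid Minor Theorem.
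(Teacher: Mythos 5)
Your proposal is correct and follows essentially the same route as the paper: pick a grid $H_r$, apply \autoref{thm:main} to that grid, and use the Grid Minor Theorem (\autoref{th:gridminor}) together with the inequality $\tw(G) \le \tw(G-X) + |X|$ to turn a treewidth lower bound on $G$ into a lower bound on $\tau_{H_r}(G)$, hence on $\nu_{H_r}(G)$, and then read off the $k$ disjoint high-treewidth subgraphs from the models. The only (harmless) deviation is that you take $H_r$ to be the $g(r)\times g(r)$-grid rather than the $r\times r$-grid used in the paper; since a subgraph containing an $r\times r$-grid minor already has treewidth at least $r$, the $r\times r$-grid would have sufficed and gives a slightly cleaner bookkeeping, but your larger choice is not incorrect.
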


In particular, the treewidth of every graph not containing $k$ disjoint copies of a fixed planar graph $H$ as a minor is $O(k \log k)$, where the hidden constant depends on $H$.
This is best possible when $H$ contains a cycle (see the paragraph following \autoref{thm:main}).
A similar result with an $s(r) \cdot k \log^d (k+1)$ bound for some universal constant $d$ was obtained by Chekuri and Chuzhoy \cite[Theorem~1.1]{chekuri2013large}.
Again, we remark that, while the poly-logarithmic dependency on $k$ in their bound is not optimal, their theorem has the extra advantage that $s$ can be taken as a polynomial, which is not the case in our proof of \autoref{cor:tw}.

\subsection*{Computing minor-closed bidimensional parameters.}

Let $\pi$ be a \emph{graph parameter}, that is, a function mapping
graphs to integers and that is constant within each isomorphism
class. We say that $\pi$ is \emph{minor-closed} if $\pi(H) \leq
\pi(G)$ for every minor $H$ of every graph $G$.
In \cite{chekuri2013large}, Chekuri and Chuzhoy gave algorithms to
compute graph parameters satisfying certain conditions.

\begin{theorem}[{\cite[Theorem~5.3]{chekuri2013large}}]\label{th:bidimcc}
  Let $\pi$ be a minor-closed parameter that is positive on all
  graphs with treewidth at least $p$, is at least the sum over the
  components of a disconnected graph, and can be computed in
  time $h(w) n^{O(1)}$ given a tree-decomposition of width $w$ of the
  graph.

  Then there is a constant $d$ and an algorithm that, given an
  $n$-vertex graph $G$ and an integer $k$, decides whether $\pi(G)
  \leq k$ in time
  \[
    \left ( 2^{O(p^2 k \cdot \log^d pk )} + h(O(p^2k\cdot \log^dpk))\right ) n^{O(1)}.
  \]
\end{theorem}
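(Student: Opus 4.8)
The plan is a bidimensionality-style win/win argument. I will show that $\pi(G)\le k$ forces the treewidth of $G$ to be $O(p^{2}k\log^{d}(pk))$ for a universal constant $d$; the algorithm then first computes an approximate tree-decomposition of $G$, and either its width is too large --- in which case it safely answers ``$\pi(G)>k$'' --- or it is small enough that the assumed width-$w$ dynamic program can be run to evaluate $\pi(G)$ exactly and compare it with $k$.

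The combinatorial heart is the following claim: if $\tw(G)\ge W$, where $W:=\mathrm{poly}(p)\cdot(k+1)\log^{d}(p(k+1))$ is the threshold in the large-treewidth graph decomposition theorem of Chekuri and Chuzhoy (their polynomial-$s$, $\log^{d}$-factor version; the weaker \autoref{cor:tw} proved here would also do, with $d=1$ but a non-polynomial dependence on $p$), then $\pi(G)>k$. To prove this, apply that decomposition theorem with parameters $r:=p$ and $k+1$, obtaining $k+1$ vertex-disjoint subgraphs $G_{1},\dots,G_{k+1}$ of $G$, each of treewidth at least $p$. Since the treewidth of a graph is the maximum treewidth over its components, I may assume each $G_{i}$ is connected (replace it by one of its components of treewidth $\ge p$). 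Then $D:=G_{1}\cup\dots\cup G_{k+1}$ is a subgraph, hence a minor, of $G$, and its connected components are exactly $G_{1},\dots,G_{k+1}$; using in turn that $\pi$ is minor-closed, that $\pi$ is at least the sum over the components of a disconnected graph, and that $\pi$ is positive on graphs of treewidth at least $p$,
\[
\pi(G)\ \ge\ \pi(D)\ \ge\ \sum_{i=1}^{k+1}\pi(G_{i})\ \ge\ k+1\ >\ k.
\]
(The case $k=0$ is trivial, so I may assume $k+1\ge 2$, which makes $D$ genuinely disconnected.) Contrapositively, $\pi(G)\le k$ implies $\tw(G)<W=O(p^{2}k\log^{d}(pk))$.

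Given the claim, the algorithm is routine. Fix $W:=c\,p^{2}k\log^{d}(pk)$ with $c$ large enough to dominate the threshold above, and run a standard constant-factor treewidth approximation in time $2^{O(W)}n^{O(1)}$: it either certifies $\tw(G)>W$ --- in which case I output ``$\pi(G)>k$'', correct by the claim --- or it returns a tree-decomposition of $G$ of width $O(W)$. In the latter case I feed that decomposition to the assumed $h(w)\,n^{O(1)}$-time procedure, compute $\pi(G)$ exactly, and compare it with $k$. The total running time is
\[
2^{O(W)}n^{O(1)}+h\bigl(O(W)\bigr)\,n^{O(1)}\ =\ \Bigl(2^{O(p^{2}k\log^{d}pk)}+h\bigl(O(p^{2}k\log^{d}pk)\bigr)\Bigr)n^{O(1)},
\]
as claimed.

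The only deep ingredient is the large-treewidth decomposition theorem used in the claim; everything else --- gluing the $G_{i}$ together through minor-closedness and superadditivity, and wrapping a win/win test around a treewidth approximation --- is bookkeeping, so the decomposition theorem is where I expect the real difficulty to sit. The one subtlety I would be careful about is the reduction to connected $G_{i}$: without it some $G_{i}$ might have a component of very negative $\pi$-value and the telescoping inequality above could fail; passing to a single high-treewidth component of each $G_{i}$ repairs this at no cost, since that component is still a subgraph (hence minor) of $G$ and still has treewidth at least $p$.
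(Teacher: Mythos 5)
Your proposal is correct and follows the same win/win skeleton the paper uses for the analogous \autoref{cor:miclopar1} (the paper itself only cites \autoref{th:bidimcc} from Chekuri and Chuzhoy without reproving it): threshold the treewidth, then either run the given dynamic program on an approximate tree-decomposition, or invoke the large-treewidth decomposition theorem together with minor-closedness and superadditivity of $\pi$ to conclude $\pi(G)\ge k+1$. Your extra step of passing to a high-treewidth connected component of each $G_i$ before summing over components is a sound refinement that the paper's short proof of \autoref{cor:miclopar1} elides, since \autoref{cor:tw} only produces vertex-disjoint subgraphs, which need not themselves be the components of their union.
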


Note that the requirements of \autoref{th:bidimcc} are satisfied by
several well-studied parameters such as feedback vertex set, vertex
cover, and more generally any packing or covering problem of models of
a fixed planar graph (as described at the beginning of the section).
By plugging the improved bounds of our partitionning result
\autoref{cor:tw} in the proof of \autoref{th:bidimcc} from \cite{chekuri2013large}, we obtain
the following result.

\begin{corollary}\label{cor:miclopar1}
  Let $\pi$ be a minor-closed parameter that is positive on all
  graphs with treewidth at least $p$, is at least the sum over the
  components of a disconnected graph, and can be computed in
  time $h(w) n^{O(1)}$ given a tree-decomposition of width $w$ of the
  graph.

  Then there is an algorithm that, given an
  $n$-vertex graph $G$ and an integer $k$, decides whether $\pi(G)
  \leq k$ in time
  \[
    \left ( 2^{O(s_{\sref{cor:tw}}(p) k\log k)} +
      h(O(s_{\sref{cor:tw}}(p) k \log k))\right ) n^{O(1)}.
  \]
\end{corollary}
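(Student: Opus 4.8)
The plan is to follow the proof of \autoref{th:bidimcc} due to Chekuri and Chuzhoy~\cite{chekuri2013large}, simply substituting our sharper partitioning result \autoref{cor:tw} for the weaker one used there. Put
\[
  w := s_{\sref{cor:tw}}(p)\cdot (k+1)\log(k+2),
\]
so that $w = O(s_{\sref{cor:tw}}(p)\, k\log k)$. Given $G$ and $k$, the algorithm first runs a standard single-exponential-time constant-factor treewidth approximation which, in time $2^{O(w)}\cdot n$, either outputs a tree-decomposition of $G$ of width at most $5w+4$, or correctly reports that $\tw(G) > w$.

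Suppose first the approximation reports $\tw(G) > w = s_{\sref{cor:tw}}(p)\cdot(k+1)\log(k+2)$. Then \autoref{cor:tw}, applied with $r = p$ and with $k+1$ playing the role of $k$, guarantees $k+1$ pairwise vertex-disjoint subgraphs of $G$, each of treewidth at least $p$; within each of them I would pick a connected component $C_i$ of treewidth at least $p$ (the treewidth of a graph being the maximum of the treewidths of its components). The $C_i$ are connected, pairwise vertex-disjoint, and each satisfies $\pi(C_i)\ge 1$ since $\pi$ is positive on graphs of treewidth at least $p$. Their disjoint union $C_1\cup\cdots\cup C_{k+1}$ is a subgraph, hence a minor, of $G$, its connected components are exactly $C_1,\dots,C_{k+1}$, and $\pi$ is at least the sum of its values over the components of a disconnected graph; combining this with the fact that $\pi$ is minor-closed gives $\pi(G)\ge \pi(C_1\cup\cdots\cup C_{k+1})\ge \sum_{i=1}^{k+1}\pi(C_i)\ge k+1 > k$, so the algorithm answers that $\pi(G) > k$. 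Otherwise the approximation returns a tree-decomposition of width at most $5w+4 = O(s_{\sref{cor:tw}}(p)\, k\log k)$, and the algorithm runs the assumed dynamic program to compute $\pi(G)$ exactly in time $h(O(s_{\sref{cor:tw}}(p)\, k\log k))\cdot n^{O(1)}$, then compares the value with $k$.

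Correctness is immediate from this case analysis, and the total running time is $2^{O(w)}\cdot n + h(O(w))\cdot n^{O(1)} = \bigl(2^{O(s_{\sref{cor:tw}}(p) k\log k)} + h(O(s_{\sref{cor:tw}}(p) k\log k))\bigr) n^{O(1)}$, as claimed. I do not expect a real obstacle: the content of the statement is precisely that the improvement from \autoref{cor:tw} propagates through verbatim. The only points requiring a little care are (i) invoking \autoref{cor:tw} with parameter $k+1$ rather than $k$, so that large treewidth forces $\pi(G)\ge k+1$ strictly above $k$, and passing to connected components so that the superadditivity-over-components hypothesis applies cleanly; and (ii) noting that, unlike in \autoref{th:bidimcc}, the dependence on $p$ cannot be made explicitly polynomial here, since $s_{\sref{cor:tw}}$ is not known to be polynomial, and so remains hidden inside $s_{\sref{cor:tw}}(p)$.
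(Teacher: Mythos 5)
Your proof is correct and follows essentially the same route as the paper: set $k' = s_{\sref{cor:tw}}(p)(k+1)\log(k+2)$, run a single-exponential constant-factor treewidth approximation, invoke \autoref{cor:tw} with $k+1$ to conclude $\pi(G)>k$ in the high-treewidth branch, and otherwise run the dynamic program on the returned decomposition. Your extra step of passing to a high-treewidth connected component inside each $G_i$ before summing over components is a small but valid refinement of a point the paper leaves implicit.
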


Observe that \autoref{cor:miclopar1} improves the dependence on $k$ of the algorithm from \autoref{th:bidimcc}, at the cost of a worse
dependence on~$p$. However, in the natural
setting where $\pi$ is fixed and we want to check $\pi(G) \leq k$ for
various pairs $(G,k)$, $p$ is a constant so its contribution is less relevant.
As noted in \cite{chekuri2013large}, the
requirements on $\pi$ can also be stated as follows.

\begin{corollary}\label{cor:miclopar2}
  Let $\pi$ be a minor-closed parameter that is positive on some
  $t$-vertex planar graph $H$, is at least the sum over the
  components of a disconnected graph, and can be computed in
  time $h(w) n^{O(1)}$ given a tree-decomposition of width $w$ of the
  graph.

  Then there is a function $s_{\sref{cor:miclopar2}}$ and an algorithm
  that, given an $n$-vertex graph $G$ and an integer $k$, decides
  whether $\pi(G) \leq k$ in time
  \[
    \left ( 2^{s_{\sref{cor:miclopar2}}(t) k \log k} + h(s_{\sref{cor:miclopar2}}(t) k \log k)\right ) n^{O(1)}.
  \]
\end{corollary}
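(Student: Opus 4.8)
The plan is to deduce \autoref{cor:miclopar2} from \autoref{cor:miclopar1} by upgrading the hypothesis ``$\pi$ is positive on some $t$-vertex planar graph'' to ``$\pi$ is positive on all graphs of treewidth at least $p$'' for a suitable $p$ depending only on $t$. The bridge is the Grid Minor Theorem of Robertson and Seymour, which (as already recalled in the introduction) gives, for every planar graph $H'$, an integer $w(H')$ such that every graph of treewidth at least $w(H')$ contains $H'$ as a minor.

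First I would fix $t$ and set
\[
  p(t) \;:=\; \max\{\, w(H') \;:\; H' \text{ is a planar graph on at most } t \text{ vertices} \,\},
\]
which is well defined because there are only finitely many such graphs up to isomorphism. Now let $H$ be the $t$-vertex planar graph witnessing the positivity of $\pi$, so $\pi(H) \geq 1$, and let $G$ be any graph with $\tw(G) \geq p(t) \geq w(H)$. By the choice of $w(H)$, the graph $G$ has $H$ as a minor, and since $\pi$ is minor-closed, $\pi(G) \geq \pi(H) \geq 1 > 0$. Hence $\pi$ is positive on every graph of treewidth at least $p(t)$, and it still satisfies the other two hypotheses of \autoref{cor:miclopar1} (superadditivity over components, and computability in time $h(w) n^{O(1)}$ from a width-$w$ tree-decomposition).

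Next I would apply \autoref{cor:miclopar1} with $p := p(t)$. This yields an algorithm that decides $\pi(G) \leq k$ in time
\[
  \left( 2^{O(s_{\sref{cor:tw}}(p(t))\, k \log k)} + h\bigl( O(s_{\sref{cor:tw}}(p(t))\, k \log k) \bigr) \right) n^{O(1)}.
\]
Assuming without loss of generality that $h$ is nondecreasing (replace it by $w \mapsto \max_{w' \leq w} h(w')$, which changes nothing but the hidden constants), it then suffices to let $s_{\sref{cor:miclopar2}}(t)$ be a sufficiently large multiple of $s_{\sref{cor:tw}}(p(t))$ to absorb both occurrences of $O(\cdot)$ and obtain the stated running time $\bigl( 2^{s_{\sref{cor:miclopar2}}(t)\, k \log k} + h(s_{\sref{cor:miclopar2}}(t)\, k \log k) \bigr) n^{O(1)}$.

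There is no real obstacle here: the argument is a short reduction whose only genuine ingredient is the qualitative form of the Grid Minor Theorem together with the finiteness of the set of $t$-vertex planar graphs; everything else is bookkeeping. The one thing to keep in mind is that the resulting $s_{\sref{cor:miclopar2}}$ inherits the non-polynomial (and, via $s_{\sref{cor:tw}}$, not known to be computable) dependence on its argument, consistent with the remarks following \autoref{cor:tw} and \autoref{thm:main}.
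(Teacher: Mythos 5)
Your proposal is correct and takes essentially the same route as the paper: both reduce to \autoref{cor:miclopar1} by using the Grid Minor Theorem (\autoref{th:gridminor}) to pass from ``$\pi$ positive on some $t$-vertex planar graph'' to ``$\pi$ positive on all graphs of treewidth at least $p$'' and then set $p = f_{\sref{th:gridminor}}(t)$. The only cosmetic difference is that you rebuild the uniform treewidth threshold by taking a maximum of per-graph constants over the finitely many $t$-vertex planar graphs, whereas the paper simply invokes $f_{\sref{th:gridminor}}(t)$, which is already stated in the uniform form; otherwise the argument is identical.
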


\subsection*{\texorpdfstring{\ep{}}{Erdös-Pósa} property in minor-closed classes}
For a graph $H$ and a class $\mathcal{G}$ of graphs, we say that the \ep{} property holds for $H$-models \emph{in} $\mathcal{G}$ if there exists a bounding function $f \colon \N \to \mathbb{R}$ such that $\tau_H (G) \leq f(\nu_H(G))$ holds for every graph $G \in \mathcal{G}$.
Restricting the class $\mathcal{G}$ sometimes yields improved bounding functions.
For instance, while the bounding function in the classic \ep{} theorem is $\Theta(k \log k)$, it can be improved to $O(k)$ when restricted to planar graphs~\cite{BIENSTOCK1992163}.
In fact, this is true more generally for $H$-models for any fixed planar graph $H$ when restricted to any proper minor-closed class $\mathcal{G}$, as shown by Fomin, Saurabh, and Thilikos \cite{FominST11stre}.

\begin{theorem}[Fomin, Saurabh, and Thilikos \cite{FominST11stre}]\label{thm:linear}
  Let $\mathcal{G}$ be a proper minor-closed graph class and let $H$ be a planar graph.
  Then there exists a constant $c:=c(\mathcal{G}, H)$ such that the \ep{} property holds for $H$-models in $\mathcal{G}$ with bounding function $f(k)=ck$.
\end{theorem}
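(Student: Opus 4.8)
The plan is to reduce to the case where $\mathcal{G}$ is the class of all $F$-minor-free graphs for a single fixed graph $F$: since $\mathcal{G}$ is proper and minor-closed it excludes some graph $F$, hence is contained in this larger class, and proving the bound there suffices. As $H$ is planar, fix an integer $r=r(H)$ such that $H$ is a minor of the $(r\times r)$-grid. For the input graph $G$ put $k=\nu_H(G)$; the goal is $\tau_H(G)=O(k)$, with the constant depending only on $F$ and $H$. For simplicity assume $H$ is connected (the general case reduces to this by a standard argument).

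The feature of $F$-minor-free graphs I would exploit is a \emph{bidimensionality estimate}. By the linear excluded-grid theorem for $F$-minor-free graphs (Demaine and Hajiaghayi) there is a constant $\alpha=\alpha(F)$ such that every $F$-minor-free graph of treewidth at least $\alpha m$ has the $(m\times m)$-grid as a minor. Applied with $m$ of order $r\sqrt{\ell+1}$, this grid contains $\ell+1$ vertex-disjoint $(r\times r)$-grids, hence $\ell+1$ pairwise disjoint $H$-models, and these lift through the minor map (grid vertices are connected, pairwise disjoint branch sets, with consecutive branch sets adjacent) to $\ell+1$ disjoint $H$-models of the host graph. Contrapositively, every $F$-minor-free subgraph $G'$ with $\nu_H(G')\le\ell$ has $\tw(G')=O(\sqrt\ell)$, so there is a constant $\theta=\theta(F,H)$ with the property that any subgraph of treewidth exceeding $\theta$ contains $\Omega(\theta^2)$ pairwise disjoint $H$-models. (This is precisely the phenomenon that fails for general graphs — large treewidth with no $H$-model, realised by high-girth expanders — and is what allows the $\log$ factor of the general bounding function to be removed here.)

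I would combine this with a routine bounded-treewidth base case: for every fixed $w$, $\tau_H(G')\le(w+1)\,\nu_H(G')$ whenever $\tw(G')\le w$. This follows from a bottom-up sweep of a width-$w$ tree-decomposition — repeatedly delete the bag $B_t$ of a $\preceq$-minimal node $t$ for which $G'_t$ contains an $H$-model and then discard the subtree below $t$; since $t$ is minimal and $H$ is connected every $H$-model of $G'_t$ meets $B_t$, and since deleting $B_t$ separates $G'_t$ from the rest of the graph the $H$-models charged over successive rounds are pairwise disjoint, so there are at most $\nu_H(G')$ rounds. The bridging step, which is the technical heart of the argument, is a \emph{linear treewidth modulator}: there are constants $c_1,c_2$ depending only on $F,H$ and a set $S\subseteq V(G)$ with $|S|\le c_1\max(k,1)$ and $\tw(G-S)\le c_2$. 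Granting this and taking $w=c_2$,
\[
\tau_H(G)\ \le\ |S|+\tau_H(G-S)\ \le\ c_1\max(k,1)+(c_2+1)\,\nu_H(G-S)\ \le\ \bigl(c_1+c_2+1\bigr)\max(k,1),
\]
using $\nu_H(G-S)\le\nu_H(G)=k$; this is $O(k)$ for $k\ge1$ and $0$ for $k=0$, proving the theorem.

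It remains to establish the modulator lemma, which is where I expect essentially all the difficulty to lie. A graph of treewidth $t$ has a balanced separator of size at most $t+1$, so by the bidimensionality estimate every $F$-minor-free piece of treewidth exceeding $c_2$ admits a balanced separator of size $O(\sqrt{\nu_H(\cdot)})$; the natural attempt is to repeatedly cut each current piece of treewidth $>c_2$ along such a separator, recurse into the two sides, and stop on pieces of treewidth $\le c_2$, taking $S$ to be the union of the retained separators — which makes $\tw(G-S)\le c_2$ by construction. The obstacle is that a separator balancing vertex counts need not balance the $H$-packing number, so this recursion may peel off only $\Omega(c_2^2)$ units of packing per step while paying a separator of size $O(\sqrt k)$ each time, yielding only a weak, superlinear bound (of order $k^{3/2}$). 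The main thing to get right is therefore to choose the separators so that the recursion is balanced with respect to the $H$-packing number itself — for instance, working inside a tree-decomposition of width $O(\sqrt k)$ and selecting a bag that splits a fixed global $H$-packing roughly evenly — so that the retained separators telescope against that single packing and the total is genuinely $O(\nu_H(G))$. Carrying out this accounting is the substance of the theorem; the surrounding steps are bookkeeping.
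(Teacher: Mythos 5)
Your overall architecture---reduce to an $F$-minor-free host class, prove a linear ``treewidth modulator'' lemma, and finish via a bounded-treewidth base case---is a valid skeleton, and it is genuinely different from the paper's route. The paper instead deduces \autoref{thm:linear} directly from \autoref{thm:main_technical}: in a minimal counterexample $G\in\mathcal{G}$, taking $p=|F|$ for any $F\notin\mathcal{G}$ kills outcome~\ref{e:shallow-clique} outright (a $K_p$-model is impossible in an $F$-minor-free graph, and note that the statement of \ref{e:shallow-clique} already gives a $K_p$-minor, not just an $H$-minor), and outcome~\ref{e:sep} is excluded by the Fiorini--Joret--Wood reduction (\autoref{thm:fjw13}) and minimality; the only surviving outcome is a constant-size $H$-model, which one deletes and iterates, giving $\tau_H(G)\le\sigma\nu_H(G)$ immediately. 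That argument is about five lines once \autoref{thm:main_technical} is in hand; yours would be self-contained and closer to the original Fomin--Saurabh--Thilikos proof, but it requires substantially more machinery.

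The problem is that your proof is not finished, and the unfinished part is the whole theorem. The modulator lemma (``there is $S$ with $|S|\le c_1\max(k,1)$ and $\tw(G-S)\le c_2$'') is equivalent, up to constants, to the statement being proved: any $X$ with $G-X$ $H$-minor free and $|X|=O(k)$ is itself such a modulator (by the grid theorem $\tw(G-X)$ is bounded), and conversely your calculation shows a modulator yields the bound. So reducing to the modulator lemma is reformulation, not progress. You acknowledge this, but the repair you then gesture at does not obviously work: even if you balance the separator with respect to the packing number rather than the vertex count, the recursion $T(k)=O(\sqrt{k})+T(k_1)+T(k_2)$ with $k_1+k_2\le k$ and $k_i\le 2k/3$ is superlinear---at depth $d$ there can be $2^d$ pieces whose packing sizes sum to at most $k$, and by Cauchy--Schwarz the level-$d$ cost is up to $2^{d/2}\sqrt{k}$, which summed over $O(\log k)$ levels gives $k^{1+\Omega(1)}$, not $O(k)$. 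Getting from $O(\sqrt{k})$-width decompositions to a \emph{linear} modulator needs a genuinely new idea (the original paper of Fomin, Saurabh, and Thilikos uses protrusion-replacement machinery, not balanced separators), and that idea is absent here. As submitted, the proposal identifies the right target but does not prove it.
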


As it turns out this theorem also follows directly from our main technical theorem (stated in the next section).

\subsection*{Packing cycles with modularity constraints}

In 1988, Thomassen obtained the following modularity-constrained variant
of the \ep{} theorem:
\begin{theorem}[Thomassen \cite{Thomassen1988}]\label{thm:paritep} For
every $m \in \N$ there is a function $f$ such that, for every $k \in
\N$ and every graph $G$, either $G$ contains $k$ vertex-disjoint
cycles of length 0 modulo $m$, or there is a subset $X$ of at most
$f(k)$ vertices such that $G-X$ has no such cycle.
\end{theorem}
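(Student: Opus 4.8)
The plan is a standard treewidth dichotomy. Fix $m\in\N$, and write $\nu_m(G)$ for the maximum number of pairwise vertex-disjoint cycles of $G$ of length divisible by $m$, and $\tau_m(G)$ for the minimum size of a vertex set meeting all such cycles; it suffices to bound $\tau_m$ by a function of $\nu_m$, since then for any $k$ either $\nu_m(G)\ge k$ (the first alternative) or $\nu_m(G)\le k-1$ and a minimum hitting set is small (the second). The one non-elementary input I would isolate is that there is a constant $r_0=r_0(m)$ such that every graph of treewidth at least $r_0$ contains a cycle of length divisible by $m$. Granting this, suppose $\nu_m(G)\le k-1$; by \autoref{cor:tw} applied with $r=r_0$, if $\tw(G)\ge s_{\sref{cor:tw}}(r_0)\cdot k\log(k+1)$ then $G$ has $k$ vertex-disjoint subgraphs of treewidth at least $r_0$, hence $k$ vertex-disjoint cycles of length divisible by $m$, a contradiction. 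Therefore $\tw(G)<s_{\sref{cor:tw}}(r_0)\cdot k\log(k+1)$.

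It remains to bound $\tau_m$ on graphs of bounded treewidth. For this I would prove the clean inequality $\tau_m(G)\le(\tw(G)+1)\cdot\nu_m(G)$, valid for every graph $G$, by induction on $\nu_m(G)$; the argument in fact works for any family of connected subgraphs. If $\nu_m(G)=0$ the empty set works. Otherwise root a tree-decomposition of $G$ of width $\tw(G)$, and for a node $t$ let $G_t$ denote the subgraph induced by the union of the bags in the subtree rooted at $t$. Pick a node $t$ of maximum depth such that $G_t$ contains a cycle of length divisible by $m$ (one exists since $G$ does). As $B_t$ separates $V(G_t)\setminus B_t$ from $V(G)\setminus V(G_t)$ and from the distinct subtrees hanging below $t$, maximality of the depth forces every cycle of length divisible by $m$ contained in $G_t$ to meet $B_t$, while any such cycle of $G$ that leaves $G_t$ but uses a vertex of $V(G_t)\setminus B_t$ must cross $B_t$; so $B_t$ meets every cycle of $G$ of length divisible by $m$, except possibly those contained in $G-V(G_t)$. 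Since the deleted part contained one such cycle, $\nu_m(G-V(G_t))\le\nu_m(G)-1$, and $G-V(G_t)$ inherits a tree-decomposition of width at most $\tw(G)$; by induction it has a set of at most $(\tw(G)+1)(\nu_m(G)-1)$ vertices meeting all its cycles of length divisible by $m$, and adding $B_t$ closes the induction. Combined with the first paragraph, $\nu_m(G)\le k-1$ gives $\tau_m(G)\le(\tw(G)+1)(k-1)=O(k^2\log k)$, where the implied constant depends on $m$; Thomassen's statement only requires that some bounding function exist, so this (non-optimal) bound suffices.

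The place where real work is needed is the isolated input: large treewidth forces a cycle of length divisible by $m$. By the Grid Minor Theorem, large treewidth yields a large grid minor, and hence, the branch sets being connected, a large subdivided wall as a subgraph of $G$. What remains is purely combinatorial: a cycle of a subdivided wall is the boundary of a union $\mathcal R$ of faces, and its length is the sum, over the edges on $\partial\mathcal R$, of the lengths of the corresponding subdivided paths; one wants $\mathcal R$ for which this sum vanishes modulo $m$. Growing $\mathcal R$ one face at a time along a staircase changes this sum in controlled ways, and a short case analysis shows that the adversary cannot make every such increment vanish modulo $m$ without some face of the wall already being a cycle of length divisible by $m$; iterating and pigeonholing over the many reachable regions then yields the desired cycle once the wall is large enough in terms of $m$. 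I expect this combinatorial lemma — essentially the core of Thomassen's original argument, and quotable from it — to be the main obstacle; everything else is the routine dichotomy described above.
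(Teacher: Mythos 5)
Your proof is correct (modulo the combinatorial lemma you explicitly propose to quote from Thomassen), but it takes a genuinely different route from the paper's. The paper does not reprove Thomassen's theorem directly; it derives the stronger \autoref{cor:parity}, with bounding function $O(k\log k)$, by chaining \autoref{lem:thom}, \autoref{thm:main}, and the transfer lemma \autoref{lem:minorep_alt}: Thomassen's planar gadget $J$ gets an $O(k\log k)$ Erd\H{o}s-P\'osa bound from \autoref{thm:main}, and \autoref{lem:minorep_alt} converts this into the same bound for packing and covering cycles of length $0\bmod m$ at no loss. You instead combine the large-treewidth decomposition \autoref{cor:tw} on the high-treewidth side with a direct induction giving $\tau_m(G)\le(\tw(G)+1)\cdot\nu_m(G)$ on the low-treewidth side; that induction is sound (the bag $B_t$ of a deepest node $t$ whose subtree graph $G_t$ contains a $0\bmod m$ cycle separates $V(G_t)\setminus B_t$ from the rest, hence hits every such cycle of $G$ not confined to $G-V(G_t)$, and $\nu_m$ drops by one after deleting $V(G_t)$). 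Your route is more elementary, avoiding the subgraph-transfer machinery, and needs only the folklore fact that bounded treewidth plus bounded $\nu_m$ forces bounded $\tau_m$; the cost is an extra factor of $k$, yielding $f(k)=O(k^2\log k)$ rather than the optimal $O(k\log k)$, which is enough for Thomassen's statement as posed but does not recover the tight \autoref{cor:parity}. Both routes isolate the same indispensable planar input, that some constant treewidth $r_0(m)$ forces a cycle of length $0\bmod m$; in the paper this is \autoref{lem:thom} combined with \autoref{th:gridminor}. One small caveat: your heuristic for that lemma (``if no increment vanishes mod $m$, pigeonhole over reachable regions gives a $0\bmod m$ cycle'') does not literally hold, since pigeonhole only produces two region boundaries agreeing modulo $m$ rather than one hitting $0$; but since you say you would quote that lemma from Thomassen, this is not a gap in the proposal.
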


Wollan~\cite{Wol11} obtained a similar statement for cycles
with non-zero length modulo $m$, when $m$ is odd.
As proved by Dejter and Neumann-Lara \cite{dejter1987unboundedness}, the same statement does not hold in general for
cycles of length $m'$ modulo $m$, when $m'\in [m-1]$. Thomassen's
upper-bound $f(k) =2^{2^{O(k)}}$ (for fixed $m$) has later been improved to $f(k) =
O(k \log^d k)$ for some $d$ by Chekuri and Chuzhoy~\cite{chekuri2013large}, who used a partitioning theorem
similar to our \autoref{cor:tw}.
As a consequence of our main theorem, we obtain a $O(k \log k)$ bounding function for cycles of length 0 modulo $m$, which is
the same as in the original \ep{} Theorem.

\begin{corollary}\label{cor:parity} For every positive integer $m$
there is a constant $c:=c(m)$ such that, for every
$k \in \N$ and every graph $G$, either $G$ contains $k$
vertex-disjoint cycles of length 0 modulo $m$, or there is a subset
$X$ of at most $c\cdot k \log(k+1)$ vertices such that $G-X$ has no
such cycle.
\end{corollary}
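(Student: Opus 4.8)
The plan is to deduce \autoref{cor:parity} from \autoref{thm:main} by encoding ``cycle of length $0 \bmod m$'' as an $H$-model in a suitable auxiliary graph, following the standard reduction used for such modularity-constrained packing problems. Fix $m$. Given a graph $G$, I would build an auxiliary graph $G'$ on vertex set $V(G) \times \mathbb{Z}_m$ (together with, for each original vertex, a few bookkeeping gadgets), where a walk in $G'$ that increments the $\mathbb{Z}_m$-coordinate by $1$ at each step corresponds to a walk in $G$, so that a closed walk of length $0 \bmod m$ through a vertex $v$ of $G$ lifts to a cycle in $G'$ passing through a designated gadget at $v$. The point is to choose $G'$ so that the subgraphs of $G'$ containing an $H$-minor, for an appropriate \emph{fixed planar} graph $H$ (something like a short cycle, or a small ``theta''-type graph forced to use the right number of edges), are in correspondence with subgraphs of $G$ containing a cycle of length $0 \bmod m$; this is exactly the kind of construction already worked out in the literature (Thomassen, and in the $O(k\log^d k)$ form by Chekuri and Chuzhoy). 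One then applies \autoref{thm:main} to $(G', H)$ with $c = c(H) = c(m)$.

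The key steps, in order, are: (1) define the layered auxiliary graph $G'$ precisely, with the $\mathbb{Z}_m$ layers and the per-vertex gadget that a cycle must traverse exactly once to ``count'' as a mod-$m$ cycle through that vertex; (2) identify the fixed planar graph $H$ whose models in $G'$ are exactly the lifts of cycles of length $0 \bmod m$ in $G$ — here a single cycle of some fixed length suffices if the gadget is designed so that every $H$-model must wind once around all $m$ layers; (3) prove the two-way correspondence: $k$ disjoint $H$-models in $G'$ yield $k$ vertex-disjoint $0 \bmod m$ cycles in $G$ (project back to $G$, using that disjointness in $G'$ forces vertex-disjointness of the underlying closed walks in $G$, and shortcut chords to get genuine cycles), and a hitting set $X'$ for $H$-models in $G'$ of size $c \cdot k \log(k+1)$ yields a hitting set $X \subseteq V(G)$ for $0 \bmod m$ cycles in $G$ of size $O(|X'|)$ by taking $X = \{\, v \in V(G) : \text{some copy/gadget-vertex of } v \text{ lies in } X'\,\}$, possibly with a bounded blow-up factor depending only on $m$; (4) combine: set $\nu := \nu_H(G') \geq$ (number of disjoint $0 \bmod m$ cycles in $G$) and apply \autoref{thm:main} to finish, absorbing the constant-factor losses into $c(m)$.

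The main obstacle is step (3), specifically making the correspondence \emph{exact in both directions simultaneously}: the gadget at each vertex must be rigid enough that every $H$-model in $G'$ genuinely projects to a cycle (not just a closed walk) of the correct residue in $G$ and that no ``spurious'' $H$-models exist using only the gadget machinery, while also being flexible enough that every $0 \bmod m$ cycle in $G$ does lift. The subtle point is that a minimum-weight closed walk of length $0 \bmod m$ through a vertex need not be a cycle in $G$, so one has to argue — as in the classical proofs — that it can be decomposed or shortcut into an actual cycle of length $0 \bmod m$, or instead set up $H$ and the gadgets so that the $H$-models in $G'$ correspond to a class of closed walks that is provably realizable by cycles. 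I expect this bookkeeping to mirror the argument already present in \cite{chekuri2013large} essentially verbatim, with \autoref{thm:main} plugged in where their partitioning/Erd\H{o}s-P\'osa step was used, so the only real work is checking that the $\log k$ (rather than $\log^d k$) bound propagates through the reduction, which it does since the reduction changes the parameters $k$ and $\nu_H$ only by constant factors depending on $m$.
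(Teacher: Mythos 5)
Your proposed route is genuinely different from the paper's, and it has a real gap. The paper does not build any auxiliary layered graph: instead it proves the stronger \autoref{thm:subcubopt} about $(0 \bmod m)$-subdivisions of planar subcubic graphs, using Thomassen's Proposition~3.2 (quoted as \autoref{lem:thom}) which supplies a \emph{fixed} planar graph $J = J(H,m)$ with the property that the graph of any $J$-model in any graph $G$ already contains a $(0\bmod m)$-subdivision of $H$ as a subgraph, and then transfers the $O(k\log k)$ bounding function for $J$-models (from \autoref{thm:main}) to the desired cycle class via the transfer lemma \autoref{lem:minorep_alt} from~\cite{ErdosPosaWheels}. \autoref{cor:parity} is then the special case $H=$ (a vertex with a loop, or a triangle; see the footnote). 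Everything happens inside $G$ itself, which neatly sidesteps all the bookkeeping you are trying to carry out. You also misattribute the layered-graph idea to Chekuri--Chuzhoy: their argument likewise goes through Thomassen's subdivision lemma plus a partitioning theorem, not a $\mathbb{Z}_m$-lift of $G$.

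The concrete gaps in your step~(3) are these. First, disjoint $H$-models in the $\mathbb{Z}_m$-lift $G'$ need not project to vertex-disjoint objects in $G$: two disjoint cycles in $G'$ can use the distinct layer-copies $(v,i)$ and $(v,j)$ of the same base vertex $v$. Your per-vertex ``bottleneck gadget'' is supposed to forbid this, but forcing every $H$-model through the bottleneck while not creating spurious models is exactly the hard part, and you have not specified a gadget that does it. Second, in an undirected $\mathbb{Z}_m$-lift with all voltages $+1$, a cycle of $G'$ projects to a closed walk of $G$ whose \emph{signed} voltage sum is $0 \bmod m$, not whose \emph{length} is $0 \bmod m$ (forward and backward traversals cancel), so you do not even recover the correct residue without further work. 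Third, even granting a closed walk of length $0 \bmod m$, such a walk need not contain a cycle of length $0 \bmod m$: consider winding twice around a triangle, which gives a closed walk of length $6$ with no cycle of length $0 \bmod 6$. Repairing all three of these simultaneously by gadget design is precisely what Thomassen's lemma accomplishes by other means, and it is a nontrivial ingredient, not bookkeeping; simply citing Chekuri--Chuzhoy does not supply it since their reduction is not the one you describe.
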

Extremal graphs showing that this bound is tight (up to the
value of $c$) can be obtained from extremal graphs for the original
\ep{} Theorem by subdividing every edge $m-1$ times. We actually
prove a stronger statement about modularity-constrained subdivisions
of planar subcubic graphs, whose proof we postpone to~\autoref{sec:procor}.

\section{Overview of the proof}\label{sec:ovww}
In this paper, all logarithms are binary.  Unless otherwise specified, 
the graphs we consider are finite, simple, and undirected. 
In particular, when contracting edges of a graph, we subsequently delete resulting loops and parallel edges. Let $G$ be a graph.
We use $|G|$ and $\Vert G \Vert$ as shorthand for $|V(G)|$ and $|E(G)|$, respectively.

A \emph{separation} of a graph $G$ is a pair $(A,B)$ of subsets of
$V(G)$ such that $A\cup B = V(G)$ and $G$ has no edge from $A  \setminus  
B$ to $B  \setminus   A$.
Observe that our definition allows $A$ or $B$ to be empty.
The {\em order} of the separation is $|A\cap B|$.

The heart of our proof is the following technical theorem.

\begin{theorem}[Main technical theorem]
\label{thm:main_technical}
  For every $p \in \N$, every planar graph $H$, and every non-decreasing function $g$ with
  $g(0)=1$, there is a constant $\sigma \in \N$ such that for every graph $G$, at least one of the following holds.
  \begin{enumerate}[label = (\roman*)]
  \item \label{e:smallmodel} $G$ contains an $H$-model of size at most $\sigma$;
  \item \label{e:shallow-clique} $G$ contains a $K_p$-model of size at most $\sigma \log |G|$;
  \item \label{e:sep} $G$ has a separation $(A,B)$ of order at most
    $\sigma$
    such that $G[A]$ does not contain $H$ as a minor and $|A| \geq
    g(|A\cap B|)$.
  \end{enumerate}
\end{theorem}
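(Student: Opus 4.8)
The plan is to run a "win/win" argument driven by the Grid Minor Theorem and an induction on $|G|$. Fix $p$, $H$ planar, and the non-decreasing function $g$ with $g(0)=1$; let $r$ be large enough that the $r\times r$ grid contains an $H$-model (so $r$ depends only on $H$), and choose the treewidth threshold $w=w(p,r)$ from the Polynomial Grid Minor Theorem so that treewidth $\ge w$ forces either a $K_p$-minor or an $r\times r$ grid minor. We aim to prove the statement with $\sigma$ a suitable function of $p$, $H$, and $g$, proceeding by induction on $|G|$; if $|G|$ is bounded in terms of these parameters then one of \ref{e:smallmodel}--\ref{e:sep} holds trivially (e.g. \ref{e:sep} with $A=\emptyset$ when $G$ itself has no $H$-minor, or \ref{e:smallmodel} otherwise), so we may assume $|G|$ is large.

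First I would split on the treewidth of $G$. \emph{Case 1: $\tw(G)\ge w\cdot$(something logarithmic).} Here I want outcome \ref{e:shallow-clique} or \ref{e:smallmodel}. The subtlety is the size bounds: \ref{e:smallmodel} asks for an $H$-model of size $O(1)$ and \ref{e:shallow-clique} for a $K_p$-model of size $O(\log|G|)$, whereas a generic grid or clique minor produced by the Grid Minor Theorem has no size control. The standard fix is to apply the Grid Minor Theorem not to $G$ but to a small piece: either $G$ has a subgraph on $O(\log|G|)$ vertices with treewidth $\ge w$ — giving a $K_p$- or $r\times r$-grid minor of size $O(\log|G|)$, hence \ref{e:shallow-clique} or (since the grid has $O(1)$ size once we pass to an $r\times r$ subgrid) \ref{e:smallmodel} — or no such subgraph exists, which by a counting/ball-growing argument forces $\tw(G)$ to be much smaller than we assumed, a contradiction. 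This "local treewidth extraction" step is what produces the extra $\log$ factor and is, I expect, the main obstacle: one must carefully balance how large a low-diameter ball one can afford against the treewidth it must carry.

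\emph{Case 2: $\tw(G)$ is small (below the threshold above).} Then $G$ has a tree-decomposition of bounded width. If $G$ has no $H$-minor, output \ref{e:sep} with $(A,B)=(V(G),\emptyset)$, which is legitimate since $|A\cap B|=0$ and $g(0)=1\le |A|$ as $|G|$ is large. Otherwise, I would find a bag $X$ (of size $\le \sigma$, the width bound) whose removal splits $G$ into pieces and pick a branch of the tree-decomposition to obtain a separation $(A,B)$ of order $\le\sigma$ with $G[A]$ $H$-minor free. To get the lower bound $|A|\ge g(|A\cap B|)$ one uses a careful choice of the separator: walk down the tree-decomposition maintaining the invariant that $G[A]$ is $H$-minor free, stopping at the last moment it still is; then either $A$ is already large enough, or $A$ is small — but in the latter case $G[A\cup (\text{one more bag})]$ does have an $H$-minor on few vertices, giving \ref{e:smallmodel}. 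Formally this is an exchange argument, and the only real work is matching the size threshold to $g$.

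Finally, I would assemble the cases: in every branch we land in \ref{e:smallmodel}, \ref{e:shallow-clique}, or \ref{e:sep}, and $\sigma$ can be read off as the maximum of the finitely many bounds collected along the way (a grid-size bound depending on $H$, a width bound depending on $p$ and $r$, and a threshold depending on $g$). The induction on $|G|$ is, in fact, not strictly needed if Case 2 always terminates with an explicit separation; the delicate point throughout is the logarithmic size control in Case 1, and I would isolate it as a preliminary lemma ("a large-treewidth graph contains a bounded-diameter, hence $O(\log n)$-size, subgraph of large treewidth, or else has small treewidth") before starting the main argument.
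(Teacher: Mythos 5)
Your proposal hinges on a ``local treewidth extraction'' lemma: \emph{if $\tw(G)$ is large and no subgraph of $G$ on $O(\log|G|)$ vertices has treewidth $\ge w$, then $\tw(G)$ must be small after all}. This claim is false, and it is exactly the obstacle that the paper's machinery of orchards is built to circumvent. Concretely, take a cubic expander $H_0$ on $m$ vertices (treewidth $\Omega(m)$, girth $\Theta(\log m)$) and subdivide every edge $\ell$ times, for $\ell$ a large constant. The resulting graph $G$ has $n=\Theta(m\ell)$ vertices, still treewidth $\Omega(m)=\Omega(n)$, but girth $\Theta(\ell\log m)=\Theta(\ell\log n)$. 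For $\ell$ larger than the implied constant in your $O(\log n)$, every subgraph of $G$ on $O(\log n)$ vertices is acyclic and hence has treewidth $\le 1$. So high treewidth gives you neither a small high-treewidth subgraph nor a small ball certifying low treewidth, and the ``counting/ball-growing'' step has nothing to latch onto: balls of radius $\ell/2$ in $G$ are trees, yet $\tw(G)$ is linear. Note that this example really is a valid instance of the theorem (outcome \ref{e:sep} holds easily), but your Case~1 is supposed to deliver \ref{e:smallmodel} or \ref{e:shallow-clique} whenever treewidth is large, and here it cannot.

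There is a second, independent problem even in situations where a small high-treewidth subgraph $G'$ does exist. Applying the Grid Minor Theorem inside $G'$ gives an $r\times r$ grid \emph{model} whose branch sets live in $G'$; their union can be all of $G'$, i.e., $\Theta(\log n)$ vertices. Passing to an $r'\times r'$ subgrid does not shrink this to $O(1)$ because you have no control over how the $\Theta(\log n)$ vertices are distributed among branch sets, so you do not obtain an $H$-model of constant size and hence not outcome \ref{e:smallmodel}. The paper confronts this head-on: its Lemma~6.2 (bounded-size models inside optimal orchard packings) uses the maximality of the packing to cap path lengths, which is what ultimately bounds the model size; there is no analogous size control in your argument.

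This is not merely a technical slip but a different (and, as written, unworkable) route. The paper's proof never takes a treewidth dichotomy on $G$ itself. Instead it packs orchards, aggregates the graph into ``pieces'' of bounded size, and builds an auxiliary graph $\Gsmall$ on the pieces. The point is that high \emph{average degree} of $\Gsmall$ --- not high treewidth of $G$ --- is what yields a $K_p$-model on $O(\log|\Gsmall|)$ vertices via \autoref{thm:small-minors}, and that translates back to a $K_p$-model of size $O(\log|G|)$ in $G$ because each piece has bounded size. Expanders of high girth show that you cannot replace ``high average degree'' by ``high treewidth'' in a lemma of this type. Your Case~2 (bounded treewidth, walk down the tree-decomposition) is plausible and roughly matches the spirit of the separation extraction, but the theorem's main content is Case~1, and that is where the proposal breaks down.
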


\autoref{thm:main} follows quickly from \autoref{thm:main_technical} using previous results.
We give the derivation in \autoref{sec:tighep}. Thus, it only remains to prove \autoref{thm:main_technical}.

To give a high-level idea of our proof strategy for \autoref{thm:main_technical}, we sketch it for the case $H = K_3$.
Note that every cycle in our graph $G$ is a $K_3$-model.
First, we consider a maximum-size collection $\mathcal{P}$ of paths of length $\omega$, for some large enough constant $\omega$.
Assume for simplicity that these paths cover all vertices of $G$.
If one of the paths in $\mathcal{P}$ is not induced, we find a cycle of length at most $\omega$.
Similarly, if two of these paths are connected by at least two edges, we get a cycle of length at most $2\omega$.
In both cases, we find a $K_3$-model of size at most $2\omega \leq \sigma$ for a suitable choice of the constant $\sigma$, and \ref{e:smallmodel} is satisfied.
Thus we may assume this does not happen.

Then, we consider the auxiliary graph $G'$ on vertex set $\mathcal{P}$ where two
vertices are adjacent if the corresponding paths are connected by an
edge in $G$.
If $G'$ has large enough minimum degree (as a function of $p$), then a known result (see~\autoref{thm:small-minors} in the next section) yields a $K_p$-model of size $O(\log |G'|)$ in
$G'$, which translates into a $K_p$-model of size $O(\omega \log |G|)$ in~$G$, which is outcome \ref{e:shallow-clique}.

Hence, we may assume that $G'$ has a vertex of degree bounded by some function of $p$.
Then the corresponding path $P \in \mathcal{P}$ has neighbors in only a few other paths of
$\mathcal{P}$.
By letting $A := V(P)$ and letting $B$ be the rest of the graph plus the vertices of $A$ with a neighbor in $G-A$, we obtain outcome \ref{e:sep} (assuming $\omega$ has been chosen large enough).

While the arguments leading to outcomes \ref{e:shallow-clique} and
\ref{e:sep} above work for all planar graphs $H$, this approach fails in general as the existence of many edges between two paths of
$\mathcal{P}$ does not always yield a small $H$-model (outcome \ref{e:smallmodel}).

The aforementioned result of~\cite{ErdosPosaWheels} for the case where $H$ is a wheel avoids this difficulty by packing paths and cycles instead of just paths. However, this technique breaks down when trying to pack subgraphs having a vertex of degree at least $3$.

In our proof, we addressed this difficulty by introducing a family of objects called
\emph{orchards} and considering \emph{orchard packings} as a
counterpart to the family $\mathcal{P}$ of paths/cycles.
Roughly speaking, orchards have the property that two disjoint orchards connected by
many edges either can be combined into more desirable structures (in the same
sense that two paths connected by two edges induce a cycle in the proof sketch above),
or the orchards can be separated in a `clean way' from each other using a small set of vertices.
This allows us to conclude similarly as above. However, the proof is more involved.

The rest of the paper is organized as follows. The next section contains the
general definitions and results we use. In \autoref{sec:tighep} we
prove \autoref{thm:main} assuming \autoref{thm:main_technical}.
Orchards and orchard packings are introduced in \autoref{sec:orchards}
and \autoref{sec:packing_orchards}, along with some key separation lemmas.
Using these results we finally prove
\autoref{thm:main_technical} in \autoref{sec:tech}.
The proofs of the algorithmic and combinatorial consequences of our results stated in \autoref{sec:combcor} are given in Sections \ref{sec:apalg} and \ref{sec:procor}, respectively.

\section{Preliminaries}

A \emph{tree-decomposition} of a graph  $G$ is a tree $T$ together with subsets $B_t$ of $V(G)$ for each $t \in V(T)$ satisfying

\begin{itemize}
\item $V(G)= \bigcup_{t \in V(T)} B_t$,
\item for each $uv \in E(G)$, there exists $t \in V(T)$ such that $u,v \in B_t$, and
\item for each $v \in V(G)$, the set of all $w \in V(T)$ such that $v \in B_w$ induces a subtree of $T$.
\end{itemize}
The \emph{width} of the tree-decomposition is $\max_{ t \in V(T)} \{|B_t|-1 \}$.  The \emph{treewidth} of $G$, denoted $\tw(G)$, is the minimum width taken over all tree-decompositions of $G$.

\begin{theorem}[Robertson and Seymour~\cite{robertson1986graph}]\label{th:gridminor}
There exists a function $f_{\sref{th:gridminor}} \colon \N \to \N$ such that for every $t\in \N$, every graph of treewidth at least $f_{\sref{th:gridminor}}(t)$ contains every $t$-vertex planar graph as a minor.
\end{theorem}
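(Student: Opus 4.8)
The plan is to derive this from the \emph{Excluded Grid Theorem}: there is a function $g\colon \N \to \N$ such that every graph of treewidth at least $g(r)$ contains the $r \times r$ grid as a minor. Granting this, \autoref{th:gridminor} follows by combining two reductions. First, every $t$-vertex planar graph $H$ is a minor of a bounded grid: fixing a plane embedding of $H$ and invoking a grid-drawing result (for instance de~Fraysseix--Pach--Pollack), $H$ can be drawn with its vertices at points of an $O(t) \times O(t)$ grid and its $O(t)$ edges routed along edge-disjoint grid paths after a bounded refinement, so $H$ is a topological minor — in particular a minor — of the $ct \times ct$ grid for some absolute constant $c$. Second, one sets $f_{\sref{th:gridminor}}(t) := g(ct)$; then every graph of treewidth at least $f_{\sref{th:gridminor}}(t)$ contains the $ct \times ct$ grid as a minor, hence contains $H$ as a minor since a minor of a minor is a minor.

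It thus remains to prove the Excluded Grid Theorem, which is the heart of the matter. First I would pass from treewidth to a combinatorial witness of connectivity: by the treewidth--bramble duality of Seymour and Thomas, a graph of treewidth at least $k$ carries a bramble of order greater than $k$ (equivalently, a tangle of large order), and from this one extracts a \emph{well-linked} set $Z$ of large size, meaning that no separation of small order splits $Z$ into two large parts. The core step is then to bootstrap such a highly connected set into a large grid minor: using Menger's theorem one routes many vertex-disjoint paths between designated parts of $Z$, iteratively assembles a large system of crossing paths (a "web"), and then cleans it up — deleting and rerouting to eliminate unwanted crossings and high-degree behaviour — until a subdivision of the $r \times r$ grid survives. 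Keeping enough disjoint paths alive through the successive peeling/cleaning steps is what forces the function $g$ to be astronomically large in the original Robertson--Seymour argument; sharper routing arguments (Chekuri--Chuzhoy) bring $g$ down to a polynomial, but for \autoref{th:gridminor} any finite bound suffices.

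The main obstacle is exactly this last step — extracting a grid subdivision from a well-linked set — because controlling the loss of disjoint paths across the inductive cleaning operations requires a delicate potential/induction argument, and essentially all of the depth of the theorem is concentrated there. The two reductions framing it, namely embedding a planar graph into a grid and deducing the stated form from the grid case, are by comparison routine.
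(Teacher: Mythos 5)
The paper does not prove this statement; it cites it as a black-box result from Robertson and Seymour's ``Graph Minors V'' (and notes that Chekuri--Chuzhoy later made the bound polynomial). There is therefore no ``paper's own proof'' to compare against, and a citation-level treatment is all that is expected here.

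Your sketch is a correct high-level account of the standard proof strategy, and you are right that the statement reduces to the Excluded Grid Theorem via the observation that every $t$-vertex planar graph is a minor of an $O(t)\times O(t)$ grid. Two remarks. First, the route you give to that observation --- de~Fraysseix--Pach--Pollack straight-line drawings --- needs an extra step you elide: a straight-line drawing on a grid does not immediately give internally-disjoint grid \emph{paths} for the edges, since the segments are not lattice paths and distinct segments can come arbitrarily close. The cleaner route is the standard one via orthogonal (rectilinear) grid drawings, or the direct argument in Robertson--Seymour that a planar graph on $t$ vertices is a minor of the $2t\times 2t$ grid; either way the fact is routine, but not for the reason you wrote. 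Second, you candidly flag that the entire depth of the theorem --- extracting a grid subdivision from a bramble/tangle/well-linked set by routing and cleaning up a web --- is only gestured at. That is an honest description of the literature, but it means your proposal is an outline, not a proof: the inductive cleaning argument you name is precisely the part one would have to supply, and nothing in your sketch substitutes for it. For the purposes of this paper that is fine, since the theorem is invoked as an external result, but you should not present the proposal as self-contained.
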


By the results of Chekuri and Chuzhoy~\cite{chekuri2016polynomial, chuzhoy2015excluded}, $f_{\sref{th:gridminor}}$ can be bounded from above by a polynomial function.

We do not directly use tree-decompositions in this paper. Instead, we use the following dual notion. A \textit{bramble} $\mathcal{B}$ in a graph $G$ is a collection of vertex sets of connected subgraphs of $G$, called \textit{bramble sets} of $\mathcal{B}$, such that for all $B,B'\in \mathcal{B}$, $|B \cap B'|\geq 1$ or there is an edge between $B$ and $B'$.
The \textit{order} of $\mathcal{B}$ is the minimum size of a set $W\subseteq V(G)$ such that $W$ intersects all bramble sets.

\begin{theorem}[Seymour and Thomas~\cite{seymour1993graph}]\label{th:brambleduality}
Let $k\geq 0$ be an integer. A graph has treewidth at least $k$ if and only if it contains a bramble of order at least $k+1$.
\end{theorem}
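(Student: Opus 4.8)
I would prove the two implications separately, since they are of rather different difficulty. First I would dispatch the easy direction: \emph{if $G$ has a bramble $\mathcal B$ of order at least $k+1$, then $\tw(G)\ge k$}. I would argue by contraposition, showing that every tree-decomposition $(T,\{B_t\})$ of $G$ of width at most $k-1$ has a bag $B_t$ meeting every set of $\mathcal B$, so that $\mathcal B$ has order at most $|B_t|\le k$. To locate such a bag, orient the edges of $T$ as follows. Deleting an edge $e=st$ of $T$ splits $T$ into the subtree $T_s$ containing $s$ and the subtree $T_t$ containing $t$; setting $A_e=\bigcup_{u\in T_s}B_u$ and $B_e=\bigcup_{u\in T_t}B_u$ yields a separation $(A_e,B_e)$ of $G$ with $A_e\cap B_e\subseteq B_s\cap B_t$. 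Since $A_e\cap B_e$ separates $A_e\setminus B_e$ from $B_e\setminus A_e$ and the sets of $\mathcal B$ are connected, each bramble set lies inside $A_e$ or inside $B_e$; and no two bramble sets can lie one strictly in $A_e\setminus B_e$ and the other strictly in $B_e\setminus A_e$, since they would then be disjoint and non-adjacent, contradicting the bramble axiom. So I may orient $e$ toward whichever of the two sides (if any) strictly contains a bramble set, orienting arbitrarily otherwise. A finite tree has a node $t$ into which all incident edges point; if some $B\in\mathcal B$ missed $B_t$, then $B$, being connected and avoiding the separator $B_t$, would be confined to a single component of $T-t$, hence would lie strictly on the far side of the edge of $T$ joining $t$ to that component — contradicting the orientation. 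Thus $B_t$ meets every bramble set, which finishes this direction.

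For the converse, \emph{if $\tw(G)\ge k$ then $G$ has a bramble of order at least $k+1$}, I would again argue by contraposition: assuming every bramble of $G$ has order at most $k$, I would construct a tree-decomposition of $G$ of width at most $k-1$. The construction would be recursive, driven by the invariant that for every $W\subseteq V(G)$ with $|W|\le k$ and every subgraph $G_0=G[W\cup C_1\cup\dots\cup C_j]$ obtained by keeping $W$ together with some of the components $C_i$ of $G-W$, the graph $G_0$ has a tree-decomposition of width at most $k-1$ with $W$ inside one bag; one proves this by induction on $|V(G_0)|$, the base case $G_0=G[W]$ being a single bag. In the inductive step one wants to split $G_0$ across a separation of order at most $k-1$ — either to release a vertex of $W$, when the current bag is full, or to break $G_0$ into strictly smaller closed pieces — and then recurse on the pieces and glue the resulting decompositions along a new tree edge.

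The whole weight of the argument — and the step I expect to be the main obstacle — is proving that at each stage the required separation of $G_0$, of order at most $k-1$ and properly splitting $G_0$, actually exists. This is exactly where the hypothesis is used: if no such separation existed for some pair $(W,G_0)$, then the irreducible pieces witnessing this obstruction (for each relevant small cut $W'$, a suitably and consistently chosen component of $G-W'$) would assemble into a bramble of $G$, and the absence of a small separator would force that bramble to have order at least $k+1$, contradicting the assumption. The delicate point in making this precise is verifying the touching axiom of the constructed bramble — that any two of its sets intersect or are joined by an edge — which is precisely where ``no separation of order $\le k-1$'' gets invoked; the remainder is bookkeeping (closure of the pieces under the recursion, and compatibility of the glued decompositions with the width bound). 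An essentially equivalent route is to pass through havens, deriving a haven of order $k+1$ from $\tw(G)\ge k$ by a fugitive-escape (cops-and-robber) argument and then extracting a bramble from the haven; the same touching verification reappears there through the monotonicity of the haven.
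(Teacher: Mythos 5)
The paper does not prove this theorem; it is quoted from Seymour and Thomas and used as a black box, so there is no in-paper argument to compare your attempt against. Your treatment of the easy direction (a bramble of order $\geq k+1$ forces $\tw \geq k$) is correct and is the standard argument: for each edge $e$ of the decomposition tree, at most one side of the induced separation can \emph{strictly} contain a bramble set (two sets lying strictly on opposite sides would be disjoint and non-adjacent, violating the bramble axiom), so the orientation is well-defined, and the bag at a sink node must meet every bramble set and thus has at least as many vertices as the order of the bramble.

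The converse is where the content of the theorem lives, and there your proposal stops short of a proof. You lay out the architecture of the induction but explicitly defer the two load-bearing steps: (a) that when $G_0$ admits no suitable separation of order at most $k-1$, consistently chosen components of $G-W'$ over the relevant small cuts $W'$ assemble into a bramble of $G$, and (b) that this bramble has order at least $k+1$. Verifying the touching axiom for the assembled family, and extracting the order lower bound from the non-existence of the small separation by a Menger-type count, \emph{is} the substance of Seymour and Thomas's argument — and it interacts nontrivially with the exact form of the inductive invariant (the standard proofs carry the recursion relative to a fixed bramble and establish an admissibility property of the decomposition, rather than the plain statement you record). You candidly flag this as the main obstacle, but until that lemma is actually proved, what you have for this direction is a plausible blueprint, not a proof.
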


We also require the following two theorems.

\begin{theorem}[Erd\H{o}s-Szekeres Theorem \cite{erdiis1935combinatorial}]\label{thm:es}
Let $p,q \in \N$. Every sequence of at least $(p - 1)(q - 1) + 1$ distinct integers contains an increasing subsequence of length $p$ or a decreasing subsequence of length $q$.
\end{theorem}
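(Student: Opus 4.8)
The plan is to use the classical pigeonhole argument (due to Seidenberg). Write the sequence as $x_1, x_2, \dots, x_n$ with $n \geq (p-1)(q-1)+1$. For each index $i$, I would let $a_i$ denote the maximum length of an increasing subsequence of $x_1, \dots, x_n$ whose last term is $x_i$, and let $b_i$ denote the maximum length of a decreasing subsequence ending at $x_i$; note $a_i, b_i \geq 1$ for every $i$.

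The key step is to show that the map $i \mapsto (a_i, b_i)$ is injective. Given $i < j$, the terms $x_i$ and $x_j$ are distinct integers, so either $x_i < x_j$ or $x_i > x_j$. In the first case any increasing subsequence ending at $x_i$ extends by appending $x_j$, whence $a_j \geq a_i + 1 > a_i$; in the second case symmetrically $b_j \geq b_i + 1 > b_i$. Either way $(a_i, b_i) \neq (a_j, b_j)$, which gives injectivity.

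To conclude, suppose towards a contradiction that the sequence has no increasing subsequence of length $p$ and no decreasing subsequence of length $q$. Then $a_i \in \{1, \dots, p-1\}$ and $b_i \in \{1, \dots, q-1\}$ for all $i$, so the $n$ pairs $(a_i, b_i)$ all lie in a set of cardinality $(p-1)(q-1)$. Since these pairs are pairwise distinct by the previous step and $n \geq (p-1)(q-1)+1$, this is impossible. Hence the desired increasing or decreasing subsequence must exist.

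Since the statement is entirely classical, there is no genuine obstacle here; the only point that warrants a moment of care is the case analysis establishing injectivity, where one explicitly invokes the hypothesis that the terms are distinct (so that the possibility $x_i = x_j$ for $i \neq j$ need not be considered).
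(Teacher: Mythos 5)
Your proof is correct and complete: it is the standard Seidenberg pigeonhole argument for the Erd\H{o}s--Szekeres theorem. Note, however, that the paper does not prove this statement at all---it simply cites the original 1935 paper and uses the result as a black box---so there is no proof in the paper to compare against; your argument would serve as a perfectly adequate self-contained justification if one were wanted.
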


\begin{theorem}[Fiorini, Joret, Theis, and Wood~\cite{Fiorini20121226}, see also \cite{Montgomery2015loga, Shapira2015}]\label{thm:small-minors}
There is a function $f_{\sref{thm:small-minors}} \colon \N \to \N$ such that, for every $n,p \in \N$, if an $n$-vertex graph has average degree at least $f_{\sref{thm:small-minors}}(p)$, then it contains a $K_p$-model on $O(\log n)$ vertices.
\end{theorem}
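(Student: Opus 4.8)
The plan is to reduce to the case where $G$ has large minimum degree, and then to build a small $K_p$-model by hand, exploiting the fact that large minimum degree forces short cycles and, more generally, short connections between prescribed vertex sets.

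\emph{Reduction to large minimum degree.} Set $d:=f_{\sref{thm:small-minors}}(p)/2$, a constant to be fixed at the end, and assume $G$ has average degree at least $2d$ (otherwise there is nothing to prove). Deleting vertices of degree less than $d$ one at a time yields a nonempty subgraph $G_1\subseteq G$ of minimum degree at least $d$, and passing to a component we may take $G_1$ connected; put $N:=|G_1|\le n$. Since $G_1\subseteq G$ and $\log N\le\log n$, it suffices to find a $K_p$-model in $G_1$ on $O(\log N)$ vertices, where from now on implied constants may depend on $p$.

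\emph{Core claim.} I would show that, for $d$ large enough in terms of $p$, every connected $N$-vertex graph of minimum degree at least $d$ contains a subdivision of $K_p$ in which every branch-path has length at most $C\log N$, for some absolute constant $C$; contracting each branch-path to an edge then produces a $K_p$-model on $O(\log N)$ vertices (about $p^2$ branch-paths, each with $O(\log N)$ vertices). The case $p=3$ is exactly the assertion that minimum degree at least $3$ forces a cycle of length $O(\log N)$, a standard counting argument (the Moore bound). For the inductive step $p-1\to p$, one keeps a $K_{p-1}$-subdivision $S$ with short branch-paths together with the set $U\supseteq V(S)$ of vertices used so far, $|U|=O(\log N)$, and tries to pick a new branch vertex and link it by short internally disjoint paths, avoiding the internal vertices of $S$, to the $p-1$ existing branch vertices. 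The geometric input is that large minimum degree also yields short paths between prescribed sets: a BFS argument shows that in a graph of minimum degree at least $\delta$ some ball of radius $O(\log_\delta N)$ contains a constant fraction of the vertices, so two such balls must intersect; this is where the $\log N$ bound comes from, and, incidentally, high-girth regular graphs show it cannot be improved.

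\emph{Main obstacle.} The delicate point is that $|U|=\Theta(\log N)$ is not negligible next to $d$ (a constant depending only on $p$), so deleting $U$ can destroy the minimum-degree hypothesis and a naive induction breaks. I would handle this either by reserving in advance, at each branch vertex and along each branch-path, a large private neighbourhood that is never reused, or by first invoking Mader's theorem (minimum degree at least $4k$ forces a $k$-connected subgraph) to work inside a highly connected subgraph $G_2\subseteq G_1$, where Menger's theorem supplies many internally disjoint paths that survive the removal of a few vertices. Either way, the step I expect to be hardest is showing these connecting paths can be taken of length $O(\log N)$ rather than $O(N)$: a highly connected graph may have large diameter, so one must locate the $K_p$-structure in a region where balls do expand --- which brings back the short-cycle estimates. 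Fixing $d$ (hence $f_{\sref{thm:small-minors}}(p)$) large enough to make all of these estimates go through completes the argument.
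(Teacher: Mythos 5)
The paper does not prove this theorem; it is cited as known from Fiorini, Joret, Theis, and Wood~\cite{Fiorini20121226} (see also~\cite{Montgomery2015loga, Shapira2015}), so there is no ``paper's own proof'' to compare against and I can only assess whether your sketch stands on its own. It does not, and the gap is exactly at the step you flag as the source of the $\log N$ bound.

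The claim that in a graph of minimum degree $\delta$ some ball of radius $O(\log_\delta N)$ must contain a constant fraction of the vertices is false: minimum degree alone does not force ball expansion. Take a path $v_1,\dots,v_m$, blow each $v_i$ up to a clique on $\delta+1$ vertices, and join consecutive cliques by a perfect matching. The minimum degree is at least $\delta$, yet a ball of radius $r$ around any vertex meets only $O(r)$ of the cliques, so one needs radius $\Theta(N/\delta)$ before any ball captures a constant fraction of $V$. What minimum degree actually buys is the Moore bound: BFS from a vertex, and the ball is a \emph{tree} (hence grows like $(\delta-1)^r$) only until the first non-tree edge is encountered, at which point you get a short cycle of length $O(\log_\delta N)$ but the exponential growth may stop. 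That is a much weaker statement than the one your induction leans on, and in particular it does not give short connections between two prescribed sets. Your two proposed fixes do not repair this: reserving private neighbourhoods requires precisely the local expansion that is missing, and Mader plus Menger gives many internally disjoint paths but with no control on their length --- a $k$-connected graph can have diameter $\Theta(N/k)$, as the blown-up path above already shows. So the ``main obstacle'' you identify is real, and neither fix resolves it; the inductive step is not carried out. For the record, the route you propose (building a $K_p$-subdivision with short branch paths) is essentially Montgomery's~\cite{Montgomery2015loga}, and it does work, but the argument needed there is substantially more delicate than the one sketched here and does not pass through the false ball-expansion claim.
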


\section{From the main technical theorem to the main theorem}
\label{sec:tighep}

In this section, we show how \autoref{thm:main}
can be deduced from \autoref{thm:main_technical}.
We follow the same line of proof as in \cite{ErdosPosaWheels} by considering a minimal counterexample and showing that the outcomes of \autoref{thm:main_technical} contradict its minimality. By \emph{minor-minimal} we mean minimal with respect to the minor ordering.
We rely on the following results.

\begin{theorem}[Fomin, Lokshtanov, Misra, and Saurabh {\cite[Corollary~1]{FominFdel}}]
\label{thm:fominkernel}
For every planar graph $H$, there is a polynomial $p_{\sref{thm:fominkernel}}$ such that for every $t \in \N$, every graph $G$ with $\tau_H(G) = t$ and minor-minimal with this property satisfies $|G| \leq p_{\sref{thm:fominkernel}}(t)$.
\end{theorem}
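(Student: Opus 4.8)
The plan is to regard this as a kernelization statement and prove it by protrusion replacement, using minor-minimality as the source of all size bounds. Fix the planar graph $H$ and a graph $G$ that is minor-minimal with $\tau_H(G)=t$, and pick $X\subseteq V(G)$ with $|X|=t$ such that $G-X$ has no $H$-minor. Since $H$ is planar and has $|H|$ vertices, \autoref{th:gridminor} gives $\tw(G-X)\le f_{\sref{th:gridminor}}(|H|)-1=:c_H$, a constant as $H$ is fixed; so $X$ is a size-$t$ modulator to a graph of treewidth $O(1)$, and $\tw(G)\le t+c_H$. The first step is to turn $X$ and a width-$c_H$ tree-decomposition of $G-X$ into a \emph{protrusion decomposition} $V(G)=R_0\uplus R_1\uplus\dots\uplus R_\ell$, where each $R_i$ ($i\ge 1$) is a $\beta$-protrusion for some $\beta=O(c_H)$ (meaning $\partial_G R_i\subseteq R_0$, $|\partial_G R_i|\le\beta$ and $\tw(G[R_i])\le\beta$), with $\ell\le\mathrm{poly}(t)$ and $|R_0|\le\mathrm{poly}(t)$. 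Here minor-minimality is needed not just later but already to bound $\ell$ and $|R_0|$: without it one could attach many ``parallel'' pieces to the modulator and inflate the decomposition.

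The engine is a finite-integer-index statement for $\tau_H$. Regard the pieces below as \emph{boundaried graphs} $(J,S)$ with $|S|\le\beta$, where $J_1\oplus K$ denotes gluing $(J_1,S)$ and $(K,S)$ along $S$; say $(J_1,S)\equiv(J_2,S)$ if there is $\Delta\in\mathbb{Z}$ with $\tau_H(J_1\oplus K)=\tau_H(J_2\oplus K)+\Delta$ for all $(K,S)$. The claim to establish is that, for fixed $H$ and $\beta$, $\equiv$ has only a constant number $N=N(H,\beta)$ of classes. Granting this, one bounds $|R_i|$ as follows. Take a width-$\le\beta$ tree-decomposition of $G[R_i]$ rooted so that $\partial_G R_i$ lies in the root bag; each node $a$ determines a separator and a boundaried graph $J_a$ hanging below it. If $|R_i|$ is large, some root-to-leaf path carries more than $N\cdot(t+1)$ nodes, so it contains a long chain of nested pieces $J_{a_0}\supseteq J_{a_1}\supseteq\dots$ all in the same $\equiv$-class; excising the segment between two consecutive ones is a minor operation on $G$ that changes $\tau_H$ by a fixed $\Delta_i\ge 0$ (nonnegativity because the excision produces a minor and $\tau_H$ is minor-monotone), and the total change over all $>t$ segments is at most $\tau_H(G)=t$, so some $\Delta_i=0$. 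Excising that segment yields a proper minor $G'$ of $G$ with $\tau_H(G')=t$, contradicting minor-minimality. Hence $|R_i|\le\mathrm{poly}(t)$ for each $i$, and the same kind of pumping/pruning across the pieces attached to $X$ gives $\ell,|R_0|\le\mathrm{poly}(t)$; altogether $|G|\le\mathrm{poly}(t)$.

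I expect the finiteness claim for $\equiv$ to be the main obstacle, chiefly because the glued graph $K$ is completely arbitrary: $R_i$ has bounded treewidth but the rest of $G$ does not, so the standard Myhill--Nerode theory for monadic second-order logic over bounded-treewidth graphs does not apply off the shelf. The route I would take exploits that an $H$-minor is a \emph{bounded-size} witness: any $H$-model in $J\oplus K$ intersects $J$ in at most $|H|$ connected subgraphs, each touching the boundary $S$, so the behavior of $(J,S)$ relevant to computing $\tau_H(J\oplus K)$ is encoded by a finite table --- for each of the finitely many ``patterns'' of partial $H$-models rooted at $S$, the minimum number of vertices of $J$ one must delete to kill all $H$-models realizing that pattern (with arbitrary help from $K$ through $S$) --- and two boundaried graphs with the same table are $\equiv$-equivalent, which bounds $N$ by a function of $H$ and $\beta$. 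Secondary obstacles are bookkeeping ones: making each reduction a genuine minor operation so that minor-minimality is actually violated, tracking the additive constants $\Delta_i$ so that the reduced graph has $\tau_H$ exactly $t$ (handled above by the ``sum $\le t$'' argument), and turning the informal ``redundant parallel branches'' idea into a clean bound on $\ell$ and $|R_0|$.
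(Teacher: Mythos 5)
Theorem~\ref{thm:fominkernel} is not proved in this paper: it is quoted as Corollary~1 of Fomin, Lokshtanov, Misra, and Saurabh~\cite{FominFdel} and used as a black box, so there is no in-paper argument to compare against. Your sketch follows the protrusion-replacement / finite-integer-index paradigm that~\cite{FominFdel} themselves use (building on the meta-kernelization framework of Bodlaender et al.), so in that sense you have reconstructed the route of the cited reference: bound the treewidth of $G-X$ via the grid theorem, extract a protrusion decomposition, show $\tau_H$ has finite integer index on $\beta$-boundaried graphs, and use minor-minimality to rule out any replaceable protrusion, hence to bound $|G|$.

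There are, however, several gaps that would need to be closed. (1) Your ``table'' heuristic for finite integer index is not correct as stated: $\tau_H(J\oplus K)$ is a single minimum over deletion sets that may straddle the boundary and must simultaneously destroy \emph{all} $H$-models, not a sum of per-pattern minima; the right invariant is a signature indexed by (boundary subsets retained) $\times$ (folios of partial $H$-models realizable in what remains of $J$), together with the optimal cost in $J$ for each signature, and one still has to prove that equal signatures imply $\equiv$-equivalence. This is precisely the technical core of~\cite{FominFdel}, and it is also where the non-constructiveness enters --- the paper under review explicitly remarks in \autoref{sec:tighep} that $p_{\sref{thm:fominkernel}}$ is not known to be computable in $H$ --- so a ``just write down the finite table'' shortcut is too optimistic. (2) Bounding $|R_i|$ does not by itself produce a long root-to-leaf path in the tree-decomposition; you must also control branching (e.g.\ by collapsing $\equiv$-equivalent sibling subtrees), and you must fix a single boundary size before comparing $\equiv$-classes along a path. (3) Your claim that minor-minimality is what bounds $\ell$ and $|R_0|$ is not how~\cite{FominFdel} proceed: there the protrusion decomposition with $\ell,|R_0|=O(t)$ comes from the modulator-plus-bounded-treewidth structure via a marking/LCA-closure argument, independently of minimality; minimality only enters afterwards, to argue that no protrusion is replaceable. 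The telescoping argument $\sum_i\Delta_i\le t$ forcing some $\Delta_i=0$ is the right mechanism and matches what one needs, but these surrounding steps are nontrivial and would have to be supplied.
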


Let us emphasize that the polynomial $p_{\sref{thm:fominkernel}}$ in \autoref{thm:fominkernel} depends (non-constructively) on $H$. 

\begin{theorem}[Fiorini, Joret, and Wood \cite{FJW2013}]\label{thm:fjw13}
  For every connected planar graph $H$, there is a computable and non-decreasing function $f_{\sref{thm:fjw13}}\colon \N\to \N$ such that, for every graph $G$, if $(A,B)$ is a separation of $G$ where $G[A]$ is $H$-minor free and $|A| \geq f_{\sref{thm:fjw13}}(|A\cap B|)$, then there exists a graph $G'$ such that
  \[
    \tau_H(G') = \tau_H(G),\qquad \nu_H(G') = \nu_H(G),\qquad \text{and} \qquad |G'| < |G|.
  \]
\end{theorem}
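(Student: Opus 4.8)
My plan is to realize $G[A]$ as a \emph{protrusion} of $G$---a part attached to the rest through only $|A\cap B|$ vertices and of bounded treewidth---and to argue that once such a protrusion is too large it can be swapped for a bounded-size gadget without disturbing $\tau_H$ or $\nu_H$. Since $H$ is planar and $G[A]$ is $H$-minor free, the contrapositive of \autoref{th:gridminor} gives $\tw(G[A])\le w$ for a constant $w=w(H)$; by the polynomial grid bound of Chekuri and Chuzhoy one may take $w$ to be a computable function of $|H|$, which is what will make the computability assertion go through. Put $S:=A\cap B$ and $k:=|S|$, and view $(G[A],S)$ and $(G[B],S)$ as boundaried graphs gluing along $S$ to reconstitute $G$; for boundaried graphs $(J_1,S)$ and $(J_2,S)$ write $J_1\oplus J_2$ for the graph obtained by identifying the two copies of $S$, so that $G=G[A]\oplus G[B]$.

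The crux is to introduce an equivalence $\equiv$ on $H$-minor-free boundaried graphs of boundary size $k$ and treewidth at most $w$ that is \emph{finite-index} and \emph{exactly value-preserving}: if $(J_1,S)\equiv(J_2,S)$ then $\tau_H(J_1\oplus C)=\tau_H(J_2\oplus C)$ and $\nu_H(J_1\oplus C)=\nu_H(J_2\oplus C)$ for every boundaried graph $(C,S)$. The equivalence must remember a bounded amount of data about $(J,S)$: the graph induced on $S$; which of the finitely many \emph{boundary patterns} $\gamma$ (roughly, the way the $J$-side of an $H$-model of the glued graph can look: a subgraph of $H$ with a partial assignment of $S$ to its branch sets and a choice of which edges of $H$ are realized inside $J$, no pattern realizing all of $H$ since $J$ is $H$-minor free) are realizable in $J$; for each $\gamma$ and each $Y\subseteq S$, the minimum number of interior vertices of $J$ whose deletion together with $Y$ destroys all realizations of $\gamma$; and which collections of pairwise-disjoint patterns can be realized simultaneously in $J$ with pairwise-disjoint interiors. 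Two structural facts drive the analysis. First, because $J$ has no $H$-model and $H$ is connected, every pattern that can actually occur is anchored at $S$: pairwise-disjoint realizations use pairwise-disjoint nonempty subsets of $S$, so at most $k$ of them fit, and any single pattern is destroyed by deleting the at most $k$ boundary vertices it names. Hence every integer the equivalence records may be capped at a computable function of $k$, $w$, $|H|$ (deleting at most $k$ further boundary vertices is always an alternative strategy in the global optimum), which makes the number of $\equiv$-classes finite. Second, these same facts yield the dichotomy that an $H$-model of $J\oplus C$ either avoids the interior of $J$ or splits into a pattern realized in $J$ and a complementary pattern realized in $C$; pushing this through the finite bookkeeping shows the recorded data determines $\tau_H(J\oplus C)$ and $\nu_H(J\oplus C)$ \emph{exactly}, not merely up to an additive constant as in the standard finite-integer-index machinery. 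Since the whole analysis is an explicit tree-automaton / Myhill--Nerode computation on graphs of treewidth at most $w$---rather than an appeal to the non-constructive well-quasi-ordering of graphs under minors---the number of classes is computable and every class has a designated representative of size at most some computable $m(k,w)$.

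Then I set $f_{\sref{thm:fjw13}}(k):=1+\max_{j\le k}m(j,w(H))$, which is non-decreasing and computable. If $|A|\ge f_{\sref{thm:fjw13}}(k)$ then $|G[A]|>m(k,w(H))$, so $(G[A],S)$ is strictly larger than the designated representative $(J',S)$ of its $\equiv$-class, which satisfies $|J'|\le m(k,w(H))<|G[A]|$; putting $G':=J'\oplus G[B]$ gives $|G'|<|G|$, and exact value-preservation (applied with $C=G[B]$) gives $\tau_H(G')=\tau_H(G)$ and $\nu_H(G')=\nu_H(G)$, as required. I expect the main obstacle to be precisely this middle step: establishing that $\tau_H$ and $\nu_H$ are finite-state on bounded-treewidth protrusions in the strong, simultaneous, additive-constant-free sense above, and extracting the bound $m(k,w)$ explicitly so that $f_{\sref{thm:fjw13}}$ comes out computable rather than merely existent.
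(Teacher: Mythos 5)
The paper does not actually prove this statement; it is quoted verbatim from Fiorini, Joret, and Wood \cite{FJW2013}, and the only local content is the one-line remark that the function from that reference can be made non-decreasing by taking running maxima. So there is no internal proof to compare against. That said, your protrusion-replacement plan is precisely the route of the cited source, and the structural facts you isolate are the right ones: planarity of $H$ plus $H$-minor-freeness of $G[A]$ give $\tw(G[A])\le w(H)$ via \autoref{th:gridminor} (in computable form by Chekuri--Chuzhoy), and connectivity of $H$ together with $H$-minor-freeness of $G[A]$ force every $H$-model that enters $A\setminus B$ to meet $S=A\cap B$, so the protrusion interacts with the rest of the graph only through bounded boundary data and the relevant integers are capped by $|S|$. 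Basing computability on an explicit automaton/Myhill--Nerode analysis over bounded-treewidth graphs, rather than on WQO, is also exactly what makes the statement effective.

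Two technical corrections, neither of which upsets the plan. First, recording for each \emph{single} pattern $\gamma$ and each $Y\subseteq S$ the minimum number of interior deletions destroying $\gamma$ does not determine $\tau_H$ exactly: when several patterns must be destroyed simultaneously the joint minimum can be smaller than the sum of the individual minima, so you should instead record, for each $Y\subseteq S$ and each set $\mathcal{P}$ of patterns, the least size (capped at $k=|S|$) of an interior set $Z$ such that every pattern realizable in $J-Y-Z$ lies in $\mathcal{P}$; the cap $k$ is justified by the exchange argument you allude to (replacing $X\cap(A\setminus B)$ by $S$ in any hitting set shows an optimum never spends more than $|S|$ vertices strictly inside the protrusion, since any $H$-model meeting $A\setminus B$ also meets $S$). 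Second, your notion of boundary pattern should allow a single branch set of $H$ to contribute several connected fragments to $J$, each meeting $S$, because a branch set that straddles the separation may be reconnected through the other side; an assignment of $S$-vertices to one piece per branch set is not expressive enough. Both fixes preserve finiteness and computability of the number of types, so the derivation of $f_{\sref{thm:fjw13}}(k):=1+\max_{j\le k}m(j,w(H))$ goes through as you intend.
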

\autoref{thm:fjw13} as originally stated in \cite{FJW2013} does not
guarantee that $f_{\sref{thm:fjw13}}$ is non-decreasing. We can however
easily obtain this property by defining $f_{\sref{thm:fjw13}}(k) = \max_{i\in \{0, \dots, k\}} f(k)$, with $f$ the function given in~\cite{FJW2013}, and clearly $f_{\sref{thm:fjw13}}$ then has the properties claimed in \autoref{thm:fjw13}.

\begin{lemma}[Aboulker, Fiorini, Huynh, Joret, Raymond, and Sau
{\cite[Lemma 2.7]{ErdosPosaWheels}}, reworded]\label{lem:minorep} 
Let $H'$ be a planar graph and let $f'$ be a bounding function for $H'$-models. 
Then, for each minor $H$ of $H'$, there is a bounding function $f$ for $H$-models with $f\in O(f')$. 
\end{lemma}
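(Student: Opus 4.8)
The plan is to build a hitting set for the $H$-models of an arbitrary graph $G$ out of two pieces: a first piece that turns $G$ into an $H'$-minor free graph — whose size is controlled by the hypothesis on $f'$ — and a second, \emph{linearly}-sized piece that cleans up the remaining $H$-models, using the linear \ep{} bound available in proper minor-closed classes (\autoref{thm:linear}).

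Here are the steps. Fix a graph $G$ and set $k:=\nu_H(G)$. As a preliminary normalization, we may assume $f'$ is non-decreasing and satisfies $f'(k)\ge k$ for all $k$ (replace $f'$ by the pointwise-minimal bounding function for $H'$-models; its monotonicity and the lower bound $f'(k)\ge k$ both follow by considering $k$ pairwise disjoint copies of $H'$). Next, since $H$ is a minor of $H'$, every $H'$-model $\mc{M}$ in any graph contains an $H$-model on a subset of $V(\mc{M})$ — because $G[V(\mc{M})]$ has $H'$, hence $H$, as a minor — and applying this to a maximum packing of $H'$-models gives $\nu_{H'}(G)\le\nu_H(G)=k$ for every graph $G$. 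Consequently the hypothesis on $f'$ yields a set $Y\subseteq V(G)$ with $|Y|=\tau_{H'}(G)\le f'(\nu_{H'}(G))\le f'(k)$ such that $G-Y$ is $H'$-minor free. Now the class $\mc{G}$ of $H'$-minor free graphs is a proper minor-closed class (as $H'$ is a nonempty planar graph) and $H$ is planar, so \autoref{thm:linear} provides a constant $c'=c'(H,H')$ with $\tau_H(J)\le c'\,\nu_H(J)$ for every $J\in\mc{G}$. Applying this to $J:=G-Y\in\mc{G}$ gives a set $Z\subseteq V(G-Y)$ with $|Z|\le c'\,\nu_H(G-Y)\le c'\,\nu_H(G)=c'k$ meeting every $H$-model of $G-Y$. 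Since every $H$-model of $G$ avoiding $Y$ is an $H$-model of $G-Y$, the set $Y\cup Z$ meets every $H$-model of $G$, and therefore $\tau_H(G)\le |Y|+|Z|\le f'(k)+c'k\le (1+c')\,f'(k)$, using $f'(k)\ge k$. Thus $f:=(1+c')\,f'$ is a bounding function for $H$-models and $f\in O(f')$, as required.

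There is no genuinely hard step: the difficulty is entirely absorbed into \autoref{thm:linear}, and the only places needing a little care are the inequality $\nu_{H'}\le\nu_H$ and the normalization of $f'$ to a non-decreasing function bounded below by the identity. One should also note that there is no circularity — \autoref{thm:linear} enters only as the known theorem of Fomin, Saurabh, and Thilikos, whose proof is independent of the present paper.
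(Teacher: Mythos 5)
Your proof is correct, and the key ideas — that $\nu_{H'}\le\nu_H$, that removing a $\tau_{H'}$-witness leaves an $H'$-minor-free (hence bounded-treewidth) graph, and that the \ep{} property has a \emph{linear} bounding function on such a graph — are exactly the right ones. Note that the paper itself does not reprove \autoref{lem:minorep}: it simply cites \cite[Lemma~2.7]{ErdosPosaWheels}, so there is no in-paper proof to compare against. Your normalization of $f'$ (pass to the pointwise-minimal bounding function, which is non-decreasing and satisfies $f'(k)\ge k$ by considering $k$ disjoint copies of $H'$) is a clean way to handle the bookkeeping, and you are right that invoking Fomin--Saurabh--Thilikos (\autoref{thm:linear}) as an external theorem introduces no circularity, since the present paper only uses \autoref{lem:minorep} inside the proof of \autoref{thm:main}.

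One remark on economy of hypotheses: you do not need the full strength of \autoref{thm:linear}. Once $G-Y$ is $H'$-minor free you know $\tw(G-Y)\le f_{\sref{th:gridminor}}(|H'|)=:w$ by \autoref{th:gridminor}, and Robertson and Seymour's Theorem~8.8 in \cite{robertson1986graph} (already quoted in \autoref{sec:apalg} of the paper) directly gives a hitting set $Z$ for $H$-models in $G-Y$ of size at most $(b\,\nu_H(G-Y)-1)(w+1)\le b(w+1)k$, where $b$ is the number of components of $H$. Substituting this for the appeal to \autoref{thm:linear} makes the argument more self-contained and yields the explicit constant $c'=b(w+1)$, without changing anything else in your proof.
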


We are now ready to prove \autoref{thm:main}, assuming \autoref{thm:main_technical}.

\begin{proof}[Proof of \autoref{thm:main}]
  Let us first assume that $H$ is connected.
  We explain at the end of the proof how the result extends to disconnected graphs.

  Let $\alpha$ and $\beta$ be positive integers such that
  for every integer $k\geq 1$, we have
  $p_{\sref{thm:fominkernel}}(k) \leq \alpha
  k^\beta$, where $p_{\sref{thm:fominkernel}}$
  is the function of
  \autoref{thm:fominkernel} for $H$. Such numbers exist as this function is a polynomial.

  Let $f_{\sref{thm:fjw13}}$ be the function of \autoref{thm:fjw13} for the graph
  $H$. Clearly we can assume $f_{\sref{thm:fjw13}}(0) = 1$. Let $\sigma$ be the constant of \autoref{thm:main_technical}
  for $p=|H|$ and for the function~$f_{\sref{thm:fjw13}}$. 
  We prove \autoref{thm:main} for $f(k) = ck \log (k+1)$, where $c$ is a positive integer such that  $c \geq \sigma (\log \alpha + \beta \log c + 2\beta)$.

  Towards a contradiction, suppose
  $\tau_H(G) > f(\nu_H(G))$ for some graph $G$. Among all such graphs, we choose $G$ such that
  the tuple ($\nu_H(G), |G|, \Vert G \Vert)$ is lexicographically minimum.       
  Let $k = \nu_H(G)\geq 1$.

  We apply \autoref{thm:main_technical} on $G$ with $p  =  |H|$ and $g = f_{\sref{thm:fjw13}}$.
  According to \autoref{thm:fjw13}, the outcome \ref{e:sep} of \autoref{thm:main_technical} implies the existence of a graph $G'$ such that
  \[
    \tau_H(G') = \tau_H(G),\qquad \nu_H(G') = \nu_H(G),\qquad \text{and} \qquad |G'| < |G|.
  \]
  This would however contradict the minimality of $G$.
  Therefore we may now assume that one of the first two outcomes of \autoref{thm:main_technical} holds.   
  Which of the two outcomes holds is not important for the rest of the proof, as we will only use the fact that $G$ contains a model $\mathcal{M}$ of $H$ of size at most $\sigma \cdot \log |G|$, which is true in both cases. 
  Using properties of $G$ we will show that $|V(\mathcal{M})| \leq \sigma \cdot \log |G| \leq c \log (k+1)$. 
  Once this is established, using that the graph $G - V(\mathcal{M})$ is not a counterexample to \autoref{thm:main_technical}, 
  we will conclude that $G$ cannot be a counterexample either. 

  The definition of $G$ implies that it is minor-minimal with the property $\tau_H(G) > f(\nu_H(G))$.
  Thus, if $G'$ is a proper minor of $G$, then $\tau_H(G') \leq f(\nu_H(G')) \leq f(\nu_H(G)) = f(k)$ (since $\nu_H(G') \leq \nu_H(G)$).
  In particular, $G$ is minor-minimal with the property $\tau_H(G) > f(k)$.
  Now, observe that $\tau_H(G) \leq \tau_H(G-v) + 1$ for any vertex $v\in V(G)$ (simply add $v$ to an optimal hitting set for $G-v$).
  Hence, $\tau_H(G) \leq \tau_H(G-v) + 1 \leq f(\nu_H(G-v)) + 1 \leq f(\nu_H(G)) + 1 = f(k)+1$ and $\tau_H(G) = f(k) + 1$.
  Therefore, we can apply \autoref{thm:fominkernel}.
  \begin{align*}
    |G| &\leq \alpha \tau_{H}(G)^{\beta}&
                                                      (\text{\autoref{thm:fominkernel}})\\
        &= \alpha(f(k) + 1)^{\beta} \\
        &\leq \alpha(c(k+1)^2)^{\beta}.
  \end{align*}
  Then
  \begin{align*}
     |V(\mathcal{M})| &\leq \sigma \cdot \log |G| & (\text{definition of}\ \mathcal{M})\\
        & \leq \sigma (\log \alpha + \beta \log c + 2 \beta) \log (k+1)\\
        & \leq c \log (k+1) & (\text{definition of}\ c).\\
  \end{align*}
  Let us consider the graph $G'  := G - V(\mathcal{M})$.
  Observe that
  \[
    \nu_H(G) \geq \nu_H(G') + 1 \qquad \text{and} \qquad \tau_H(G)
    \leq \tau_H(G') + c \log (k + 1).
  \]
  By minimality of $G$, we have $\tau_H(G') \leq f(\nu_H(G'))$. Then
  \begin{align*}
    \tau_H(G) &\leq f(\nu_H(G')) + c \log (k + 1) \\
              &\leq f(k-1) + c \log (k + 1) &(f\ \text{is
                                                          non-decreasing})\\
              &\leq f(k).
  \end{align*}
  Therefore, $G$ is not a counterexample, a contradiction.

  We now consider the case where $H$ is not connected. Let $H'$ be a
  planar connected graph with $V(H') = V(H)$ and $E(H') \supseteq
  E(H)$. Such a graph can be obtained from planar
  drawings of the components of $H$ by adding edges between their
  external faces in a planar way. As shown in the first part of the
  proof, there is a bounding function $f'(k)  = c' k \log
  (k+1)$ for $H'$-models, for some constant $c'$ depending on $H'$ only.
  By applying \autoref{lem:minorep} to $H'$, $f'$, and $H$,
  we obtain a bounding function $f$ for $H$-models which is of the same order of magnitude as $f'$, as desired.
\end{proof}

\section{Orchards}\label{sec:orchards}

We prove in this section a series of lemmas about bramble-like objects that we call orchards.
Given positive integers $a, b$, an {\em $a \times b$-orchard} $\mathsf{R}$ in $G$ is a collection $P_1, \dots, P_a$ of $a$ pairwise vertex-disjoint paths, called {\em horizontal paths}, and a collection $T_1, \dots, T_b$ of $b$ pairwise vertex-disjoint trees, called {\em vertical trees}, such that

\begin{itemize}
	\item $P_i \cap T_j$ is non-empty and connected (and thus a path) for each $i\in [a]$ and $j\in [b]$, and
	\item each leaf of $T_j$ is on some horizontal path, for each $j\in [b]$. 	
\end{itemize}
With a slight abuse of notation we also write $\mathsf{R}$ for the subgraph formed by the union of the horizontal paths and vertical trees of $\mathsf{R}$.
It should be clear from the context whether $\mathsf{R}$ means the orchard itself or the corresponding subgraph of $G$.

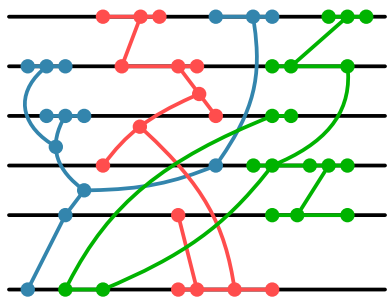
\begin{figure}[h]
  \centering
  \begin{tikzpicture}[every node/.style = {black node, line width = 0}, every path/.style = {line width = 0.05cm, line cap = round}, yscale = 0.66]
    \foreach \i in {1,..., 5}{
      \draw (0, -\i) -- ++(5,0);
    }
    \draw (0, -6.5) -- ++(5,0);
    \draw[color = blue!60!green!80, every node/.style = {black node, fill = blue!60!green!80}]
    (0.25,-2) node (b1) {} -- ++(0.25,0) node (b2) {} -- ++(0.25,0) node (b3) {} 
    (0.5,-3) node (b4) {} -- ++(0.25,0) node (b5) {} -- ++(0.25,0) node (b6) {} 
    (b5) to[bend right] node[pos=0.333] (b7) {} (1,-4.5) node (b8) {} 
    (b7) to[bend left = 40] (b2)
    (0.25, -6.5) node (b10) {} -- ++(0.5,1.5) node (b9) {} -- (b8)
    (b8) to[bend right = 15] (2.75,-4) node (b11) {}
    (2.75,-1) node (b13) {} --++(0.75,0) node (b14) {} node[pos = 0.66] (b12) {} 
    (b12) to[bend left = 15] (b11);
    \draw[color = red!70, every node/.style = {black node, fill =red!70}]
    (1.25,-1) node (r1) {} -- ++(0.5,0) node (r2) {} -- ++(0.25,0) node (r3) {} 
    (r2) --++(-0.25,-1) node (r4) {} -- ++ (0.75,0) node (r5) {} -- ++(0.25,0) node (r6) {} 
    (r5) -- ++(0.5,-1) node (r8) {} node[midway] (r7) {} 
    (r7) to[bend right=15] node[midway] (r9) {} (1.25,-4) node (r10) {} 
    (r9) to[bend left = 15] (3,-6.5) node (r11) {} (r11) -- ++(0.5,0) node (r12) {} (r11) -- ++(-0.5,0) node (r13) {} --++(-0.25,0) node (r14) {} 
    (r13) --++(-0.25, 1.5) node (r15) {};
    \draw[color = green!70!black, every node/.style = {black node, fill = green!70!black}]
    (r3) ++(2.25,0) node (g1) {} -- ++(0.25,0) node (g2) {} --++(0.25,0) node (g3) {} 
    (g2)  --++(-0.75,-1) node (g4) {} --++(-0.25,0) node (g5) {} (g4) -- ++(0.75,0) node (g6) {}
    (g6) to[bend left] ++(-1,-2) node (g7) {} (g7) -- ++(-0.25,0) node (g8) {} (g7) --++(0.5,0) node (g9) {} --++(0.25,0) node (g10) {} --++(0.25,0) node (g11) {} 
    (g10) ++(-0.75,-1) node (g14) {} --++(1,0) node (g13) {} node[pos=0.333] (g12) {} (g12) -- (g10) 
    (g7) to[bend left=15] (1.25, -6.5) node (g15) {} -- ++(-0.5,0) node (g16) {} 
    (g16) to[bend left = 20] (3.5,-3) node (g17) {} (g17) --++(0.25,0) node (g18) {};
  \end{tikzpicture}
  \caption{A $6\times 3$-orchard. Horizontal paths are depicted in black and vertical trees in color.}
  \label{fig:orch}
\end{figure}

Orchards are similar to brambles in the sense that they can serve as certificates for large treewidth.
In fact, every large enough orchard contains a bramble of large order (see the proof of \autoref{lem:big-orchard}).
However, they are more structured, which makes them easier to handle.
We note that grids are particular examples of orchards.
Thus, in this sense orchards lie somewhere in between grids and brambles.
We note that a concept similar to orchards is that of {\em grid-like minors}, introduced by Reed and Wood~\cite{ReedWood11}.
Grid-like minors are collections of paths whose intersection graphs are bipartite and contain a large clique minor.
While orchards and grid-like minors have common features (note that the intersection graph of the horizontal paths and vertical trees of an orchard is a complete bipartite graph), in general they are incomparable objects.

The main result of this section is a separation lemma for orchards, \autoref{lem:separ}, which will be used in the proof of \autoref{thm:main_technical}.

\begin{lemma}\label{lem:big-orchard}
 If a graph $G$ contains an $(f_{\sref{th:gridminor}}(t)+1) \times (f_{\sref{th:gridminor}}(t)+1)$-orchard, then $G$ contains every $t$-vertex planar graph as a minor.
\end{lemma}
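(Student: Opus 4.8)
The plan is to build a bramble of order $f_{\sref{th:gridminor}}(t)+1$ directly from the orchard, invoke the bramble/treewidth duality (\autoref{th:brambleduality}) to deduce that $G$ has treewidth at least $f_{\sref{th:gridminor}}(t)$, and then apply the Grid Minor Theorem (\autoref{th:gridminor}) to obtain every $t$-vertex planar graph as a minor of $G$. The bramble is the natural ``crosses'' bramble one would use for a grid, adapted to orchards: each bramble set is the union of one horizontal path and one vertical tree.

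Concretely, write $a = f_{\sref{th:gridminor}}(t)+1$, let $P_1,\dots,P_a$ be the horizontal paths and $T_1,\dots,T_a$ the vertical trees of the orchard, and set $B_{i,j} := V(P_i) \cup V(T_j)$ for $i,j \in [a]$. First I would check that $\mathcal{B} := \{B_{i,j} : i,j \in [a]\}$ is a bramble. Each $B_{i,j}$ is the vertex set of a connected subgraph of $G$, since $P_i$ and $T_j$ are connected and $P_i \cap T_j$ is non-empty by the definition of an orchard. Moreover, for any two sets $B_{i,j}$ and $B_{i',j'}$, the orchard property applied to $i'$ and $j$ gives that $P_{i'}$ and $T_j$ share a vertex, so $B_{i,j} \cap B_{i',j'} \supseteq V(P_{i'}) \cap V(T_j) \neq \emptyset$; thus any two bramble sets actually intersect, which is stronger than what a bramble requires.

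Next I would lower-bound the order of $\mathcal{B}$. Let $W \subseteq V(G)$ be a set meeting every member of $\mathcal{B}$. If $W$ meets all of $P_1,\dots,P_a$, then, since these paths are pairwise vertex-disjoint, $|W| \geq a$. Otherwise there is an index $i_0$ with $W \cap V(P_{i_0}) = \emptyset$; then, for each $j \in [a]$, $W$ meets $B_{i_0,j} = V(P_{i_0}) \cup V(T_j)$ only through $V(T_j)$, and since the trees $T_1,\dots,T_a$ are pairwise vertex-disjoint this again gives $|W| \geq a$. Hence $\mathcal{B}$ has order at least $a = f_{\sref{th:gridminor}}(t)+1$. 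By \autoref{th:brambleduality}, $\tw(G) \geq f_{\sref{th:gridminor}}(t)$, and then \autoref{th:gridminor} yields that $G$ contains every $t$-vertex planar graph as a minor.

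There is no serious obstacle here: the lemma is a short bridge between the orchard machinery of this section and the known results of the preliminaries. The only mild care needed is in the order computation, where one must invoke the pairwise vertex-disjointness of the horizontal paths (resp. of the vertical trees) in the two cases; note also that the second defining property of an orchard (each leaf of $T_j$ lies on a horizontal path) is not needed for this particular lemma.
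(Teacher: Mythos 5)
Your proof is correct and follows essentially the same route as the paper: both construct the bramble whose sets are unions $V(P_i)\cup V(T_j)$ of one horizontal path and one vertical tree, bound its order by $f_{\sref{th:gridminor}}(t)+1$ using pairwise disjointness, and then invoke \autoref{th:brambleduality} followed by \autoref{th:gridminor}. Your write-up is merely more explicit about the order computation (the two-case argument) than the paper's.
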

\begin{proof}
Let $\mathsf{R}$ be an $(f_{\sref{th:gridminor}}(t)+1) \times (f_{\sref{th:gridminor}}(t)+1)$-orchard with a collection of horizontal paths $\mathcal{P}$ and a collection of vertical trees $\mathcal{T}$.
Consider the bramble
$\mathcal{B}:=\left\{ T \cup P \mid  (T,P) \in \mathcal{T} \times \mathcal{P} \right\}$ in $G$. Since the vertical trees are vertex-disjoint, the horizontal paths are vertex-disjoint, and $|\mathcal{T}|=|\mathcal{P}|=f_{\sref{th:gridminor}}(t)+1$,  it follows that the order of $\mathcal{B}$ is at least $f_{\sref{th:gridminor}}(t)+1$. By \autoref{th:brambleduality}, $G$ has treewidth at least $f_{\sref{th:gridminor}}(t)$, and therefore by \autoref{th:gridminor}, $G$ contains every $t$-vertex planar graph as a minor.
\end{proof}

Let $\mathsf{R}$ be an $a \times b$-orchard with horizontal paths $P_1, \dots, P_a$ and vertical trees $T_1, \dots T_b$.  Let $\mathsf{P}=\bigcup_{i \in [a]} P_i$ and $\mathsf{T}=\bigcup_{i \in [b]} T_i$.  We say that $\mathsf{h}$ is a \emph{horizontal section} if $\mathsf{h}=P_i \cap T_j$ for some $i \in [a], j \in [b]$ or if $\mathsf{h}$ is a component of $\mathsf{P} - V(\mathsf{T})$.  Note that the set of all horizontal sections is a collection of vertex-disjoint paths whose union covers all vertices of $\mathsf{P}$.  Let $W$ be the set of vertices $w$ such that for some $i \in [b]$, $w \in V(T_i)  \setminus  V(\mathsf{P})$ and $\deg_{T_i}(w) \geq 3$.  We say that $\mathsf{v}$ is a \emph{vertical section} if $\mathsf{v}$ is a vertex in $W$ (seen as a single-vertex path) or if $\mathsf{v}$ is a component of $\mathsf{T} - (V(\mathsf{P}) \cup W)$.  We say that $\mathsf{s}$ is a \emph{section} if $\mathsf{s}$ is a horizontal or a vertical section.  Note that the set of all sections is a collection of vertex-disjoint paths whose union covers all vertices of $\mathsf{R}$. In the proofs of Lemmas~\ref{lem:separateorchard} and~\ref{lem:separatepathmix} below, we will use several times that if $\mathsf{R}$ has $a\geq 2$ horizontal paths, then each of its vertical trees defines at most $1+3 \cdot (a-2) < a^2$ vertical sections.\footnote{
\emph{Proof}. We proceed by induction on $a$. If $a=2$, then every vertical tree has one vertical section. Let $a\geq3$. Let $\mathsf{R}$ be an orchard. In the following, whenever we speak of a \emph{neighbor}, it is with respect to $\mathsf{R}$ viewed as a graph. Given a vertical tree $T$, let $P$ be a horizontal path such that exactly one vertex of $V(P)\cap V(T)$ has a neighbor $v_1$ in $V(T) \setminus  V(P)$, and this neighbor is unique. Let $T'$ be the tree obtained from $T-V(P)$ by iteratively deleting the unique leaf that is not on any horizontal path other than $P$. Let $v_1,v_2,\ldots, v_{k-1}$ be the sequence of such leaves, and let $v_k$ denote the neighbor of $v_{k-1}$ in $V(T')$. Let $\mathsf{R}'$ be the orchard obtained from $\mathsf{R}$ by deleting $P$ and replacing $T$ with $T'$. Let $\mathsf{s}$ be the unique section of $\mathsf{R}'$ containing $v_k$. Every vertical section of $T$ in $\mathsf{R}$ which is not a vertical section of $T'$ in $\mathsf{R}'$ must be one of the following. (i) the path $v_1v_2\ldots v_{k-1}$ or (ii) $v_k$  or (iii)  one of the at most two components of $\mathsf{s} -v_k$. 
(We remark that situations (ii) and (iii) only apply if $\mathsf{s}$ is a vertical section of $\mathsf{R}'$ and $\mathsf{s} \neq v_k$.) By induction, there are at most $1+3(a-3)$ vertical sections of $\mathsf{R}'$ on $T'$. By the discussion above, $T$ has at most three more vertical sections.}

We define a \textit{myriapod} $C$ to be a tree of maximum degree at most $3$ such that all its degree $3$ vertices are on a single path $P$, called the \textit{spine} of $C$.
The components of $C-V(P)$ will be called the \textit{legs} of $C$.

We show that the sections of an orchard can be covered by few myriapods.

\begin{lemma}\label{lem:myriapodcover} 
  Let $\sR$ be an $a \times b$-orchard.
  There is a collection $\mathcal{C}$ of at most $a^2$ subgraphs of $\sR$ such that:
  \begin{itemize}
  \item every element of $\mathcal{C}$ is a myriapod whose spine is a
    horizontal path of $\sR$ and each of whose legs is contained in some vertical tree;
  \item every section of $\sR$ is contained in some element of $\mathcal{C}$.
  \end{itemize}
\end{lemma}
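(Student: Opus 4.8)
The plan is to construct, for each horizontal path $P_i$ of $\sR$, a single myriapod $C_i$ with spine $P_i$, and then take $\mathcal C=\{C_1,\dots,C_a\}$; this already has size $a\le a^2$, so all that remains is to ensure that every section of $\sR$ lies inside one of the $C_i$. Horizontal sections are automatic, since each is a subpath of some $P_i$, so the real task is to route the vertical sections into legs hanging off the spines. Fix a vertical tree $T_j$. By the orchard definition each $P_i\cap T_j$ is a nonempty path, so I contract each of them to a single vertex $q^j_i$, turning $T_j$ into a tree $T'_j$ with vertex set $\{q^j_1,\dots,q^j_a\}\cup\bigl(V(T_j)\setminus V(\mathsf P)\bigr)$. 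The key structural fact I will use is that \emph{every leaf of $T'_j$ is one of the vertices $q^j_i$}: if a leaf of $T'_j$ were some $v\in V(T_j)\setminus V(\mathsf P)$, then, since contracting the paths $P_i\cap T_j$ — none of which contains $v$ — cannot lower $\deg(v)$ (no parallel edges are created, as $T_j$ is a tree), $v$ would be a leaf of $T_j$ lying outside every horizontal path, contradicting the orchard definition. I also note that the vertical sections contained in $T_j$ are exactly the singletons $\{w\}$ with $w$ a non-$q$ vertex of $T'_j$ of degree at least $3$, together with the components of $T'_j$ obtained after deleting all the $q^j_i$ and all such $w$; each of the latter is a path, all its vertices having $T'_j$-degree at most $2$.

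Next I would root $T'_j$ at an arbitrary leaf $\rho$ — necessarily $\rho=q^j_{i_0}$ for some $i_0$ — and, for each edge $e$ of $T'_j$, let $A_e$ be the vertex set of the subtree hanging below $e$ and set $\mathrm{cl}(e):=\min\{i:q^j_i\in A_e\}$, which is well defined because $A_e$ contains a leaf. Along the path $q^j_i=z_0,z_1,\dots,z_t=\rho$, with edges $g_m=z_{m-1}z_m$, the subtrees satisfy $A_{g_1}\subseteq A_{g_2}\subseteq\cdots$ and all contain $q^j_i$, so $\mathrm{cl}(g_1)\ge\mathrm{cl}(g_2)\ge\cdots$ with every value at most $i$; hence $\{m:\mathrm{cl}(g_m)=i\}=\{1,\dots,s^j_i\}$ for some $s^j_i\ge 0$, and I write $R^j_i:=g_1\cdots g_{s^j_i}$, a (possibly empty) path whose first edge is incident to $q^j_i$. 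Conversely, every edge $e$ with $\mathrm{cl}(e)=i^\ast$ lies on $R^j_{i^\ast}$: its lower endpoint is an ancestor of $q^j_{i^\ast}$, so $e$ sits on the $q^j_{i^\ast}$-to-$\rho$ path, say $e=g_m$, and for $m'\le m$ one has $\mathrm{cl}(g_{m'})\le i^\ast$ (the subtree still contains $q^j_{i^\ast}$) and $\mathrm{cl}(g_{m'})\ge\mathrm{cl}(g_m)=i^\ast$, forcing $\mathrm{cl}(g_{m'})=i^\ast$ and $m\le s^j_{i^\ast}$. So $R^j_1,\dots,R^j_a$ partition the edge set of $T'_j$, and each $R^j_i$ is a path reaching $q^j_i$. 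This already accounts for all vertical sections of $T_j$: a path-type section is a chain of degree-$2$ non-$q$ vertices whose edges all have equal $\mathrm{cl}$ (two consecutive edges through such a vertex have equal $\mathrm{cl}$), so it lies on one $R^j_i$; and if $w$ is a degree-$\ge 3$ non-$q$ vertex, letting $e_w$ be the edge from $w$ to its parent and $i^\ast=\mathrm{cl}(e_w)$, both $e_w$ and one edge from $w$ toward $q^j_{i^\ast}$ have $\mathrm{cl}$ equal to $i^\ast$, so $w$ is an interior vertex of $R^j_{i^\ast}$.

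Then I would un-contract. Each edge of $T'_j$ is the image of a unique edge of $T_j$ (two pre-images would create a cycle in $T_j$), so the edges of $R^j_i$ lift to edges of $T_j$ which, after inserting for each interior vertex $z_m=q^j_{i'}$ of $R^j_i$ the unique (possibly trivial) subpath of $P_{i'}\cap T_j$ joining the two pre-image edges incident there, assemble into a path $\mathsf{leg}^j_i$ in $T_j$; I keep the pre-image $f^j_i$ of the first edge of $R^j_i$ as an \emph{attachment edge}, and set $C_i:=P_i\cup\bigcup_{j\in[b]}\bigl(\mathsf{leg}^j_i\cup f^j_i\bigr)$ (dropping the term for those $j$ with $R^j_i$ empty). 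The path $\mathsf{leg}^j_i$ is disjoint from $V(P_i)$ — each of its vertices is either a non-$q$ vertex of $T'_j$, hence outside $V(\mathsf P)$, or a vertex of some $P_{i'}\cap T_j$ with $i'\ne i$, hence outside $V(P_i)$ as the horizontal paths are pairwise disjoint — and it meets $P_i$ only through $f^j_i$, which joins it to a vertex of $P_i\cap T_j$. Since the $\mathsf{leg}^j_i$ lie in pairwise vertex-disjoint trees $T_j$, are pairwise disjoint and disjoint from $P_i$, and the attachment vertices lie in the pairwise disjoint paths $P_i\cap T_j$ and are therefore distinct, $C_i$ is a tree in which every vertex of $P_i$ has degree at most $3$ and every other vertex has degree at most $2$; thus $C_i$ is a myriapod with spine $P_i$ whose legs (the nonempty $\mathsf{leg}^j_i$) are contained in vertical trees. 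Every horizontal section lies in some $P_i\subseteq C_i$, and by the routing step every vertical section inside $T_j$ is carried by edges of some $R^j_i$, hence lies in $\mathsf{leg}^j_i\subseteq C_i$. So $\mathcal C=\{C_1,\dots,C_a\}$ works, and $|\mathcal C|=a\le a^2$.

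I expect the main difficulty to be the un-contraction step: checking carefully that $\mathsf{leg}^j_i$ is really a single path in $T_j$ — in particular when $R^j_i$ threads through interior vertices $q^j_{i'}$, so that the leg must run along a subpath of the horizontal path $P_{i'}$ — and that after this threading $C_i$ is still acyclic, has maximum degree $3$ with all its degree-$3$ vertices on $P_i$, and attaches to its spine $P_i$ only at vertices of the paths $P_i\cap T_j$, one per $j$. These are essentially tree-surgery bookkeeping checks, together with a few degenerate cases (a section carried by $R^j_i$ may be empty, $R^j_i$ may be a single edge, or $R^j_i$ may be empty). The only genuinely structural ingredient, the monotonicity of $\mathrm{cl}$ along root-to-leaf paths, is already isolated in the routing step, and it is exactly what forces each $R^j_i$ to be a contiguous path that reaches a contracted horizontal section — so its leg attaches to a spine — while making the $R^j_i$ partition all the edges, so that every vertical section gets covered.
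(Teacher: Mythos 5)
Your proposal is correct, and it takes a genuinely different (and quantitatively sharper) route from the paper's proof. The paper indexes its myriapods by ordered pairs $(P_i,P_j)$ of distinct horizontal paths and builds, for each pair, the myriapod with spine $P_i$ whose legs are, for every vertical tree $T$, the unique subpath of $T$ running from $P_i$ to $P_j$ with no interior vertex on $V(P_i)\cup V(P_j)$. Coverage of the vertical sections then follows because every vertex of a vertical tree lying off $V(\mathsf P)$ sits on such a connecting subpath (all leaves of $T$ are on horizontal paths, so removing a non-$\mathsf P$ vertex separates two of them). That gives $a(a-1)\le a^2$ myriapods via an essentially one-line construction. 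You instead build only $a$ myriapods, one per spine, paying for the saving with a routing argument: after contracting each $P_i\cap T_j$ to a vertex $q^j_i$ you partition $E(T'_j)$ into paths $R^j_1,\dots,R^j_a$ via the monotone labelling $\mathrm{cl}$, and then lift $R^j_i$ back (threading through the $P_{i'}\cap T_j$ at interior $q$-vertices) to a leg of $C_i$. The two structural facts you isolate --- every leaf of $T'_j$ is a $q$-vertex, and $\mathrm{cl}$ is non-increasing along root paths --- are exactly what make the $R^j_i$ contiguous paths that partition the edges and place every vertical section strictly interior to one of them. Both proofs are sound; yours buys a tighter constant ($a$ rather than $a(a-1)$) at the cost of the contraction and rooting machinery, neither of which is needed for the lemma's downstream use.
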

\begin{proof}
For each ordered pair of distinct horizontal paths $(P_i,P_j)$ in $\mathsf{R}$, we take $P_i$ and  extend it to a myriapod by adding to it the following legs. For each vertical tree $T$ in $\mathsf{R}$, we add the (unique) subpath $P(T,i,j)$ of $T$ that has endpoints in $P_i$ respectively $P_j$ but has no vertex of $V(P_i)\cup V(P_j)$ in its interior. By the uniqueness of the paths $P(T,i,j)$ and because vertical trees are vertex-disjoint, the resulting graph is a myriapod. There are less than $a^2$ ordered pairs of horizontal paths.
Since each horizontal section of $\mathsf{R}$ is contained in some horizontal path and each vertical section is contained in some connecting subpath $P(T,i,j)$, it follows that the constructed myriapods together cover all sections of $\mathsf{R}$.
\end{proof}

Recall that each vertical tree intersects each horizontal path in a
subpath and that these subpaths are disjoint. Thus, each horizontal
path $P$ defines two symmetric total orders on the vertical trees, which
are given by the order in which we meet these trees when following $P$
from one endpoint to the other.

We say that an $a \times b$-orchard $\mathsf{R}$ is \textit{tame} if its vertical
trees appear in the same order along every horizontal path.
Formally, $\mathsf{R}$ is tame if there is a permutation $\pi$ of
$[b]$ such that for every $i \in [a]$, we meet
the horizontal trees of $\mathsf{R}$ in the order $T_{\pi(1)}, \dots,
T_{\pi(b)}$, or the reverse order, when following $P_i$ from one endpoint to the other.

Given a horizontal section $\mathsf{t}$ of a horizontal path $P$ and a vertical tree $T$ in an orchard $\mathsf{R}$, we say that $\mathsf{t}$ is \emph{bordered} by $T$ if $\mathsf{t}$ does not intersect $T$ and, with respect to $P$ viewed as a graph, one of the endpoints of $\mathsf{t}$ has a neighbor which is a vertex of $T$. If additionally (given an ordering of $P$ `from left to right') there is such a neighbor to the left (right) of $\mathsf{t}$, then we say that $\mathsf{t}$ is bordered by $T$ on its left (right).

An orchard $\mathsf{R'}$ is a \textit{suborchard} of an orchard $\mathsf{R}$ if $\mathsf{R'}$ is obtained from $\mathsf{R}$ by selecting a subset of its horizontal paths and a subset of its vertical trees.

\begin{lemma}\label{lem:tameorchard} 
There exists a function $f_{\sref{lem:tameorchard}}(a,b)$ such that, for every $a, b \geq 1$, if $\mathsf{R}$ is an  $a \times f_{\sref{lem:tameorchard}}(a,b)$-orchard,
then $\mathsf{R}$ contains a tame $a\times b$-suborchard $\mathsf{R}'$.
\end{lemma}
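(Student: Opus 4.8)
The plan is to iterate the Erd\H{o}s--Szekeres Theorem (\autoref{thm:es}), once per horizontal path. Recall first that each horizontal path $P$ of an orchard meets each vertical tree in a subpath and that these subpaths are pairwise disjoint; hence $P$ visits the vertical trees in a linear order, well-defined up to reversal. For each $i \in [a]$ fix one of the two orders of $P_i$ and call it $\prec_i$. A suborchard obtained by keeping all $a$ horizontal paths and deleting some vertical trees is again an orchard (both orchard axioms are trivially preserved), and it is tame precisely when the restrictions of $\prec_1, \dots, \prec_a$ to its set of vertical trees all coincide up to reversal (the witnessing permutation being the one induced by $\prec_1$). So it suffices to find, inside a sufficiently large orchard, a set of $b$ vertical trees on which $\prec_1, \dots, \prec_a$ agree up to reversal.

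Define $g_0 = b$ and $g_{j+1} = (g_j-1)^2+1$ for $j \geq 0$, and set $f_{\sref{lem:tameorchard}}(a,b) = g_{a-1}$. Let $\mathsf{R}$ be an $a \times f_{\sref{lem:tameorchard}}(a,b)$-orchard, and let $S_1$ be its set of vertical trees, so $|S_1| = g_{a-1}$. We build a decreasing chain $S_1 \supseteq S_2 \supseteq \dots \supseteq S_a$ of sets of vertical trees with $|S_i| \geq g_{a-i}$ such that $\prec_1,\dots,\prec_i$ all coincide up to reversal on $S_i$; this holds vacuously for $S_1$. Given such an $S_i$ with $i < a$, list the elements of $S_i$ in the order $\prec_1$ and record, for each, its rank in the order $\prec_{i+1}$ restricted to $S_i$; since these ranks form a sequence of $|S_i| \geq g_{a-i} = (g_{a-i-1}-1)^2+1$ distinct integers, \autoref{thm:es} applied with $p=q=g_{a-i-1}$ yields a monotone subsequence of length $g_{a-i-1}$. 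Let $S_{i+1}$ be the corresponding set of vertical trees, so $|S_{i+1}| \geq g_{a-(i+1)}$. If the subsequence is increasing then $\prec_1$ and $\prec_{i+1}$ coincide on $S_{i+1}$, and if it is decreasing they are reverses of each other; together with $S_{i+1} \subseteq S_i$, the induction hypothesis, and the transitivity of ``coincide up to reversal'' on a fixed ground set, it follows that $\prec_1,\dots,\prec_{i+1}$ all coincide up to reversal on $S_{i+1}$.

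At the end $|S_a| \geq g_0 = b$ and $\prec_1, \dots, \prec_a$ all coincide up to reversal on $S_a$; taking any $b$ vertical trees of $S_a$ together with all $a$ horizontal paths gives a tame $a \times b$-suborchard, as wanted. (When $a=1$ there are no steps to perform and every $1 \times b$-orchard is already tame, consistently with $f_{\sref{lem:tameorchard}}(1,b)=g_0=b$.) This argument is essentially routine; the only points requiring a little care are that deleting vertical trees but no horizontal path keeps the object an orchard, and that the ambiguity between the two orders of each horizontal path is absorbed throughout by the phrase ``up to reversal'' --- so I do not anticipate a genuine obstacle here.
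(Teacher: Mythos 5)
Your proof is correct and takes essentially the same approach as the paper: iterating the Erd\H{o}s--Szekeres theorem once per horizontal path, yielding a bound doubly exponential in $a$. The paper organizes this as an induction on $a$ by deleting the last horizontal path (which forces a cleanup step, pruning vertical-tree leaves that no longer lie on a horizontal path), whereas you keep all $a$ horizontal paths and shrink only the set of vertical trees --- marginally cleaner, and giving the slightly sharper bound $(b-1)^{2^{a-1}}+1$ in place of the paper's $b^{2^{a-1}}$.
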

\begin{proof}
We claim that we may take $f_{\sref{lem:tameorchard}}(a,b):=b^{2^{a-1}}$.  The proof is by induction on $a$. Note that every  $1\times f_{\sref{lem:tameorchard}}(1,b)$-orchard is tame, and $f_{\sref{lem:tameorchard}}(1,b)=b$, so the claim holds for $a=1$.

For the inductive step, let $P_1, \dots, P_a$ be the horizontal paths
of $\mathsf{R}$ and let us consider the orchard obtained from $\mathsf{R}$ by ignoring $P_a$ and contracting some edges of the vertical trees so that the leaves of each vertical tree lie on $V(P_1) \cup \dots \cup V(P_{a-1})$. More precisely, from each vertical tree we iteratively delete the leaves that are not in $V(P_1)\cup \ldots \cup V(P_{a-1})$. Observe that $f_{\sref{lem:tameorchard}}(a,b)=f_{\sref{lem:tameorchard}}(a-1, b^2)$. Therefore, by induction, this orchard
contains a tame $(a-1)\times b^2$-suborchard
$\mathsf{R}^-$.

Let $T_1^-, \dots, T_{b^2}^-$ be the vertical trees of $\mathsf{R}^-$,
named according to the order in which they intersect $P_1$. 
Since $\mathsf{R}^-$  is tame, this is also the order in which they
intersect $P_i$ for all $i \in [a-1]$.  Let $T_1, \dots, T_{b^2}$ be
the corresponding trees in $\mathsf{R}$.
Choose one of the two possible orientations for $P_a$ arbitrarily and let $g(1), \dots, g(b^2)$ be
the order in which $T_1, \dots, T_{b^2}$  intersect $P_a$.  By
\autoref{thm:es}, $g(1), \dots, g(b^2)$ contains an increasing
or decreasing subsequence $g'(1), \dots, g'(b)$.  Let $T_1', \dots,
T_b'$ be the vertical trees of $\mathsf{R}$ corresponding to $g'(1),
\dots, g'(b)$.  By reversing the orientation of $P_a$ if necessary, we
obtain a tame $a \times b$ suborchard $\mathsf{R}'$ of $\mathsf{R}$,
as required.
\end{proof}

Using Lemmas~\ref{lem:myriapodcover} and~\ref{lem:tameorchard}, we now derive separation lemmas that will be key tools in the main proof.
These lemmas and those in \autoref{sec:packing_orchards} are all parameterized by some positive integer $m$.
In \autoref{sec:tech} we will apply these lemmas with the value $m = f_{\sref{th:gridminor}}(|H|)+1$.

Given two disjoint subsets $A,B$ of vertices of a graph $G$, we say that $A$ \textit{sees} $B$ if there is an edge in $G$ linking a vertex of $A$ to one of $B$.

\begin{lemma}\label{lem:separateorchard}
Let $m\in \mathbb{N}$. Suppose that $\mathsf{R}$ is an $a \times b$-orchard and $\mathsf{R'}$ is an $a' \times b'$-orchard vertex-disjoint from $\mathsf{R}$ in a graph $G$, with $a, a' \in [m]$.
Then, for each $c \geq 1$ at least one of the following holds.
\begin{itemize}
\item $G[V(\mathsf{R}) \cup V(\sR')]$ contains $2^{m}$ pairwise vertex-disjoint $(a+1) \times c$-orchards;
\item there exists $X \subseteq V(\sR')$ with $|X| \leq f_{\sref{lem:separateorchard}}(c,m):= \left(2 \cdot(2^m \cdot c)^{2^{m}} + 1\right)^2 \cdot m^6$ such that $V(\sR') \setminus X$ sees at most $g_{\sref{lem:separateorchard}}(c,m):= f_{\sref{lem:separateorchard}}(c,m)^2$ sections of the orchard $\mathsf{R}$.
\end{itemize}

\end{lemma}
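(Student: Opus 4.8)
The plan is to fix the larger orchard $\mathsf{R}'$ and analyze how much of $\mathsf{R}'$ needs to be deleted to disconnect it cleanly from $\mathsf{R}$. First I would apply \autoref{lem:myriapodcover} to $\mathsf{R}'$ to cover all its sections by at most $(a')^2 \le m^2$ myriapods, each with its spine a horizontal path of $\mathsf{R}'$ and legs inside vertical trees. Next, within each such myriapod I would apply \autoref{lem:tameorchard}-style reasoning (together with \autoref{thm:es}) to the pattern in which edges from $\mathsf{R}$ land on the sections of $\mathsf{R}'$: if many sections of $\mathsf{R}'$ inside a single myriapod each see $\mathsf{R}$, then by extracting a tame sub-structure and using the linear order along the spine I can build many vertex-disjoint $(a+1)\times c$-orchards of $G[V(\mathsf{R})\cup V(\mathsf{R}')]$ — the new $(a+1)$-st horizontal path being a piece of $\mathsf{R}$ threaded through $c$ consecutive sections of $\mathsf{R}'$, the $c$ vertical trees coming from the legs/sections of the myriapod. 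The threshold for "many" is $2^m\cdot c$ sections per myriapod (quantitatively, one needs roughly $(2^m c)^{2^m}$ to survive the Erd\H os--Szekeres extraction down to a tame block yielding $2^m$ disjoint orchards), so if \emph{every} myriapod has at least that many sections seeing $\mathsf{R}$ we land in the first outcome.

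Otherwise, some accounting gives a small set: if myriapod $C$ has fewer than $(2^m c)^{2^m}$ sections seeing $\mathsf{R}$, but the Erd\H os--Szekeres extraction still fails to produce the orchards even after deleting those sections, this forces the surviving sections to sit in a bounded-width pattern. More concretely, I would take $X$ to be the union, over all $\le m^2$ myriapods, of the vertices of all sections of $\mathsf{R}'$ that see $\mathsf{R}$, provided their number per myriapod is below the threshold; since each section is a path inside a tree, and each myriapod contributes $O((2^m c)^{2^m})$ such sections each of bounded relevant size after contracting, and each section can itself be charged to a bounded number of ``section endpoints'' — here I would use the footnoted bound that each vertical tree of an orchard with $a'$ horizontal paths defines at most $1+3(a'-2)<(a')^2\le m^2$ vertical sections — one gets $|X|\le f_{\sref{lem:separateorchard}}(c,m)$ with the stated value $\bigl(2(2^m c)^{2^m}+1\bigr)^2 m^6$. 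After deleting $X$, every remaining section of $\mathsf{R}'$ that sees $\mathsf{R}$ has been removed, so in fact it is the reverse direction I want: I must ensure $V(\mathsf{R}')\setminus X$ sees at most $g_{\sref{lem:separateorchard}}(c,m)$ sections \emph{of $\mathsf{R}$}, which I would derive by observing that each edge from $V(\mathsf{R}')\setminus X$ to $\mathsf{R}$ originates in a section of $\mathsf{R}'$ that was \emph{not} deleted, hence (by the failure of the first outcome on that myriapod) only few such sections of $\mathsf{R}'$ exist, and each can see only $O(1)$ sections of $\mathsf{R}$ before a short path through $\mathsf{R}$ between two of them would again complete an orchard — bounding the total by $f_{\sref{lem:separateorchard}}(c,m)^2$.

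Concretely, the key steps in order are: (1) cover the sections of $\mathsf{R}'$ by $\le m^2$ myriapods via \autoref{lem:myriapodcover}; (2) for each myriapod, look at the bipartite ``sees'' pattern between its sections and the sections of $\mathsf{R}$, and show that if it is large then — after restricting to a tame block along the spine using \autoref{thm:es} and grouping into $2^m$ consecutive chunks — one reads off $2^m$ pairwise vertex-disjoint $(a+1)\times c$-orchards (the extra horizontal path being carved from $\mathsf{R}$, the $c$ vertical trees from consecutive myriapod legs together with their continuations in $\mathsf{R}$); (3) if every myriapod has a small such pattern, set $X$ to be all vertices of the (few) sections of $\mathsf{R}'$ that see $\mathsf{R}$, bound $|X|$ using the per-tree section count $<m^2$ and the per-myriapod section bound $O((2^m c)^{2^m})$, and check the arithmetic gives the stated $f_{\sref{lem:separateorchard}}$; (4) conclude that $V(\mathsf{R}')\setminus X$ can only reach $\mathsf{R}$ through a bounded number of its own sections, hence sees at most $g_{\sref{lem:separateorchard}}(c,m)=f_{\sref{lem:separateorchard}}(c,m)^2$ sections of $\mathsf{R}$.

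The main obstacle I anticipate is step (2): making precise how ``many sections of $\mathsf{R}'$ seeing $\mathsf{R}$'' is converted into ``many vertex-disjoint $(a+1)\times c$-orchards.'' One must route a single horizontal path of the new orchard through $c$ consecutive sections of a myriapod of $\mathsf{R}'$ while keeping the $2^m$ copies vertex-disjoint, and simultaneously guarantee that this path, together with those sections (which already cross all $a$ horizontal paths of... — careful: the sections of $\mathsf{R}'$ cross $\mathsf{R}'$, not $\mathsf{R}$). The right picture is that the $a$ horizontal paths of $\mathsf{R}$ plus one new path built by concatenating edges of $\mathsf{R}$ with short detours through $\mathsf{R}'$ form the $a+1$ horizontal paths, and the vertical trees are pieces of $\mathsf{R}'$'s myriapod legs extended slightly into $\mathsf{R}$ to reach the new path; verifying that each such piece meets each of the $a+1$ horizontal paths in a single subpath, and that the tameness from \autoref{thm:es} lets us pick $c$ of them in convex position so the intersections are consecutive, is the delicate combinatorial heart. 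Managing vertex-disjointness across the $2^m$ copies — which is why the threshold carries the $2^m$ factor and the exponent $2^m$ — requires partitioning the surviving tame block into $2^m$ disjoint intervals and handling each independently.
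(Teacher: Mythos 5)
Your proposal has the roles of $\mathsf{R}$ and $\mathsf{R}'$ essentially inverted from how the construction actually has to go, and your choice of $X$ cannot be bounded in size. Both are serious.

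On the construction. You propose that the $c$ vertical trees of the new $(a+1)\times c$-orchard come from legs/sections of the $\mathsf{R}'$-myriapod, ``extended slightly into $\mathsf{R}$.'' But the vertical trees must meet \emph{every} horizontal path of the new orchard, and $a$ of those horizontal paths are the horizontal paths of $\mathsf{R}$, which lie entirely inside $V(\mathsf{R})$. A leg of $\mathsf{R}'$'s myriapod lives in $V(\mathsf{R}')$ and cannot reach them except through the one new path; ``extended slightly'' does not produce an object meeting all $a+1$ paths. The paper does the opposite: keep $\mathsf{R}$'s $a$ horizontal paths and $\mathsf{R}$'s vertical trees, take the spine (or a single leg) of a myriapod of $\mathsf{R}'$ as the \emph{new} horizontal path, and extend $\mathsf{R}$'s vertical trees by one edge each (or by an edge plus a detour through a leg of $C_{\mathsf{R}'}$, or along the spine $P^*$ of $C_{\mathsf{R}}$) so that each extended tree also touches this new horizontal path. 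Your half-correction at the end (``concatenating edges of $\mathsf{R}$ with short detours through $\mathsf{R}'$'') is still the wrong frame: the fix is not to build a hybrid horizontal path, it is to use $\mathsf{R}'$ as the single new horizontal path and route $\mathsf{R}$'s trees to it.

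On $X$. You propose $X$ to be the union of all vertices of those sections of $\mathsf{R}'$ that see $\mathsf{R}$. There is no bound on the \emph{length} of a section (a horizontal section of $\mathsf{R}'$ between two consecutive vertical trees can be an arbitrarily long path), so even if the \emph{number} of such sections per myriapod is at most $(2^m c)^{2^m}$, $|X|$ is unbounded. The lemma's conclusion bounds $|X|$ by a constant depending only on $c$ and $m$, so section-based deletion cannot work. The paper instead defines $X$ as the set of individual \emph{vertices} of $\mathsf{R}'$ that each see more than $f_{\sref{lem:separateorchard}}(c,m)$ sections of $\mathsf{R}$; this is a threshold on degree in a bipartite ``sees'' graph between $V(\mathsf{R}')$ and the sections of $\mathsf{R}$. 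The two-case split (either $|X|$ itself is large, giving a matching directly; or $X$ is small, and if $V(\mathsf{R}')\setminus X$ still sees $> g$ sections then low degrees give another large matching greedily) is what makes $|X|$ bounded by $f_{\sref{lem:separateorchard}}(c,m)$ and forces a matching of that size when the second outcome fails. This matching-extraction step is entirely absent from your plan, and it is the step that your myriapod/Erd\H{o}s--Szekeres machinery is supposed to be fed.

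In short: the myriapod decomposition and Erd\H{o}s--Szekeres/tame-suborchard ideas are the right toolbox, and you correctly anticipate that $2^m$ copies and the exponent $2^m$ come from splitting a tame block. But you need (i) to flip which orchard supplies the new horizontal path vs.\ the extended vertical trees, and (ii) to replace section-deletion in $\mathsf{R}'$ with the vertex-degree threshold and the resulting large matching in the bipartite ``sees'' graph between $V(\mathsf{R}')$ and the sections of $\mathsf{R}$.
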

\begin{proof}
  Let $\mathcal{S}$ denote the set of sections of $\mathsf{R}$.
  Consider the auxiliary bipartite graph $G_{\text{bip}}$ with vertex partition $(V(\mathsf{R'}), \mathcal{S})$ with the vertices of $\mathsf{R'}$ in one part and the sections $\mathcal{S}$ of $\mathsf{R}$ in the other part, such that $v\mathsf{s}$ is an edge in $G_{\text{bip}}$ if and only if $v \in V(\mathsf{R'})$ sees section $\mathsf{s} \in \mathcal{S}$ in~$G$.

Let $X \subseteq V(\mathsf{R'})$ denote the set of vertices of $\mathsf{R'}$ that see more than $f_{\sref{lem:separateorchard}}(c,m)$ sections of $\mathsf{R}$.   Suppose $|X|\geq f_{\sref{lem:separateorchard}}(c,m)$.
Then $G_{\text{bip}}$ contains a matching $M$ of size~$f_{\sref{lem:separateorchard}}(c,m)$.
Suppose on the other hand that $|X| \leq f_{\sref{lem:separateorchard}}(c,m)$.
If $V(\mathsf{R'}) \setminus X$ sees at most $g_{\sref{lem:separateorchard}}(c,m)$ sections of $\mathsf{R}$, then we are done. Thus we may suppose that $V(\mathsf{R'}) \setminus X$ sees more than $g_{\sref{lem:separateorchard}}(c,m)$ sections of $\mathsf{R}$.
By definition of $X$, each vertex in $V(\mathsf{R'}) \setminus X$ sees at most $f_{\sref{lem:separateorchard}}(c,m)$ sections of $\mathsf{R}$.
Hence, there exists a matching of size $g_{\sref{lem:separateorchard}}(c,m) / f_{\sref{lem:separateorchard}}(c,m) = f_{\sref{lem:separateorchard}}(c,m)$ between $V(\mathsf{R'}) \setminus X$ and the sections of~$\mathsf{R}$.

Thus in both cases, $G_{\text{bip}}$ contains a matching $M$ of size~$f_{\sref{lem:separateorchard}}(c,m)$. From this fact we will derive that $G[V(\sR) \cup V(\sR')]$ contains $2^m$ pairwise vertex-disjoint $(a+1) \times c$-orchards.

By \autoref{lem:myriapodcover} applied to $\mathsf{R}$ and by the pigeonhole principle, there is a myriapod $C_{\mathsf{R}}$ in $\mathsf{R}$ such that:
\begin{itemize}
\item the spine of $C_{\mathsf{R}}$ is a horizontal path of $\mathsf{R}$ and each leg of $C_{\mathsf{R}}$ is a subgraph of a vertical tree of $\mathsf{R}$; and
\item at least $\frac{1}{a^2} f_{\sref{lem:separateorchard}}(c,m)$ sections matched by $M$ are contained in $C_{\mathsf{R}}$.
\end{itemize}
If $a \geq 2$, then each leg of $C_{\mathsf{R}}$ is a subgraph of a vertical tree of $\mathsf{R}$ and hence contains at most $1 + 3(a-2) < a^2$ sections.
(If $a=1$, then $C_{\mathsf{R}}$ has no legs.)
It follows that there is a submatching $M_2 \subseteq M$ of size at least $\frac{1}{a^4} \cdot f_{\sref{lem:separateorchard}}(c,m)$ such that the sections of $\mathsf{R}$ matched by $M_2$ are

\begin{enumerate}[label = (\alph*)]\label{enum:matchingsectionsofH}
\item all on the spine of $C_{\mathsf{R}}$, or  \label{enum:all_spine_H}
\item on distinct legs of $C_{\mathsf{R}}$. \label{enum:distinct_legs_H}
\end{enumerate}

By \ref{enum:distinct_legs_H} we mean that each section matched by $M_2$ is on some leg of $C_{\mathsf{R}}$ and no two such sections are on the same leg of $C_{\mathsf{R}}$.

We can apply a similar reduction to the vertices of $\mathsf{R'}$ matched by $M_2$.
By \autoref{lem:myriapodcover}, the vertices of $\mathsf{R'}$ can be covered with at most $a'^2$ (recall $a'$ is the number of horizontal paths of
$\mathsf{R}'$) myriapods whose spines are horizontal paths of $\mathsf{R'}$ and each of whose legs is a subgraph of a vertical tree of $\mathsf{R'}$.
Thus there is such a myriapod $C_{\mathsf{R'}}$ in $\mathsf{R'}$ such that at least $\frac{1}{a'^2} \cdot \frac{1}{a^4} \cdot f_{\sref{lem:separateorchard}}(c,m)$ vertices of $C_{\mathsf{R'}}$ are matched by $M_2$.

Next, we claim that we can find a submatching $M_3\subseteq M_2$ of size at least
\[
\sqrt{\frac{1}{m^6} \cdot f_{\sref{lem:separateorchard}}(c,m)} =  2 \cdot (2^m\cdot c)^{2^{m}} + 1
\]
such that the vertices of $\mathsf{R'}$ matched by $M_3$ are
\begin{enumerate}[label = (\arabic*)]\label{enum:matchingverticesofHprime}
\item all on the spine of $C_{\mathsf{R'}}$, or \label{enum:all_spine_Hp}
\item on distinct legs of $C_{\mathsf{R'}}$, or \label{enum:distinct_legs_Hp}
\item all on a single leg of $C_{\mathsf{R'}}$. \label{enum:single_leg_Hp}
\end{enumerate}
This can be seen as follows.
Let $y:= \frac{1}{m^6} \cdot f_{\sref{lem:separateorchard}}(c,m)$.
As $a,a'\leq m$, the myriapod  $C_{\mathsf{R'}}$ has at least $y$ vertices matched by $M_2$.
A \emph{part} is the spine or a leg of $C_{\mathsf{R'}}$.
If some part of $C_{\mathsf{R'}}$ contains at least $\sqrt{y}$ matched vertices, then \ref{enum:all_spine_Hp} or \ref{enum:single_leg_Hp} holds, and we are done.
Otherwise, strictly more than $y / \sqrt{y} = \sqrt{y}$ parts have at least one matched vertex.
Since $\sqrt{y}$ is an integer, there are at least $\sqrt{y}+1$ such parts.
By possibly discarding the spine, we obtain $\sqrt{y}$ distinct legs each having a matched vertex, and thus \ref{enum:distinct_legs_Hp} holds.

We now extend the $a \times b$-orchard $\mathsf{R}$ to an $(a+1)\times \left( (2^m \cdot c)^{2^{m}} \right)$-orchard, as follows.  As the $(a+1)$-th horizontal path of the new orchard we take (in case~\ref{enum:all_spine_Hp} and~\ref{enum:distinct_legs_Hp}) the spine of $C_{\mathsf{R'}}$ or (in case~\ref{enum:single_leg_Hp}) the leg of $C_{\mathsf{R'}}$ that is matched by $M_3$. For each edge $e=v\mathsf{s}$ in $M_3$ we choose an edge in the original graph $G$, which has endpoints $v\in V(\mathsf{R'})$ and some vertex on section $\mathsf{s}$. In case~\ref{enum:all_spine_Hp} and~\ref{enum:single_leg_Hp} we call this edge $r(e)$. In case~\ref{enum:distinct_legs_Hp}, by using the leg $\ell$ of $C_{\mathsf{R'}}$ it intersects, we extend this edge to a path with an endpoint on the spine of $C_{\mathsf{R'}}$ and all its internal vertices on $\ell$. We also call this new path $r(e)$. After this, $r(e)$ has one endpoint on the chosen $(a+1)$-th horizontal path.

Given two subgraphs $F$ and $F'$ of $G$, we write $F \cup F'$ for the graph with vertices $V(F)\cup V(F')$ and edges $E(F)\cup E(F')$. 

In case~\ref{enum:distinct_legs_H}, the other endpoint of $r(e)$ is on a vertical tree $T(e)$ of $\mathsf{R}$. We extend $T(e)$ to a larger vertical tree $T(e) \cup  r(e)$. For distinct edges $e_1,e_2 \in M_3$, the vertical trees $T(e_1)$ and $T(e_2)$ are distinct and thus the extended vertical trees $T(e_1) \cup r(e_1)$ and $T(e_2)\cup r(e_2)$ are still vertex-disjoint. In this way we obtain $|M_3| \geq (2^m \cdot c)^{2^{m}}$ extended vertex-disjoint vertical trees that each intersect our chosen extra horizontal path. Thus we have constructed an $(a+1)\times |M_3|$-orchard, which contains an $(a+1)\times \left( (2^m \cdot c)^{2^{m}} \right)$-suborchard.

In case~\ref{enum:all_spine_H}, we do almost the same. The difference is that the $C_{\mathsf{R}}$-endpoint $v(e)$ of $r(e)$ is possibly not on a vertical tree.  In that case, in order to appropriately extend the vertical trees, we need to add some subpaths of the spine $P^*$ of $C_{\mathsf{R}}$. In doing that, we need to take care that the extended vertical trees are still vertex-disjoint. One can do this by ordering the vertices of $P^*$ `from left to right'. If $v(e)$ intersects a vertical tree, then we extend the tree as before. If $v(e)$ does not intersect a vertical tree, then we consider a tree $T(e)$ that has the closest intersection point with $P^*$  to the left of $v(e)$. There may exist (at most) one $e \in M_3$ such that $v(e)$ has no vertical tree strictly to its left. In that case we drop $e$ from $M_3$.
Next, we extend $T(e)$ to $T(e)\cup r(e)\cup p(e)$, where $p(e)$ is the smallest subpath of $P^*$ containing both $v(e)$ and a vertex of $T(e) \cap V(C_{\mathsf{R}})$.
Since each $r(e)$ meets a unique section of the horizontal path $P^*$ and since for every vertical tree $T$ there exist at most two horizontal sections of $P^*$ that intersect $T$ or are bordered by $T$ on their left, this ordering guarantees that at least half of the extended vertical trees remain pairwise vertex-disjoint. We thus obtain an $(a+1)\times \left \lfloor \tfrac{|M_3|-1}{2} \right \rfloor $-orchard, which contains a suborchard of the desired size since $|M_3| \geq  2\cdot (2^m \cdot c)^{2^{m}} +1$.

Note that the $(a+1)\times \left( (2^m \cdot c)^{2^{m}} \right)$-orchard that we have constructed in both cases is contained in $G[V(\mathsf{R}) \cup V(\mathsf{R'})]$. By \autoref{lem:tameorchard}, it contains a \textit{tame} $(a+1) \times \left(2^m \cdot c \right)$-suborchard, which  straightforwardly can be split into $2^m$ pairwise vertex-disjoint $(a+1) \times c$-orchards.
\end{proof}

We say that a subset $A$ of vertices of a graph $G$ \textit{reaches} a section $\mathsf{s}$ of an orchard $\mathsf{R}$ in $G$ if $G$ has a path from $A$ to $\mathsf{s}$ having no internal vertex in the orchard $\mathsf{R}$.

\begin{lemma}\label{lem:separateorchard_extended}
Let $m\in \mathbb{N}$. Suppose that $\mathsf{R}$ is an $a \times b$-orchard and $\mathsf{R'}$ is an $a' \times b'$-orchard vertex-disjoint from $\mathsf{R}$ in a graph $G$, with $a, a' \in [m]$.
Then, for each $c \geq 1$, at least one of the following holds.

\begin{itemize}
\item  $G$ contains $2^{m}$ pairwise vertex-disjoint $(a+1) \times c$-orchards;
\item there exists $X \subseteq V(G) \setminus V(\mathsf{R})$ with $|X| \leq f_{\sref{lem:separateorchard}}(c,m) + g_{\sref{lem:separateorchard}}(c,m)$ such that $V(\mathsf{R'}) \setminus X$ reaches at most $f_{\sref{lem:separateorchard}}(c,m) + g_{\sref{lem:separateorchard}}(c,m)$ sections of the orchard $\mathsf{R}$ in $G-X$.
\end{itemize}
\end{lemma}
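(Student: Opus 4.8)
The plan is to reduce to \autoref{lem:separateorchard} through a Menger-type dichotomy applied to the component of $G-V(\sR)$ containing $\sR'$. Throughout write $f:=f_{\sref{lem:separateorchard}}(c,m)$ and $g:=g_{\sref{lem:separateorchard}}(c,m)$. Since every horizontal path of $\sR'$ meets every vertical tree of $\sR'$, the subgraph $\sR'$ is connected, and as it is vertex-disjoint from $\sR$ it lies inside a single component $C$ of $G-V(\sR)$; note that any path witnessing that $V(\sR')$ reaches a section of $\sR$ in $G$ has all of its vertices except the last one inside $C$. First I would form an auxiliary graph $C^{+}$: take $C$, add a new vertex $z$ joined to every vertex of $V(\sR')$, add for each section $\mathsf{s}$ of $\sR$ a new vertex $t_{\mathsf{s}}$ joined to exactly those vertices of $C$ having a $G$-neighbour on $\mathsf{s}$, and add a new vertex $z'$ joined to all the $t_{\mathsf{s}}$. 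Let $\lambda$ be the maximum number of internally vertex-disjoint $z$--$z'$ paths in $C^{+}$; since $z$ and $z'$ are non-adjacent, Menger's theorem says $\lambda$ equals the minimum size of a set $X^{+}\subseteq V(C^{+})\setminus\{z,z'\}$ meeting every $z$--$z'$ path. The dichotomy is according to whether $\lambda\le f+g$.

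If $\lambda\le f+g$, fix a separating set $X^{+}$ of size at most $f+g$ and put $X:=X^{+}\cap V(C)$ and $\mathcal{S}_{\mathrm{rm}}:=\{\mathsf{s}:t_{\mathsf{s}}\in X^{+}\}$, so that $|X|+|\mathcal{S}_{\mathrm{rm}}|\le f+g$. I claim that in $G-X$ the set $V(\sR')\setminus X$ reaches only sections of $\sR$ belonging to $\mathcal{S}_{\mathrm{rm}}$: a path in $G-X$ from $V(\sR')\setminus X$ to a section $\mathsf{s}\notin\mathcal{S}_{\mathrm{rm}}$ internally avoiding $\sR$ would have all but its last vertex in $C\setminus X$ and its penultimate vertex $G$-adjacent to $\mathsf{s}$, so prefixing $z$ and appending $t_{\mathsf{s}},z'$ would give a $z$--$z'$ path of $C^{+}$ avoiding $X^{+}$, a contradiction. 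Hence $X\subseteq V(G)\setminus V(\sR)$ has size at most $f+g$ and $V(\sR')\setminus X$ reaches at most $|\mathcal{S}_{\mathrm{rm}}|\le f+g$ sections of $\sR$, which is the second outcome.

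If instead $\lambda\ge f+g+1$, pick $f+g+1$ internally vertex-disjoint $z$--$z'$ paths of minimum total length. Minimality forces each of them to have the form $z,v_{0}^{i},\dots,u_{i},t_{\mathsf{s}_{i}},z'$ with $v_{0}^{i}\in V(\sR')$, with $\bar Q_{i}:=v_{0}^{i}\cdots u_{i}$ a path inside $C$ meeting $V(\sR')$ only at $v_{0}^{i}$, and with $u_{i}$ being $G$-adjacent to a vertex $w_{i}$ of the section $\mathsf{s}_{i}$; moreover the $v_{0}^{i}$ are pairwise distinct, the $\mathsf{s}_{i}$ are pairwise distinct, and the $\bar Q_{i}$ are pairwise vertex-disjoint. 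Let $Q_{i}^{\ast}$ be the path obtained from $\bar Q_{i}$ by appending the edge $u_{i}w_{i}$; then the $Q_{i}^{\ast}$ are pairwise vertex-disjoint and the interior of each lies in $V(C)\setminus(V(\sR)\cup V(\sR'))$. Now let $\hat G$ be the graph with vertex set $V(\sR)\cup V(\sR')$ whose edges are the edges of $G$ inside $V(\sR)\cup V(\sR')$ together with the shortcut edges $v_{0}^{i}w_{i}$; then $\sR$ and $\sR'$ are vertex-disjoint orchards of $\hat G$ with at most $m$ horizontal paths each. Applying \autoref{lem:separateorchard} to $\sR,\sR'$ in $\hat G$ with the same $c$, the second outcome is impossible: for every $X_{0}\subseteq V(\sR')$ with $|X_{0}|\le f$, at least $g+1$ of the distinct vertices $v_{0}^{i}$ avoid $X_{0}$, so $V(\sR')\setminus X_{0}$ sees at least $g+1$ distinct sections of $\sR$ via the shortcut edges. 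Hence $\hat G$ contains $2^{m}$ pairwise vertex-disjoint $(a+1)\times c$-orchards. Finally I would lift these to $G$: a shortcut edge $v_{0}^{i}w_{i}$ is used by at most one of the $2^{m}$ orchards (they are pairwise vertex-disjoint), and replacing each used shortcut edge by the corresponding path $Q_{i}^{\ast}$ — whose internal vertices are fresh, lie in $C$, and are disjoint across distinct $i$ — turns each orchard into a genuine $(a+1)\times c$-orchard of $G$, still pairwise vertex-disjoint. This gives the first outcome.

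The main obstacle, I expect, lies in the bookkeeping of the second case. On the one hand, one must verify that a minimum-total-length family of $z$--$z'$ paths really yields the $\bar Q_{i}$ as described — that no such path revisits $\sR'$ after its first step and that none passes through a gadget vertex $t_{\mathsf{s}}$ except at its end — using that truncating or shortcutting a path leaves the family internally disjoint. On the other hand, one must check that substituting a path of fresh degree-two vertices for an edge of an orchard, whether the edge lies on a horizontal path, inside a vertical tree, or inside an intersection $P_{i}\cap T_{j}$, preserves every defining property of an orchard and does not destroy pairwise disjointness of the $2^{m}$ orchards. Both points are routine but require care; everything else is a direct application of Menger's theorem and of \autoref{lem:separateorchard}.
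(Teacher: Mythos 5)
Your proposal is correct and follows essentially the same route as the paper's proof. The paper implements the Menger dichotomy by contracting each section of $\mathsf{R}$ to a single vertex and running Menger between the contracted set and $V(\mathsf{R'})$, whereas you set up source/sink gadget vertices $z,z'$ and $t_{\mathsf{s}}$; this is a cosmetic repackaging of the same min-cut argument, and in both cases the large side of the dichotomy is handled identically — extract $f+g+1$ paths with endpoints in distinct sections, contract each to an edge (your shortcut edges), apply \autoref{lem:separateorchard}, rule out its second outcome via the resulting matching, and subdivide the contracted edges back to lift the $2^m$ orchards to $G$.
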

\begin{proof}
Let $\mathcal{S}$ denote the set of sections of $\mathsf{R}$.
Recall that they are by definition vertex-disjoint.
Let $G'$ be the minor of $G$ obtained by contracting each path $\mathsf{s} \in \mathcal{S}$ into a single vertex, which we denote by $\bar{\mathsf{s}}$. Correspondingly, we write $\bar{\mathcal{S}}:= \left\{\bar{\mathsf{s}} \mid \mathsf{s} \in \mathcal{S}  \right\}$ for the set of contracted vertices in $V(G')$.

{\bf Case~1: There are $f_{\sref{lem:separateorchard}}(c,m) + g_{\sref{lem:separateorchard}}(c,m) + 1$ vertex-disjoint paths between $\bar{\mathcal{S}}$ and $V(\mathsf{R'})$ in $G'$.}
Then $G$ has a collection $\mathcal{M}$ of $f_{\sref{lem:separateorchard}}(c,m) + g_{\sref{lem:separateorchard}}(c,m) + 1$ vertex-disjoint paths, each having one endpoint in $V(\mathsf{R})$ and the other endpoint in $V(\mathsf{R'})$ and having no internal vertices in these two sets, such that the endpoints in $V(\mathsf{R})$ all belong to distinct sections of $\mathsf{R}$.

Let $G^*$ be obtained from the subgraph $\mathsf{R} \cup \mathsf{R'} \cup \bigcup_{P \in \mathcal{M}}P$ of $G$ by contracting each path $P\in \mathcal{M}$ into an edge joining its two endpoints.
Let $M^*$ denote the set of edges resulting from the contractions of the paths.
Thus $M^*$ is a matching.
Now, apply \autoref{lem:separateorchard} on $G^*$ with orchards $\mathsf{R}$ and $\mathsf{R'}$.
Since $|M^*| \geq f_{\sref{lem:separateorchard}}(c,m) + g_{\sref{lem:separateorchard}}(c,m) + 1$, the matching $M^*$ shows that the second outcome of that lemma is not possible.
Hence, we deduce that $G^*$ contains $2^{m}$ pairwise vertex-disjoint $(a+1) \times c$-orchards.
Replacing each edge of $M^*$ used in these orchards by the corresponding path in $\mathcal{M}$, we see that $G$ also has $2^{m}$ pairwise vertex-disjoint $(a+1) \times c$-orchards.

{\bf Case~2: There are at most $f_{\sref{lem:separateorchard}}(c,m) + g_{\sref{lem:separateorchard}}(c,m)$ vertex-disjoint paths between $\bar{\mathcal{S}}$ and $V(\mathsf{R'})$ in $G'$.}
By Menger's theorem, there is a subset $Z \subseteq V(G')$ of vertices with $|Z| \leq f_{\sref{lem:separateorchard}}(c,m) + g_{\sref{lem:separateorchard}}(c,m)$ separating $\bar{\mathcal{S}}$ from $V(\mathsf{R'})$ in $G'$. 
Let $\bar{\mathcal{S}'} := Z\cap \bar{\mathcal{S}}$ and $X:= Z \setminus \bar{\mathcal{S'}}$. 
Furthermore, let $\mathcal{S}'$ be the set of sections of $\mathcal{S}$ corresponding to vertices of $Z$. That is, let $\mathcal{S}':= \left\{ \mathsf{s} \mid \bar{\mathsf{s}} \in \bar{\mathcal{S'}} \right\}$.
Then, in the graph $G-X$, every path from $V(\mathsf{R'}) \setminus X$ to $V(\mathsf{R})$ enters $V(\mathsf{R})$ in a vertex belonging to some section $\mathsf{s}\in \mathcal{S'}$.
Thus, $V(\mathsf{R'}) \setminus X$ reaches at most $| \mathcal{S'}|$ sections of the orchard $\mathsf{R}$ in $G-X$.
Since $|X| \leq |Z|$ and $|\mathcal{S'}| = |\bar{\mathcal{S'}}|\leq |Z|$, and $|Z| \leq f_{\sref{lem:separateorchard}}(c,m) + g_{\sref{lem:separateorchard}}(c,m)$, the set $X$ has the desired properties.
\end{proof}

\begin{lemma}\label{lem:separatepathmix}
Let $m \in \mathbb{N}$. Suppose that $\mathsf{R}$ is an $a \times b$-orchard, with $a\in[m]$, in a graph $G$ and that $\mathsf{s}$ is a section of $\mathsf{R}$.
Then, for each $c \geq 1$, at least one of the following holds.
\begin{itemize}
\item$G$ contains $2^{m}$ pairwise vertex-disjoint $(a+1)
  \times c$-orchards;
\item there exists $X \subseteq V(\mathsf{s}) \cup (V(G) \setminus V(\mathsf{R}))$ with $|X| \leq 5f_{\sref{lem:separateorchard}}(c,m) + 5g_{\sref{lem:separateorchard}}(c,m)$ such that $V(\mathsf{s}) \setminus X$ reaches at most $5f_{\sref{lem:separateorchard}}(c,m) + 5g_{\sref{lem:separateorchard}}(c,m)$ sections of $\mathsf{R}$ distinct from $\mathsf{s}$ in $G-X$.
\end{itemize}
\end{lemma}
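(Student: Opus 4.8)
The plan is to reduce everything to \autoref{lem:separateorchard_extended}, applied with $\mathsf{s}$ itself (a single path, hence a $1\times 1$-orchard) as the second orchard. The obstruction is that $\mathsf{s}\subseteq V(\mathsf{R})$, so one first has to replace $\mathsf{R}$ by an orchard $\widehat{\mathsf{R}}$ that (a)~still has exactly $a$ horizontal paths, so that the first outcome of \autoref{lem:separateorchard_extended} produces $(a{+}1)\times c$-orchards; (b)~is vertex-disjoint from $\mathsf{s}$; and (c)~has essentially the same sections as $\mathsf{R}$, so that a bound on the sections of $\widehat{\mathsf{R}}$ reached by $V(\mathsf{s})$ transfers back to $\mathsf{R}$. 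Write $f:=f_{\sref{lem:separateorchard}}(c,m)$ and $g:=g_{\sref{lem:separateorchard}}(c,m)$; since $f\ge m^6$, any $O(m^2)$ count is absorbed into $f+g$. I would split according to the type of $\mathsf{s}$.

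\textbf{The vertical case.}
If $\mathsf{s}$ is a vertical section or a single-vertex ($W$-)section, it lies in $V(T_j)\setminus V(\mathsf{P})$ for a unique vertical tree $T_j$. If $b=1$ then $\mathsf{R}$ has only $O(m^2)$ sections in total and the second outcome holds with $X=\emptyset$; otherwise take $\widehat{\mathsf{R}}=\mathsf{R}-T_j$, an $a\times(b{-}1)$-orchard vertex-disjoint from $\mathsf{s}$, and apply \autoref{lem:separateorchard_extended} to $\widehat{\mathsf{R}}$ and $\mathsf{s}$. The first outcome is what we want. In the second we get $X'\subseteq V(G)\setminus V(\widehat{\mathsf{R}})$ with $|X'|\le f+g$ such that $V(\mathsf{s})\setminus X'$ reaches at most $f+g$ sections of $\widehat{\mathsf{R}}$ in $G-X'$. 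Since $V(\mathsf{R})\setminus V(\widehat{\mathsf{R}})=V(T_j)\setminus V(\mathsf{P})$ is covered by $\mathsf{s}$ and the $O(m^2)$ other vertical/$W$-sections on $T_j$, I would set $X:=X'\cap\bigl(V(\mathsf{s})\cup(V(G)\setminus V(\mathsf{R}))\bigr)$, which is of the allowed type with $X'\setminus X$ lying on $T_j$. Deleting the single tree $T_j$ merges at most three sections of $\mathsf{R}$ into one section of $\widehat{\mathsf{R}}$ (a section with at most its two neighbours on each horizontal path), so any path in $G-X$ witnessing that $V(\mathsf{s})\setminus X$ reaches a section $\mathsf{t}\ne\mathsf{s}$ of $\mathsf{R}$ disjoint from $V(T_j)$ also lies in $G-X'$ and witnesses that $V(\mathsf{s})\setminus X'$ reaches the section of $\widehat{\mathsf{R}}$ containing its endpoint; hence there are at most $3(f+g)$ such $\mathsf{t}$, and adding the $O(m^2)$ sections meeting $T_j$ keeps the total below $5f+5g$, while $|X|\le f+g\le 5f+5g$. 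This settles this case.

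\textbf{The horizontal case.}
If $\mathsf{s}\subseteq P_i$ is a horizontal section, write $P_i=L\cdot \mathsf{s}\cdot R$. Every other vertical tree meets $P_i$ in a subpath disjoint from $\mathsf{s}$, hence lying entirely in $L$ or entirely in $R$; let $B_L,B_R$ be the two resulting tree sets, and (if $\mathsf{s}=P_i\cap T_j$ is a tree section) let $T_j$ be its tree, which belongs to neither. When $B_L\ne\emptyset$, form the $a\times|B_L|$-orchard $\mathsf{R}^L$ with horizontal paths $\{P_{i'}:i'\ne i\}\cup\{L\}$ and vertical trees $B_L$; it is vertex-disjoint from $\mathsf{s}$, and symmetrically one forms $\mathsf{R}^R$. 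Apply \autoref{lem:separateorchard_extended} to $(\mathsf{R}^L,\mathsf{s})$ and to $(\mathsf{R}^R,\mathsf{s})$: if either gives the large orchards we are done, and otherwise we obtain hitting sets $X_L,X_R$ of size at most $f+g$ each. I would take $X$ to be the union of the allowed parts of $X_L\cup X_R$. Now the sections of $\mathsf{R}$ on $L$, the tree sections on $B_L$-trees, and the vertical sections on $B_L$-trees are honest sections of $\mathsf{R}^L$ and transfer directly (symmetrically for $R$, $B_R$); the $O(m)$ tree sections and $O(m^2)$ vertical sections on $T_j$, together with the $O(m)$ sections reachable from $\mathsf{s}$ along a single edge of $T_j$ — one per component of $T_j-V(\mathsf{s})$ — are absorbed into the $O(m^2)$ slack.

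\textbf{The main obstacle.}
The hard part is completing this transfer, in the horizontal case, for gap sections lying on some $P_{i''}$ with $i''\ne i$. Unlike when one deletes a whole tree, $\mathsf{R}^L$ has lost \emph{all} the trees $B_R$ (and $T_j$) that also crossed $P_{i''}$, so a gap section of $\mathsf{R}^L$ on $P_{i''}$ can merge arbitrarily many sections of $\mathsf{R}$: a gap of $\mathsf{R}$ on $P_{i''}$ is a genuine section of $\mathsf{R}^L$ only if both of its neighbouring tree sections lie in $B_L$, and of $\mathsf{R}^R$ only if both lie in $B_R$. The remaining ``mixed'' gaps, those sitting between a $B_L$-tree and a $B_R$-tree (or $T_j$) along $P_{i''}$, must be bounded separately: one has to show that only boundedly many of them can be reached from $V(\mathsf{s})\setminus X$ — and it is precisely here that the room up to the constant $5$ is spent (possibly via a further application of \autoref{lem:separateorchard_extended} and by enlarging $X$ within its allowed set $V(\mathsf{s})\cup(V(G)\setminus V(\mathsf{R}))$). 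Once the mixed gaps are controlled, summing the contributions — at most $f+g$ transferred through each of $\mathsf{R}^L,\mathsf{R}^R$ (with a bounded multiplicity from the at-most-threefold merging), the $O(m^2)$ sections on $T_j$, and the controlled mixed gaps — gives at most $5f+5g$ reached sections, while $|X|\le 5f+5g$, as required.
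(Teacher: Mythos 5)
Your overall route --- splitting into a ``vertical'' case and a ``horizontal'' case, handling the former by deleting the tree carrying $\mathsf{s}$, and handling the latter by forming two orchards $\mathsf{R}^L,\mathsf{R}^R$ from the trees anchored to the left and right of $\mathsf{s}$ and applying \autoref{lem:separateorchard_extended} to each --- is exactly the paper's route, and your vertical case is essentially correct. But in the horizontal case you have correctly identified the obstruction (the ``mixed'' gap sections on paths $P_{i''}$ with $i''\neq i$) and then explicitly left it unresolved, writing that ``one has to show that only boundedly many of them can be reached'' and guessing that yet another application of \autoref{lem:separateorchard_extended} might be needed. That is a genuine gap: you have flagged the hard step without supplying the argument, and the ``at-most-threefold merging'' multiplicity you invoke is the one that you yourself already observed does \emph{not} hold here, since a gap of $\mathsf{R}^L$ on a foreign horizontal path can absorb arbitrarily many sections of $\mathsf{R}$.

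The paper closes this gap with no further application of \autoref{lem:separateorchard_extended}; the missing idea is a short counting observation. Call a horizontal section $\mathsf{t}$ of $\mathsf{R}$ \emph{mixed} if it is not a section of $\mathsf{R}^L$, $\mathsf{R}^R$, or of the small auxiliary orchard $\mathsf{R}_T$ built from $T_j$ and its bordering sections (that last family has only $O(m^2)$ sections and is swallowed by the slack). A mixed $\mathsf{t}$ is a gap bordered on one side by a $B_L$-tree and on the other by a $B_R$-tree; hence, inside any single horizontal section $\mathsf{t}^*$ of $\mathsf{R}^L$ on $P_{i''}$, which is delimited by two consecutive $B_L$-trees, the only mixed sections are those touching those two delimiting trees --- at most two. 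Moreover, if $V(\mathsf{s})\setminus X$ reaches a mixed $\mathsf{t}$ in $G-X$ by a path internally avoiding $\mathsf{R}$, then the same path internally avoids $\mathsf{R}^L$ and avoids $X_L\subseteq X$, so $V(\mathsf{s})\setminus X_L$ reaches the section $\mathsf{t}^*$ of $\mathsf{R}^L$ containing $\mathsf{t}$ in $G-X_L$. Since at most $f+g$ sections of $\mathsf{R}^L$ are reached and each hosts at most two mixed sections of $\mathsf{R}$, the mixed sections reached via $\mathsf{R}^L$ number at most $2(f+g)$, and symmetrically for $\mathsf{R}^R$; together with the $\leq f+g$ honest sections reached through each of $\mathsf{R}^L,\mathsf{R}^R$ and the $O(m^2)$ sections of $\mathsf{R}_T$, one lands within $5f+5g$. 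This observation is what you needed, and once added your proof becomes essentially the paper's.
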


\begin{proof}
First, note that if $b=1$, then $\mathsf{R}$ has at most $3a$ horizontal sections and at most $a^2$ vertical sections, and the second outcome holds trivially with $X=\emptyset$ since $3a+a^2 \leq 3m + m^2 \leq 5f_{\sref{lem:separateorchard}}(c,m) + 5g_{\sref{lem:separateorchard}}(c,m)$.
Thus we may assume $b\geq 2$ in what follows.

  Suppose first that $\mathsf{s}$ is a section of some vertical tree $T$ of $\mathsf{R}$.
  Then we discard $T$ from $\mathsf{R}$ to obtain an $a\times(b-1)$-suborchard $\mathsf{R}_1$.
  Since $\mathsf{s}$ is disjoint from every horizontal path, $\mathsf{R}_1$ is vertex-disjoint from $\mathsf{s}$, while having the same horizontal paths as $\mathsf{R}$. Noting that $\mathsf{s}$ can be seen as a $1\times |\mathsf{s}|$-orchard $\mathsf{R'}$, we can apply \autoref{lem:separateorchard_extended} to $\mathsf{R}_1$ and $\mathsf{R'}$.
  Either we obtain $2^m$ pairwise vertex-disjoint
$(a+1)\times c$-orchards in $G$ (in which case we are done), or there
is a subset $X \subseteq V(G) \setminus V(\sR_1)$ such that $V(\sR') \setminus X$
reaches at most $f_{\sref{lem:separateorchard}}(c,m) +
g_{\sref{lem:separateorchard}}(c,m)$ sections of the orchard $\sR_1$ in
$G-X$, and $|X| \leq f_{\sref{lem:separateorchard}}(c,m) +
g_{\sref{lem:separateorchard}}(c,m)$. 
Note that on each horizontal path, there are at most three horizontal sections of $\sR$ that are not a section of $\sR_1$, namely: the unique section that has a non-empty intersection with $T$ and at most two sections that are bordered by $T$.
Since $T$ contains at most $a^2$ vertical sections, it follows that $V(\sR') \setminus X$ (which is $V(\mathsf{s}) \setminus X$)
reaches at most $f_{\sref{lem:separateorchard}}(c,m) +
g_{\sref{lem:separateorchard}}(c,m) + a^2 +3a \leq
5f_{\sref{lem:separateorchard}}(c,m) + 5g_{\sref{lem:separateorchard}}(c,m)$
sections of $\sR$ in $G-X$. Therefore $X$ has the desired property.

Next, suppose that $\mathsf{s}$ is a section of some horizontal path $P$ of $\mathsf{R}$. Decompose $P=P_0\mathsf{s}P_1$, where $P_0$ (respectively $P_1$) is the graph induced by the vertices of $P$ strictly to the left (respectively right) of $\mathsf{s}$.
For each $k \in \{0,1\}$, let $\mathsf{R}_{k}$ be the orchard obtained from $\mathsf{R}$ by discarding all vertical trees that intersect $P_{1-k}$ or $\mathsf{s}$, and truncating the horizontal path $P$ to $P-(V(P_{1-k}) \cup V(\mathsf{s}))$. Note that possibly $\mathsf{R}_k$ contains no vertical tree, in which case it has at most $a\leq m$ sections.

As before, we note that $\mathsf{s}$ forms a $1\times |\mathsf{s}|$-orchard $\mathsf{R'}$ that is
vertex-disjoint from $\mathsf{R}_k$.
We apply \autoref{lem:separateorchard_extended} to
$\mathsf{R}_k$ and $\mathsf{R'}$, for each $k\in \{0,1\}$. If $G$ does not contain $2^m$
pairwise vertex-disjoint $(a+1)\times c$-orchards, then for each $k\in \left\{0,1\right\}$, we obtain a subset $X_k \subseteq V(G) \setminus V(\mathsf{R}_k)$ with $|X_k| \leq
f_{\sref{lem:separateorchard}}(c,m) + g_{\sref{lem:separateorchard}}(c,m)$
such that $V(\mathsf{R'}) \setminus X_k$ reaches at most
$f_{\sref{lem:separateorchard}}(c,m) + g_{\sref{lem:separateorchard}}(c,m)$
sections of $\mathsf{R}_k$ in $G-X_k$. Observe that if $\mathsf{R}_k$ has no vertical tree, then we do not need to apply \autoref{lem:separateorchard_extended} since we can just take $X_k=\emptyset$. We now choose $X := X_0 \cup X_1$.

Possibly $\mathsf{s}$ is the intersection of a vertical tree $T$ and a horizontal path $P$. In that case we denote by $\mathsf{R}_T$ the orchard formed by $T$ and the horizontal sections of $\mathsf{R}$ that intersect $T$ or are bordered by $T$. Each vertical section of $\mathsf{R}$ is a vertical section of $\mathsf{R}_T$, $\mathsf{R_0}$ or $\mathsf{R}_1$. Note that $\mathsf{R}_T$ contains at most $a^2\leq m^2$ vertical sections and at most $3a\leq 3m$ horizontal sections.

Suppose a horizontal section $\mathsf{t}$ of $\mathsf{R}$ is not a horizontal section of $\mathsf{R}_0$, $\mathsf{R}_1$ or $\mathsf{R}_T$ (if defined). Then $\mathsf{t}$ must be bordered in $\mathsf{R}$ by a vertical tree of $\mathsf{R}_0$ and a vertical
tree of $\mathsf{R}_1$, and therefore we call $\mathsf{t}$ of \textit{mixed type}. Suppose $V(\mathsf{s}) \setminus X$ reaches $\mathsf{t}$ in $G-X$. Then it also reaches some horizontal section $\mathsf{t}^*$ of $\mathsf{R}_0$ or $\mathsf{R}_1$ in $G-X$ such that $\mathsf{t}$ is contained in  $\mathsf{t}^*$. Note that every horizontal section of $\mathsf{R}_0$ or $\mathsf{R}_1$ contains at most two horizontal sections of $\mathsf{R}$ that are of mixed type.

It follows from the previous discussion that $V(\mathsf{s}) \setminus X$ reaches at most $2\cdot 2 \cdot \left(f_{\sref{lem:separateorchard}}(c,m) +
g_{\sref{lem:separateorchard}}(c,m) \right) + m^2 + 3m \leq 5 f_{\sref{lem:separateorchard}}(c,m) +
5 g_{\sref{lem:separateorchard}}(c,m)$ sections of $\sR$ in $G-X$. As $|X|
\leq 5f_{\sref{lem:separateorchard}}(c,m) + 5g_{\sref{lem:separateorchard}}(c,m)$, we are done.
\end{proof}

Using Lemmas~\ref{lem:separateorchard_extended} and~\ref{lem:separatepathmix}, we derive the following lemma.

\begin{lemma}\label{lem:separ}
Let $m\in \mathbb{N}$. Suppose that $\mathsf{R}$ is an $a \times b$-orchard in a graph $G$ and $\mathsf{R'}$ is an $a' \times b'$-orchard in $G$ vertex-disjoint from $\mathsf{R}$, with $a, a'\in [m]$.
Then for each $c\geq 1$ at least one of the following holds.
\begin{enumerate}[label = (\arabic*)]
	\item $G$ contains a bramble of order at least $m$; \label{enum:grid}
	\item $G$ contains $2^{m}$ pairwise vertex-disjoint $(a+1) \times c$-orchards;  \label{enum:many}
	\item there exists $X \subseteq V(G)$ with
          \[
            |X| \leq f_{\sref{lem:separ}}(c,m):=(5f_{\sref{lem:separateorchard}}(c,m) + 5g_{\sref{lem:separateorchard}}(c,m)) \cdot (g_{\sref{lem:separ}}(c,m) + 1) + m \cdot \binom{g_{\sref{lem:separ}}(c,m)}{m}
          \]
          such that each component of $G-X$ that intersects $V(\mathsf{R'})$ intersects at most $g_{\sref{lem:separ}}(c,m):=(5f_{\sref{lem:separateorchard}}(c,m) + 5g_{\sref{lem:separateorchard}}(c,m))^m$ sections of the orchard $\mathsf{R}$. \label{enum:cutset}
\end{enumerate}
\end{lemma}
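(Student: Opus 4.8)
The plan is to iterate the separation Lemmas~\ref{lem:separateorchard_extended} and~\ref{lem:separatepathmix} in a breadth-first fashion, growing a bounded family $\mathcal{A}$ of sections of $\mathsf{R}$ that $\mathsf{R'}$ can ``see'' through the rest of $G$, while collecting a separator along the way. Throughout, write $\ell := 5f_{\sref{lem:separateorchard}}(c,m) + 5g_{\sref{lem:separateorchard}}(c,m)$, so that $g_{\sref{lem:separ}}(c,m) = \ell^m$; note that $\ell$ simultaneously bounds the separator size and the number of reached sections in the conclusion of Lemma~\ref{lem:separatepathmix}, while the corresponding quantities in Lemma~\ref{lem:separateorchard_extended} are at most $\ell/5$. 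We may assume that outcomes~\ref{enum:grid} and~\ref{enum:many} both fail; in particular $G$ has no bramble of order $m$, and whenever we invoke Lemma~\ref{lem:separateorchard_extended} or Lemma~\ref{lem:separatepathmix} below its first (orchard-packing) outcome fails, so we always obtain the separating outcome. It remains to construct $X$ as in~\ref{enum:cutset}.

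First I would apply Lemma~\ref{lem:separateorchard_extended} to $(\mathsf{R},\mathsf{R'})$, obtaining $X_0\subseteq V(G)\setminus V(\mathsf{R})$ with $|X_0|\le \ell/5$ and the set $\mathcal{A}_0$ of sections of $\mathsf{R}$ reached by $V(\mathsf{R'})\setminus X_0$ in $G-X_0$, with $|\mathcal{A}_0|\le \ell/5$. Then, for $i=1,\dots,m-1$ in turn, and for every section $\mathsf{s}\in\mathcal{A}_{i-1}$, I would apply Lemma~\ref{lem:separatepathmix} to $(\mathsf{R},\mathsf{s})$, obtaining $X_{\mathsf{s}}\subseteq V(\mathsf{s})\cup(V(G)\setminus V(\mathsf{R}))$ with $|X_{\mathsf{s}}|\le\ell$ such that $V(\mathsf{s})\setminus X_{\mathsf{s}}$ reaches at most $\ell$ sections other than $\mathsf{s}$ in $G-X_{\mathsf{s}}$; let $\mathcal{A}_i$ be the union of these section sets over $\mathsf{s}\in\mathcal{A}_{i-1}$. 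Put $\mathcal{A}:=\bigcup_{i=0}^{m-1}\mathcal{A}_i$ and $X_{\mathrm{B}}:=X_0\cup\bigcup_{i=1}^{m-1}\bigcup_{\mathsf{s}\in\mathcal{A}_{i-1}}X_{\mathsf{s}}$. Since $|\mathcal{A}_i|\le\ell\,|\mathcal{A}_{i-1}|$ and $|\mathcal{A}_0|\le\ell/5$, a geometric estimate gives $|\mathcal{A}|\le\ell^m=g_{\sref{lem:separ}}(c,m)$ (using $\ell\geq 2$), and the number of separators used is at most $1+\sum_{j=0}^{m-2}|\mathcal{A}_j|\le \ell^m+1$, so $|X_{\mathrm{B}}|\le\ell\cdot(\ell^m+1)$.

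The heart of the argument is to show that, after deleting $X_{\mathrm{B}}$ together with a small extra set inside $\mathsf{R}$, every component of the resulting graph meeting $V(\mathsf{R'})$ meets only sections in $\mathcal{A}$. Here the key observation is that a single edge of $\mathsf{R}$, and hence a single section of $\mathsf{R}$, qualifies as a path with no internal vertex in $\mathsf{R}$, so the relation ``is reached by'' also records adjacency within $\mathsf{R}$. Consequently, in $G-X_{\mathrm{B}}$, if one follows a path from $V(\mathsf{R'})$ and lists the sections of $\mathsf{R}$ it meets in order, the first one lies in $\mathcal{A}_0$, and any section met immediately after an already-processed section $\mathsf{s}\in\mathcal{A}_{i-1}$ (with $i\le m-1$) lies in $\mathcal{A}_i\subseteq\mathcal{A}$; so the only way a component meeting $V(\mathsf{R'})$ could meet a section outside $\mathcal{A}$ is through a chain of more than $m$ pairwise distinct sections of $\mathsf{R}$ successively reached from $\mathsf{R'}$. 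From such a chain, together with the horizontal paths and vertical trees of $\mathsf{R}$ (and of $\mathsf{R'}$) and the orchard-combining moves underlying Lemmas~\ref{lem:separateorchard_extended} and~\ref{lem:separatepathmix} — essentially the myriapod cover of Lemma~\ref{lem:myriapodcover} and the Erdős--Szekeres argument of \autoref{thm:es} — one extracts either a bramble of order $m$ or $2^m$ pairwise vertex-disjoint $(a+1)\times c$-orchards in $G$, contradicting our assumption. To finish, note that the unprocessed frontier $\mathcal{A}_{m-1}$ may still have neighbours inside $\mathsf{R}$ outside $\mathcal{A}$; I would seal these off by adding to $X$ a set $Y\subseteq V(\mathsf{R})$ separating, inside $\mathsf{R}$, the region met by $\mathcal{A}$ from the rest of $\mathsf{R}$. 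Using that $\mathsf{R}$ has at most $m$ horizontal paths, that each vertical tree contributes $O(m^2)$ sections, and that $|\mathcal{A}|\le\ell^m$ — examining the at most $\binom{\ell^m}{m}$ relevant $m$-subsets of $\mathcal{A}$, each of which either witnesses one of the forbidden structures or can be cut off with at most $m$ vertices — one obtains such a $Y$ with $|Y|\le m\binom{\ell^m}{m}$. Then $X:=X_{\mathrm{B}}\cup Y$ satisfies $|X|\le\ell(\ell^m+1)+m\binom{\ell^m}{m}=f_{\sref{lem:separ}}(c,m)$, and every component of $G-X$ meeting $V(\mathsf{R'})$ meets at most $|\mathcal{A}|\le\ell^m=g_{\sref{lem:separ}}(c,m)$ sections of $\mathsf{R}$, as required.

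I expect the main obstacle to be the third paragraph: making precise that a sufficiently deep chain of successively reached sections of $\mathsf{R}$ is incompatible with both the absence of a bramble of order $m$ and the absence of $2^m$ disjoint $(a+1)\times c$-orchards (so that $m$ breadth-first rounds genuinely suffice), and, relatedly, constructing the within-$\mathsf{R}$ sealing set $Y$ with exactly the claimed bound $m\binom{g_{\sref{lem:separ}}(c,m)}{m}$. The bookkeeping in the first two paragraphs — matching the separator total to the first term of $f_{\sref{lem:separ}}(c,m)$ and the section total to $g_{\sref{lem:separ}}(c,m)$ — is routine once $\ell$ is chosen as above.
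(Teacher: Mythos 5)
Your overall architecture is right — a breadth-first growth of a family of sections of $\mathsf{R}$ reachable from $\mathsf{R'}$, using Lemma~\ref{lem:separateorchard_extended} to seed and Lemma~\ref{lem:separatepathmix} for the subsequent rounds, stopping after roughly $m$ rounds, with the first term of $f_{\sref{lem:separ}}(c,m)$ accounting for the accumulated separators. This matches the paper's construction (there phrased via an auxiliary directed graph $G^*$ on $\mathcal{S}\cup\{r\}$ with the ``reaches'' relation and its BFS out-arborescence $T^*$). The bookkeeping in your first two paragraphs is sound.

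The genuine gap is in your third paragraph, and it is exactly the part you flag as the ``main obstacle''. First, a single long chain of successively reached sections does not, by itself, yield a bramble of order $m$ or $2^m$ disjoint $(a+1)\times c$-orchards; the myriapod cover and Erd\H{o}s--Szekeres machinery you invoke only applies when two orchards are joined by a large matching, which one chain does not supply, and the bramble in this lemma is obtained by an entirely different mechanism. Second, your ``sealing set'' $Y\subseteq V(\mathsf{R})$ cannot work as described: once a path from $\mathsf{R'}$ reaches a depth-$(m{-}1)$ section $\mathsf{s}\in\mathcal{A}_{m-1}$, whose separator $X_{\mathsf{s}}$ you have \emph{not} removed, it can leave $\mathsf{R}$ entirely and re-enter at an arbitrary section; no cut confined to $V(\mathsf{R})$ can prevent that, and there is no reason such a cut should have size $m\binom{\ell^m}{m}$. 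What the paper actually does at this point, and what your proposal is missing, is a dichotomy on the \emph{number of vertex-disjoint paths} from $V(\mathsf{R'})$ to the depth-$m$ frontier (inside $G$ minus the accumulated $Y$-sets): if there are at most $z:=m\binom{g_{\sref{lem:separ}}(c,m)}{m}$ of them, Menger's theorem gives a cut $C$ with $|C|\le z$ (this is where the $m\binom{\ell^m}{m}$ term comes from, not from a within-$\mathsf{R}$ separator), and adding $C$ to the accumulated $Y$-sets yields outcome~\ref{enum:cutset}; if there are more than $z$, one assigns to each path its \emph{signature} (the first $m$ distinct sections it meets), notes there are at most $\binom{g_{\sref{lem:separ}}(c,m)}{m}$ possible signatures, and by pigeonhole extracts $m$ vertex-disjoint paths with a common signature $\mathcal{T}$; the $m$ paths and the $m$ sections of $\mathcal{T}$ then form a bramble of order $m$, giving outcome~\ref{enum:grid}. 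Without this Menger/signature dichotomy there is no way to justify that $m$ BFS rounds suffice or to account for the $m\binom{g_{\sref{lem:separ}}(c,m)}{m}$ term, so as written the proposal does not close.
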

\begin{proof}
Let $\mathcal{S}$ denote the set of sections of $\mathsf{R}$.
Assume that~\ref{enum:many} does not hold (otherwise, we are done).
Then, for every section $\mathsf{s}\in \mathcal{S}$, \autoref{lem:separatepathmix} yields a subset $Y_{\mathsf{s}} \subseteq V(\mathsf{s}) \cup (V(G) \setminus V(\mathsf{R}))$ of size $|Y_{\mathsf{s}}|\leq 5f_{\sref{lem:separateorchard}}(c,m) + 5g_{\sref{lem:separateorchard}}(c,m)$ such that $\mathsf{s}$ reaches at most $5f_{\sref{lem:separateorchard}}(c,m) + 5g_{\sref{lem:separateorchard}}(c,m)$ sections of $\mathsf{R}$ in $G-Y_{\mathsf{s}}$.
Also, \autoref{lem:separateorchard_extended} gives a set $Y_r \subseteq V(G) \setminus V(\mathsf{R})$ of size at most $f_{\sref{lem:separateorchard}}(c,m)+g_{\sref{lem:separateorchard}}(c,m)$ such that $V(\mathsf{R'})$ reaches at most $f_{\sref{lem:separateorchard}}(c,m)+g_{\sref{lem:separateorchard}}(c,m)$ sections of $\mathsf{R}$ in $G-Y_r$.
Let $\mathcal{S}_r$ denote the set of sections of $\mathsf{R}$ reached by $V(\mathsf{R'})$ in  $G-Y_r$.

We construct an auxiliary directed graph $G^*$ with vertex set $\mathcal{S} \cup \{r\}$, where $r$ is a dummy element representing $\mathsf{R'}$, and adjacencies are defined as follows. 
For each $\mathsf{s} \in \mathcal{S}_r$, there is a directed edge from the vertex $r$ to $\mathsf{s}$.
For two distinct sections $\mathsf{s}, \mathsf{s}' \in \mathcal{S}$, there is a directed edge from $\mathsf{s}$ to $\mathsf{s}'$ if and only if $\mathsf{s}$ reaches $\mathsf{s}'$ in $G-Y_{\mathsf{s}}$.
It follows that the maximum outdegree of a vertex of $G^*$ is at most 
\[
  \max\{f_{\sref{lem:separateorchard}}(c,m) + g_{\sref{lem:separateorchard}}(c,m), 5f_{\sref{lem:separateorchard}}(c,m) + 5g_{\sref{lem:separateorchard}}(c,m)\} = 5f_{\sref{lem:separateorchard}}(c,m) + 5g_{\sref{lem:separateorchard}}(c,m).
\]
In what follows, vertices of $G^*$ will be classified by their {\em depth}, defined as the minimum length (number of directed arcs) in a directed path from $r$ to the vertex (or $+\infty$ in case no such directed path exists). 
Let $T^*$ be an out-arborescence obtained by performing a breadth-first search tree in $G^*$ from vertex $r$ using outgoing directed edges: 
For each section $\mathsf{s} \in \mathcal{S}$ at finite depth $d$, choose an in-neighbor of $\mathsf{s}$ with depth $d-1$ and add the corresponding directed edge to $T^*$. 
Note that $T^*$ only contains vertices of $G^*$ reachable from $r$ by a directed path, which might not be all vertices of $G^*$.
Define the {\em height} of $T^*$ as the maximum depth of a vertex of $T^*$. 
Let $\mathcal{S}_{\leq m}$ denote the set of sections $\mathsf{s} \in \mathcal{S}$ with depth at most $m$.

As a warm-up, suppose that the height of $T^*$ is less than $m$. 
Let $X := Y_r \cup \bigcup_{\mathsf{s} \in \mathcal{S}_{\leq m}} Y_{\mathsf{s}}$. 
Now, consider a path $P$ in $G-X$ having one endpoint in $V(\mathsf{R'})$ but no other vertex in $V(\mathsf{R'})$, and the other endpoint in a section $\mathsf{s} \in \mathcal{S}$. 
We claim that $\mathsf{s} \in \mathcal{S}_{\leq m}$. 
To see this, let us map the vertices of $P$ to vertices of $G^*$ in the expected way: Replace the endpoint of $P$ in $V(\mathsf{R'})$ by $r$, replace each maximal sequence of consecutive vertices of $P$ belonging to a section $\mathsf{s'} \in \mathcal{S}$ by the vertex $\mathsf{s'}$ of $G^*$, and remove all vertices of $P$ not in $V(\mathsf{R})$. 
This results in a sequence $r, \mathsf{s_1}, \mathsf{s_2}, \dots, \mathsf{s_k}$ of vertices of $G^*$ with $\mathsf{s_i} \in \mathcal{S}$ for each $i\in [k]$, some of which possibly appear multiple times. 
Now, observe that $V(\mathsf{R'})$ reaches $\mathsf{s_1}$ in $G-Y_{\mathsf{r}}$, so $(r, \mathsf{s_1})$ is a directed edge of $G^*$. 
Similarly, $\mathsf{s_i}$ reaches $\mathsf{s_{i+1}}$ in $G-Y_{\mathsf{s_i}}$ for each $i\in [k-1]$, so $G^*$ contains the directed edge $(\mathsf{s_i}, \mathsf{s_{i+1}})$. 
Hence, $r, \mathsf{s_1}, \mathsf{s_2}, \dots, \mathsf{s_k}$ is a directed walk in $G^*$, and therefore $\mathsf{s_k}=\mathsf{s}\in \mathcal{S}_{\leq m}$, since all vertices of $G^*$ with finite depth have depth less than $m$ by our assumption. 

It follows from the previous discussion that each component of $G-X$ intersecting $V(\mathsf{R'})$ intersects at most $|\mathcal{S}_{\leq m}| \leq g_{\sref{lem:separ}}(c,m)$ sections of $\mathsf{R}$, so that~\ref{enum:cutset} holds. 
Indeed, this number of sections is at most the number of vertices of
$T^*$,  which is bounded from above by
\[
  \Delta_{\text{out}}(T^*)^{\text{height}(T^*)+1} \leq (5f_{\sref{lem:separateorchard}}(c,m) + 5g_{\sref{lem:separateorchard}}(c,m))^{m} = g_{\sref{lem:separ}}(c,m),
\]
where $\Delta_{\text{out}}(T^*)$ denotes the maximum outdegree of $T^*$.

Moreover, we have 
\begin{align*}
  |X| &\leq f_{\sref{lem:separateorchard}}(c,m)
  +g_{\sref{lem:separateorchard}}(c,m) + (5f_{\sref{lem:separateorchard}}(c,m) + 5g_{\sref{lem:separateorchard}}(c,m)) \cdot g_{\sref{lem:separ}}(c,m)\\
& \leq (5f_{\sref{lem:separateorchard}}(c,m) + 5g_{\sref{lem:separateorchard}}(c,m)) \cdot (g_{\sref{lem:separ}}(c,m) + 1)\\
      &\leq f_{\sref{lem:separ}}(c,m).
\end{align*}
We may thus assume that the height of $T^*$ is at least $m$.

Let $Q\subset{V(T^*)}$ denote the set of sections with depth $m$.
Let $V_Q$ denote the set of vertices in $V(\mathsf{R})$ that are in a section in $Q$.
We now consider a maximum-size collection $\mathcal{Q}$ of vertex-disjoint paths that join $V(\mathsf{R'})$ with $V_Q$ in $G - \left (Y_r \cup \bigcup_{\mathsf{s}\in\mathcal{S}_{\leq m}} Y_{\mathsf{s}} \right )$ and we proceed with a case distinction on $|\mathcal{Q}|$, the number of these disjoint paths.

First, suppose $|\mathcal{Q}| \leq z(c,m):= f_{\sref{lem:separ}}(c,m)- (5f_{\sref{lem:separateorchard}}(c,m) + 5g_{\sref{lem:separateorchard}}(c,m)) \cdot (g_{\sref{lem:separ}}(c,m) + 1)$.
Then by Menger's Theorem, there is a set $C$ of vertices of size at most $z(c,m)$ separating $V(\mathsf{R'})$ from $V_Q$ in $G - \left (Y_r \cup \bigcup_{\mathsf{s}\in \mathcal{S}_{\leq m}} Y_{\mathsf{s}} \right )$. 
Observe that, by the definition of the directed graph $G^*$, every path in $G - \left (Y_r \cup \bigcup_{\mathsf{s}\in \mathcal{S}_{\leq m}} Y_{\mathsf{s}} \right )$ connecting a vertex of $V(\mathsf{R'})$ to a vertex belonging to a section of depth larger than $m$ must meet a section of depth exactly $m$. 
Thus, in the graph $G - \left (Y_r \cup \bigcup_{\mathsf{s}\in \mathcal{S}_{\leq m}} Y_{\mathsf{s}} \right )$, the set $C$ also separates $V(\mathsf{R'})$ from every vertex belonging to a section of depth larger than $m$. 
We then let $X:= C \cup Y_r \cup \bigcup_{\mathsf{s}\in \mathcal{S}_{\leq m}}
Y_{\mathsf{s}}$, which has size
\begin{align*}
  |X| &\leq z(c,m) + (5f_{\sref{lem:separateorchard}}(c,m) + 5g_{\sref{lem:separateorchard}}(c,m)) \cdot (g_{\sref{lem:separ}}(c,m) + 1)\\
  &\leq f_{\sref{lem:separ}}(c,m).
\end{align*}
As before, we find that each component of $G-X$ intersecting $V(\mathsf{R'})$ intersects at most $|\mathcal{S}_{\leq m}| \leq g_{\sref{lem:separ}}(c,m)$ sections of $\mathsf{R}$.

Next, assume that $|\mathcal{Q}|> z(c,m)$.
That is, there are many disjoint paths between $V(\mathsf{R'})$ and $V_Q$.
From this we will derive that $G$ contains a bramble of order at least $m$.
For each path $P \in \mathcal{Q}$, let the \textit{signature}
$\text{sign}(P)\subseteq \mathcal{S}$ of $P$ denote the set of the first $m$
different sections of $\mathcal{S}$ that $P$ intersects, starting from
its endpoint in $V(\mathsf{R'})$.

Note that $\text{sign}(P)\subseteq \mathcal{S}_{\leq m}$ and that it contains exactly $m$ elements, by construction.
Thus at most $|\mathcal{S}_{\leq m}| \leq g_{\sref{lem:separ}}(c,m)$ different sections can appear in signatures, and the number of distinct signatures is at most $\binom{g_{\sref{lem:separ}}(c,m)}{m}$.
By the pigeonhole principle it then follows that there is a set $\mathcal{P} \subseteq \mathcal{Q}$ of 
\[\frac{z(c,m)}{\binom{g_{\sref{lem:separ}}(c,m)}{m}}=m\] 
disjoint paths that have a common signature $\mathcal{T}$. 
By definition of signature, each $P \in \mathcal{P}$ and $T \in \mathcal{T}$ have at least one vertex in common. Therefore $\mathcal{B} :=\left\{ T \cup P \mid  (T,P) \in \mathcal{T} \times \mathcal{P} \right\}$ is a bramble. Moreover, $\mathcal{B}$ has order at least $m$ since the paths in $\mathcal{P}$ (respectively $\mathcal{T}$) are vertex-disjoint and $|\mathcal{P}| = |\mathcal{T}| = m$. This is outcome \ref{enum:grid} so we are done. 
\end{proof}

\section{Packing orchards}\label{sec:packing_orchards}

This section deals with packings of orchards of prescribed types in a graph.
Let $G$ be a graph, let $m\in \N$, and let $\omega \colon [m] \to \N$ be a decreasing function.
An \emph{orchard $(m, \omega)$-packing} in $G$ is a tuple $\mathcal{D} = (\mathcal{R}_1, \dots, \mathcal{R}_m)$ such that
\begin{itemize}
\item for every $i \in [m]$, $\mathcal{R}_i$ is a (possibly empty) collection of $i \times \omega(i)$-orchards;
\item all orchards in $\bigcup_{i=1}^m \mathcal{R}_i$ are pairwise vertex-disjoint;
\item every orchard $\sR \in \mR_1$ is a path with exactly $\omega(1)$ vertices.
\end{itemize}

We write $V(\mathcal{D})$ for the vertex set $\bigcup_{i=1}^m V(\mathcal{R}_i)$.
The \emph{grade} of $\mathcal{D}$ is the sum $\sum_{i=1}^m 2^i|\mathcal{R}_i|$.
We say that $\mathcal{D}$ is {\em optimal} if it has maximum grade among all orchard $(m, \omega)$-packings in~$G$.

\begin{lemma}\label{lem:prop_optimal_packing}
Let $m$ and $\omega$ be as above, let $\mathcal{D}=(\mR_1, \dots, \mR_m)$ be an optimal orchard $(m, \omega)$-packing in a graph $G$, and let $\mathsf{R} \in \mathcal{R}_i$ and $\mathsf{R'}\in \mathcal{R}_j$ for some $i,j\in [m]$.
Then $G':=G[V(\mathsf{R}) \cup V(\mathsf{R'}) \cup (V(G) \setminus V(\mathcal{D}))]$ does not contain $2^m$ pairwise vertex-disjoint $(i+1)\times \omega(i+1)$-orchards.
\end{lemma}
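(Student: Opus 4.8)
The plan is to argue by contradiction using the optimality (maximum grade) of the packing $\mathcal{D}$. Suppose that $G' = G[V(\mathsf{R}) \cup V(\mathsf{R'}) \cup (V(G) \setminus V(\mathcal{D}))]$ does contain a collection of $2^m$ pairwise vertex-disjoint $(i+1) \times \omega(i+1)$-orchards, call them $\mathsf{Q}_1, \dots, \mathsf{Q}_{2^m}$. These orchards live inside $G'$, so each of them only uses vertices of $V(\mathsf{R})$, $V(\mathsf{R'})$, and the vertices outside the packing; in particular they are vertex-disjoint from every orchard in $\bigcup_{\ell=1}^m \mathcal{R}_\ell$ other than $\mathsf{R}$ and $\mathsf{R'}$.

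The key step is then to build a new orchard $(m,\omega)$-packing $\mathcal{D}'$ of strictly larger grade. Starting from $\mathcal{D}$, I would remove the two orchards $\mathsf{R} \in \mathcal{R}_i$ and $\mathsf{R'} \in \mathcal{R}_j$ (note $i$ and $j$ could be equal, so care is needed here: if $i = j$ we remove two orchards from $\mathcal{R}_i$; otherwise one from each of $\mathcal{R}_i, \mathcal{R}_j$), and add all $2^m$ new orchards $\mathsf{Q}_1, \dots, \mathsf{Q}_{2^m}$ to $\mathcal{R}_{i+1}$. Since each $\mathsf{Q}_\ell$ is an $(i+1)\times \omega(i+1)$-orchard, this is a legitimate membership. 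The resulting tuple $\mathcal{D}'$ is still an orchard $(m,\omega)$-packing: the vertex-disjointness among the remaining old orchards is inherited; the $\mathsf{Q}_\ell$'s are pairwise vertex-disjoint by hypothesis; and the $\mathsf{Q}_\ell$'s are disjoint from the old orchards other than $\mathsf{R},\mathsf{R'}$ because they lie in $G'$, which excludes $V(\mathcal{D}) \setminus (V(\mathsf{R})\cup V(\mathsf{R'}))$. Also the condition that every orchard in $\mathcal{R}_1$ is a path on $\omega(1)$ vertices is preserved, since we only add orchards to $\mathcal{R}_{i+1}$ with $i+1 \geq 2$, and removing orchards from $\mathcal{R}_i$ or $\mathcal{R}_j$ cannot violate it. One subtlety: $i+1$ could exceed $m$ if $i = m$; but then $\mathsf{Q}_\ell$ would be an $(m+1)\times\omega(i+1)$-orchard, and since $\omega$ is only defined on $[m]$ this situation must be excluded — I would note that the statement implicitly requires $i \leq m-1$ for $\omega(i+1)$ to make sense, or equivalently this case is vacuous.

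Finally, a grade computation: the grade of $\mathcal{D}$ is $\sum_\ell 2^\ell |\mathcal{R}_\ell|$, and passing from $\mathcal{D}$ to $\mathcal{D}'$ we lose $2^i$ (for removing $\mathsf{R}$) plus $2^j$ (for removing $\mathsf{R'}$) and gain $2^m \cdot 2^{i+1} = 2^{m+i+1}$ (for adding $2^m$ orchards to $\mathcal{R}_{i+1}$). Since $i, j \leq m$, we have $2^i + 2^j \leq 2^{m+1} \leq 2^{m+i+1}$, and in fact $2^{m+i+1} > 2^i + 2^j$ strictly (as $m \geq 1$ and $i \geq 1$ give $2^{m+i+1} \geq 2^{m+2} > 2^{m+1} \geq 2^i + 2^j$). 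Hence $\mathcal{D}'$ has strictly larger grade than $\mathcal{D}$, contradicting optimality of $\mathcal{D}$. I expect the only real care-points to be the bookkeeping when $i = j$ (so that $\mathsf{R}$ and $\mathsf{R'}$ are distinct elements of the same $\mathcal{R}_i$ and both get removed) and making sure $\mathsf{R}, \mathsf{R'} \notin \{\mathsf{Q}_1,\dots,\mathsf{Q}_{2^m}\}$ is not needed — we simply replace them regardless — together with the implicit hypothesis $i \le m-1$; the rest is a routine grade inequality, which is why this should be a short proof.
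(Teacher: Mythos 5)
Your proposal is correct and takes exactly the same approach as the paper's proof: remove $\mathsf{R}$ and $\mathsf{R'}$, add the $2^m$ new orchards to $\mathcal{R}_{i+1}$, and observe that the grade increases by $2^m \cdot 2^{i+1} \geq 2^{m+2}$ while it decreases by only $2^i + 2^j \leq 2^{m+1}$, contradicting optimality. Your extra remarks on the $i=j$ bookkeeping and the implicit $i \leq m-1$ constraint are accurate observations that the paper leaves tacit.
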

\begin{proof}
Suppose $G'$ \textit{does} contain $2^m$ pairwise vertex-disjoint
$(i+1)\times \omega(i+1)$-orchards $\mathsf{R}_1, \ldots,
\mathsf{R}_{2^m} $. Then we can obtain a new orchard $(m, \omega)$-packing
$\mathcal{D}'$ from $\mathcal{D}$ by removing $\mathsf{R}$ and $\mathsf{R}'$ from $\mathcal{R}_i$ and $\mathcal{R}_j$, respectively, and adding $\mathsf{R}_1,
\ldots, \mathsf{R}_{2^m}$ to $\mathcal{R}_{i+1}$.
Any other orchard of $\mathcal{D}$ is vertex-disjoint from $G'$ and
is therefore unaffected by this replacement. It follows that the grade
has been increased by $2^{i+1} \cdot 2^m \geq 2^{m+2}$ and has been
decreased by $2^{i} + 2^{j} \leq 2^{m+1}$. Thus $\mathcal{D}'$ has a
higher grade than $\mathcal{D}$, a contradiction.
\end{proof}	

\begin{lemma}\label{lem:optimal_packing_bounded_model}
Let $m$ and $\omega$ be as above, let $\mathcal{D}=(\mR_1, \dots, \mR_m)$ be an optimal orchard $(m, \omega)$-packing in a graph $G$, and let $Z \subseteq V(G)$.
Let $q$ be the number of orchards in $\bigcup_{i=1}^{m} \mR_i$ having at least one vertex in common with $Z$.
Then, for every minor $H$ of $G[Z]$, there is a model of $H$ in $G[Z]$ of size at most
$$\|H\| \cdot \max(q,1) \cdot 2^{m+1} \cdot  \omega(1).$$
\end{lemma}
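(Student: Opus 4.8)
The plan is to start from a minimal model of $H$ in $G[Z]$, use the optimality of $\mathcal{D}$ to control both the part of the model lying inside the $q$ relevant orchards and the part lying outside $V(\mathcal{D})$, and then reroute.

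First I would fix a model $\mathcal{M}=\{M_x : x\in V(H)\}$ of $H$ in $G[Z]$ minimizing $\sum_x |V(M_x)|$. A standard exchange argument shows each $M_x$ is a tree and that every leaf of $M_x$ is an endpoint of an edge of $G[Z]$ joining $M_x$ to some $M_y$ with $xy\in E(H)$; hence $M_x$ has at most $\max(\deg_H(x),1)$ leaves. Consequently the subgraph $F$ of $G[Z]$ which is the union of the trees $M_x$ and of one chosen edge per edge of $H$ admits a cover by at most $3\|H\|$ paths of $G[Z]$ (decompose each $M_x$ into at most $\deg_H(x)$ paths and add the $\|H\|$ chosen edges), and $H$ is still a minor of the union of these paths. (Isolated vertices of $H$ give single-vertex branch sets and are handled separately, so we may assume $H$ has none.) It thus remains to reroute these paths so that each becomes short while keeping the union a supergraph of a model of $H$.

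The two facts I would extract from optimality are the following. If $G-V(\mathcal{D})$ contained a path on $\omega(1)$ vertices, then adding it to $\mathcal{R}_1$ would produce an orchard $(m,\omega)$-packing of larger grade; hence $G[Z\setminus V(\mathcal{D})]$ has no path on $\omega(1)$ vertices. Similarly, if some $\mathsf{R}\in\mathcal{R}_i$ contained $2^{i-1}+1$ pairwise vertex-disjoint paths on $\omega(1)$ vertices, then deleting $\mathsf{R}$ from $\mathcal{R}_i$ and adding those paths to $\mathcal{R}_1$ would increase the grade (we lose $2^i$ and gain at least $2(2^{i-1}+1)>2^i$); since a path on $N$ vertices contains $\lfloor N/\omega(1)\rfloor$ pairwise vertex-disjoint $\omega(1)$-vertex subpaths and $i\le m$, it follows that no path subgraph of any orchard of $\mathcal{D}$ has $2^{m+1}\omega(1)$ or more vertices. (The same swap, applied to the pairwise vertex-disjoint sections of $\mathsf{R}$ and combined with the section count used in the proof of \autoref{lem:myriapodcover}, also bounds $|V(\mathsf{R})|$ in terms of $m$ and $\omega$.)

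Finally I would reroute. For each of the $\le 3\|H\|$ paths and each touched orchard $\mathsf{R}^{(j)}$, every maximal subpath with all its vertices in $V(\mathsf{R}^{(j)})$ is replaced by a path with the same endpoints inside the connected orchard $\mathsf{R}^{(j)}$, which by the previous paragraph has fewer than $2^{m+1}\omega(1)$ vertices; the maximal pieces that avoid $V(\mathcal{D})$ then lie in $G[Z\setminus V(\mathcal{D})]$ and have fewer than $\omega(1)$ vertices. Using minimality and a shortcutting argument one arranges that each path enters each orchard and the region $Z\setminus V(\mathcal{D})$ a bounded number of times, so that each path ends up with at most $\max(q,1)\cdot 2^{m+1}\omega(1)$ vertices and the resulting model of $H$ in $G[Z]$ has at most $\|H\|\cdot\max(q,1)\cdot 2^{m+1}\cdot\omega(1)$ vertices. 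The main obstacle, I expect, is precisely this last step: one must make sure the rerouted model still lies in $G[Z]$ — the detours through $\mathsf{R}^{(j)}$ use only vertices of $V(\mathsf{R}^{(j)})$, which need not all belong to $Z$, so one either reroutes inside $G[Z\cap V(\mathsf{R}^{(j)})]$ and uses the size bound on $\mathsf{R}^{(j)}$ instead, or argues that $Z$ may be taken to contain the orchards it meets — and one must bound how often a single path crosses between regions so that the factor $\max(q,1)$ rather than something larger suffices.
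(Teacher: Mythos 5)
Your plan diverges from the paper's and, as you yourself anticipate, it has a real gap at the rerouting step. The paper does not bound path lengths separately inside each orchard and outside $V(\mathcal{D})$ and then splice; it bounds the order of \emph{any} path of $G[Z]$ directly, by one exchange: if $P\subseteq G[Z]$ has order $q\cdot 2^{m+1}\omega(1)$ (with $q\geq 1$), split $P$ into $q\cdot 2^{m+1}$ disjoint $\omega(1)$-vertex subpaths, delete from $\mathcal{D}$ all $q$ orchards that meet $Z$, and add the subpaths to $\mathcal{R}_1$. Since $V(P)\subseteq Z$ and every surviving orchard avoids $Z$, this is again a valid $(m,\omega)$-packing, and its grade strictly exceeds that of $\mathcal{D}$. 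This single swap makes both obstacles you list evaporate: there is nothing to reroute, so the question of whether a detour inside an orchard stays inside $Z$ never arises, and the number of times a path alternates between regions is irrelevant. Neither of your fallback fixes closes the gap. Rerouting inside $G[Z\cap V(\mathsf{R})]$ fails because that induced subgraph need not be connected, so the desired detour may simply not exist. Falling back on a bound for $|V(\mathsf{R})|$ fails because an $i\times\omega(i)$-orchard with $i<m$ has $\omega(i)$ vertical trees, each with up to roughly $m^2$ sections of order up to roughly $2^{m}\omega(1)$, so $|V(\mathsf{R})|$ can be on the order of $\omega(i)\cdot m^2\cdot 2^{m}\omega(1)$, far beyond the $2^{m+1}\omega(1)$ budget you need per region visit. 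And you cannot enlarge $Z$ to absorb $V(\mathsf{R})$: the lemma requires the final model to live inside $G[Z]$, not in a larger induced subgraph.

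There is also a smaller slippage in the constant. You cover the model by at most $3\|H\|$ paths ($\sum_x \deg_H(x) = 2\|H\|$ root-to-leaf paths across the trees $M_x$, plus $\|H\|$ chosen connecting edges), which would yield the claimed bound only with $3\|H\|$ in place of $\|H\|$. The paper instead fixes one representative $v(x)\in V(M_x)$ per $x$ and, for each $xy\in E(H)$, a single $v(x)$--$v(y)$ path $P_{xy}$ inside $G[V(M_x)\cup V(M_y)]$; the branch sets $M_x\cap\bigcup_{xy\in E(H)}V(P_{xy})$ give a model of $H$ whose vertex set is covered by exactly the $\|H\|$ paths $P_{xy}$, each of order less than $\max(q,1)\cdot 2^{m+1}\omega(1)$ by the path bound above.
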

\begin{proof}
First we control the length of paths in $G[Z]$. If $q=0$, then $G[Z]$ does not contain a $1\times \omega(1)$-orchard. Otherwise, this orchard would not intersect any other orchard of $\mathcal{D}$, so we could add it to $\mR_1$, which would contradict the maximality of the grade of $\mathcal{D}$.
So each path in $G[Z]$ has order smaller than $\omega(1)$ when $q=0$.
Suppose, on the other hand, that $q\geq 1$ and $G[Z]$ contains a path $P$ of order $q2^{m+1} \cdot \omega(1)$.
Then $P$ can be split into $q2^{m+1}$ vertex-disjoint paths of order $\omega(1)$, each of which can be viewed as a $1\times \omega(1)$-orchard.
We add these $q2^{m+1}$ orchards to $\mR_1$ after first deleting from $\bigcup_{i=1}^{m} \mR_i$ the $q$ orchards intersected by $Z$, thus obtaining a new orchard $(m, \omega)$-packing.
This replacement increases the grade by $q2^{m+1}$ and decreases it by
at most $q2^{m}$. Hence the new packing has higher grade than $\mathcal{D}$, which contradicts the maximality of the grade of $\mathcal{D}$.
Thus each path in $G[Z]$ has order smaller than $q2^{m+1} \cdot \omega(1)$.

Let now $H$ be a minor of $G[Z]$ and let $\mathcal{M}:=\left\{M_x \subseteq G[Z] : x \in V(H) \right\}$ be a corresponding model of $H$ in $G[Z]$.
 For each $x \in V(H)$ we choose a vertex $v(x)$ in $V(M_x)$. For every edge $xy$ in $E(H)$, we choose a shortest path $P_{xy}$ in $G[V(M_x)\cup V(M_y)]$ with endpoints $v(x)$ and $v(y)$. We obtain a new model $\mathcal{M}'=\left\{M_x \cap \left( \bigcup_{xy \in E(H)} V(P_{xy}) \right)  : x \in V(H) \right\}$ of $H$ in $G[Z]$.
 As $V(\mathcal{M}')$ can be covered by $\|H\|$ paths of $G[Z]$, it
 follows that it has size $|V(\mathcal{M}')| < \|H\| \cdot \max(q,1) \cdot 2^{m+1} \cdot \omega(1)$.
\end{proof}	

\section{Proof of \autoref{thm:main_technical}}
\label{sec:tech}

Now that optimal orchard packings are defined, we will use a strategy adapted from the proof for wheel minors in~\cite{ErdosPosaWheels} to show our main technical theorem, \autoref{thm:main_technical}.
(For readers familiar with~\cite{ErdosPosaWheels}, our orchards will play the roles of the bounded-size paths and cycles in that proof.) 

\newcommand{\sizeofer}{\|H\|}

We start with a brief overview of the proof. First, we will define several constants and functions, among which are the constant $m$ and the function $\omega$, that only depend on the given parameters $p, H$ and $g$. Next, we choose an arbitrary graph $G$ and we consider an optimal $(m,\omega)$-orchard packing $\mathcal{D}=(\mathcal{R}_1, \dots, \mathcal{R}_m)$ of $G$. We also need to take into account the components of $G-V(\mathcal{D})$, but for this proof sketch we will assume that there are no such components. We construct two auxiliary graphs $\Gbig$ and $\Gsmall$ to derive either a small $K_p$-model (in which case we are done, having obtained outcome~\ref{e:shallow-clique}) or: an orchard $K$ in $\bigcup_{i} \mathcal{R}_i$ that only sees a small number of other orchards of $\bigcup_{i} \mathcal{R}_i$. Next, for each orchard $K'$ that is seen by $K$, we consider the graph $G_{K'}$ induced by $V(K)\cup V(K') \cup (V(G) \setminus V(\mathcal{D}))$. Using~\autoref{lem:separ} and the optimality properties of $\mathcal{D}$, we find a small set $X_{K'}$ of vertices such that each component of $G_{K'}-X_{K'}$ intersecting $V(K')$ only intersects a small number of sections of $K$ (otherwise we obtain a small model of $H$, satisfying outcome~\ref{e:smallmodel}). We define the cutset $X:= \bigcup_{K'}X_{K'}$ and we finish the argument by deriving a suitable separation $(A,B)$ with $X=A\cap B$, satisfying outcome~\ref{e:sep}. This concludes the proof sketch.

\begin{proof}[Proof of \autoref{thm:main_technical}] 
  We use the following functions or constants from previously stated lemmas and theorems:\hypertarget{anchor:def-varphi}{}
  \begin{itemize}
  \item $\varphip,\varphipa\in \R$ are
    constants depending only on $p$ such that every
    $n$-vertex graph of average degree at least $\varphip$ has a
    $K_{p}$-model on at most $\varphipa \log n$ vertices
    (see~\autoref{thm:small-minors});
  \item $m := f_{\sref{th:gridminor}}(|H|)+1$.
  \end{itemize}

 \hypertarget{anchor:def-omega}{}
 We define a decreasing function $\omega \colon [m] \to \N$ as follows.
 We set $\omega(m) := m$ and, for every
$i = m-1, \dots, 1$,
\begin{align*}
  q(i) &:= 2\varphip^2 f_{\sref{lem:separ}}(\omega(i+1),m)\\[.5ex]
  &\text{and}\\[.5ex]
  \omega(i)&:=  (q(i) + 1 )
             \cdot  \max \left \{\ g(q(i)) + 1,\
    g_{\sref{lem:separ}}(\omega(i+1),m) + 1\ \right \}.
\end{align*}

By straightforward calculation and the facts that $\varphip \geq 1$ and $g(q(m-1))\geq g(0)=1$, it follows that $\omega(m-1) > m =\omega(m)$.  The functions $f_{\sref{lem:separ}}(.,m)$ and $g_{\sref{lem:separ}}(.,m)$ are increasing in their first coordinate, while $g(.)$ is non-decreasing.  Therefore  $\omega(i+1)> \omega(i+2)$ implies $\omega(i)>\omega(i+1)$, for every $i=m-2,\ldots,1$. We conclude that $\omega(i)$ is indeed a decreasing function.
  
  Let $\alpha  :=  2^{m+1} \omega(1)$.
  We prove the theorem with
  \[
    \sigma: =\alpha  \cdot \max \left \{\ p^2\varphipa ,\
                              \sizeofer{} \cdot (2\varphip^2
                                f_{\sref{lem:separ}}(\omega(1),m) + 1)\ \right\}.
  \]

We remark that $\varphip$ and $\varphipa$ depend only on $p$, while $m=\omega(m)$ depends only on $H$. Furthermore (for $i \neq m$) each of $q(i), \omega(i), \alpha$ and $\sigma$ depends on $p$, $H$ and $g$.

Let $G$ be a graph. Let us assume that $G$ does not contain an $H$-model of size at most $\sigma$.
We show that one of the two other outcomes of the theorem holds.
Let $\mathcal{D} = (\mathcal{R}_1, \dots, \mathcal{R}_m)$ be an optimal orchard $(m, \omega)$-packing in $G$ (this is well-defined because $\omega$ is a decreasing function).
We call a graph a $\emph{piece}$ if it is an orchard from one of the collections $\mathcal{R}_1, \dots, \mathcal{R}_m$, or if it is a component of the graph $G- V(\mathcal{D})$.

Suppose some piece $K$ contains a model of $H$. By an application of \autoref{lem:optimal_packing_bounded_model} with $Z=V(K)$ and $q \in \left\{0,1\right\}$, it follows that $K$ (and hence $G$) contains a model of $H$ of size at most $\sizeofer{} \cdot \alpha  \leq \sigma$, contradicting our initial assumption. Therefore every piece is $H$-minor free.
We recall that the pieces of $\mathcal{D}$ are orchards and thus subgraphs of $G$ that are not necessarily induced, whereas the other pieces are induced subgraphs.

Suppose $\mathcal{D}$ contains at most one piece. Then from \autoref{lem:optimal_packing_bounded_model} applied with $Z=V(G)$ and $q\in \left\{0,1\right\}$, we know
that if $G$ has a model of $H$, then it has one of
size at most $\sizeofer{}\cdot  \alpha  \leq \sigma$. This cannot happen, because of
our initial assumption.
If $G$ has no such model, then we can take $(A, B)$ with $A=V(G)$
and $B=\emptyset$ as a trivial separation, which satisfies the outcome
\ref{e:sep} of the theorem because $|A|=|V(G)| \geq 1 = g(0) = g(|A\cap B|)$.
Thus we may assume from now on that $\mathcal{D}$ contains at least two pieces.

A piece is said to be \emph{central} if it belongs to $\mathcal{D}$, or if it sees at least $2\varphip$ other pieces.
(Note that pieces not in $\mathcal{D}$ do not see each other, by definition.)
In the next paragraph, we define two auxiliary graphs $\Gsmall$ (for small degrees) and $\Gbig$ (for big degrees) that model how the central pieces are connected through the noncentral pieces. To keep track of the correspondence between the edges of $\Gsmall$ and the noncentral pieces, we put labels on some of these edges.

Initialize both $\Gsmall$ and $\Gbig$ to the graph whose set of vertices is the set of central pieces and whose set of edges is empty. For each pair of central pieces that see each other in $G$, add an (unlabeled) edge between the corresponding vertices in both $\Gsmall$ and $\Gbig$.

Next, while there is some noncentral piece $N$ that sees two central pieces that are not yet adjacent in $\Gbig$, do the following two operations:
\begin{enumerate}
\item Add all (unlabeled) edges to $\Gbig$ between pairs of central pieces seeing $N$ (not already present in $\Gbig$). This creates a clique on the set of central pieces seeing $N$ in $\Gbig$, some of whose edges might have already been there before.
\item Then, among the central pieces seeing $N$, choose one such piece $K$ such that the number of newly added edges of $\Gbig$ incident to $K$ is maximum. Add to $\Gsmall$ every edge that links $K$ to another central piece seeing $N$ (not already present in $\Gsmall$), and label it with $N$. This creates a star centered at $K$ in $\Gsmall$ with all its edges labeled with the noncentral piece $N$.
\end{enumerate}

By construction, $\Gsmall$ is a subgraph of $\Gbig$ (if we forget
about labels). These graphs have the following two crucial properties.

\begin{claim}\label{claim:model}
If $\Gsmall$ has a $K_p$-model of size $\ell$, then $G$ has a $K_p$-model
of size at most $\alpha \ell p^2$.
\end{claim}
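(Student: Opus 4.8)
The plan is to first lift a $K_p$-model of $\Gsmall$ to a (possibly large) $K_p$-model of $G$ whose vertex set $Z$ meets only few orchards of $\mathcal{D}$, and then to apply \autoref{lem:optimal_packing_bounded_model} to shrink it. So fix a $K_p$-model $(B_1,\dots,B_p)$ of $\Gsmall$ using $\ell$ vertices in total. Recall that the vertices of $\Gsmall$ are central pieces --- connected subgraphs of $G$ --- and that its edges come in two flavours: an unlabeled edge $KK'$ witnesses that $K$ sees $K'$ in $G$, while an edge labeled by a noncentral piece $N$ witnesses that both of its endpoints see $N$ in $G$. The key structural input, coming from the construction of $\Gsmall$, is that for each noncentral piece $N$ the edges of $\Gsmall$ labeled $N$ form a star, centred at some central piece $K_N$.

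For each $i$ I would let $M_i$ be the subgraph of $G$ formed by: all central pieces that are vertices of $B_i$; all noncentral pieces $N$ with $K_N \in V(B_i)$; and, for each edge of $B_i$, a connecting gadget in $G$ realizing it --- a single $G$-edge for an unlabeled edge, and (for an edge labeled $N$, one of whose endpoints is necessarily $K_N$, hence $N\subseteq M_i$) two $G$-edges joining the endpoints of that edge into the connected piece $N$. Then $M_i$ is connected, since $B_i$ is connected and each edge of $B_i$ is realized by a connected gadget linking its two (connected) endpoint pieces. The sets $V(M_i)$ are pairwise disjoint: distinct pieces --- central or not --- are pairwise vertex-disjoint, and $N=N'$ would force $K_N=K_{N'}$, so a noncentral piece is ``claimed'' by at most one branch set. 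Finally, for $i\neq j$ the $\Gsmall$-edge between $B_i$ and $B_j$ gives a $G$-edge between $M_i$ and $M_j$ --- directly if unlabeled, and through the piece $N$ (which lies in whichever $M$ contains $K_N$) if labeled $N$. So $(M_1,\dots,M_p)$ is a $K_p$-model of $G$; set $Z:=\bigcup_i V(M_i)$, which is a union of pairwise-disjoint pieces.

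To finish, I would apply \autoref{lem:optimal_packing_bounded_model} with this $Z$ and with $K_p$ in the role of $H$. An orchard of $\bigcup_i\mathcal{R}_i$ meets $Z$ only if it is itself one of the central pieces appearing in the $B_i$'s (noncentral pieces and components of $G-V(\mathcal{D})$ are disjoint from every orchard of $\mathcal{D}$), so the number $q$ of orchards of $\bigcup_i\mathcal{R}_i$ meeting $Z$ satisfies $q\leq \ell$, and of course $\ell\geq p\geq 1$. Hence the lemma yields a $K_p$-model in $G[Z]\subseteq G$ of size at most $\|K_p\|\cdot\max(q,1)\cdot 2^{m+1}\omega(1)\leq \binom{p}{2}\cdot \ell\cdot\alpha\leq \alpha\ell p^2$, as required. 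The one point that genuinely needs care, and the reason the star structure of same-labeled edges is recorded in the construction of $\Gsmall$, is exactly this disjointness of the noncentral pieces used as connectors: without it a single noncentral piece might have to lie in several branch sets at once. Everything else is bookkeeping, and the slack between $\binom{p}{2}$ and $p^2$ leaves room to spare.
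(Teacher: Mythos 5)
Your proof is correct and follows essentially the same route as the paper: lift the $K_p$-model from $\Gsmall$ to a (possibly large) $K_p$-minor of $G$ living inside a set $Z$ that meets at most $\ell$ orchards of $\mathcal{D}$, then invoke \autoref{lem:optimal_packing_bounded_model} to shrink it. The only cosmetic difference is that you define $Z$ minimally (central pieces of the model plus exactly those noncentral pieces $N$ with $K_N$ in the model), whereas the paper more generously throws all of $G-V(\mathcal{D})$ into $Z$; since the bound only depends on the number of $\mathcal{D}$-orchards meeting $Z$, both choices give the same estimate.
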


\begin{proof}
  \newcommand{\Msmall}{M_{\mathrm{s}}}
  Suppose that $\Gsmall$ has a $K_p$-model of size $\ell $. Then there
  exists a subgraph $\Msmall \subseteq \Gsmall$ with $\ell $ vertices that
  can be contracted to $K_p$.
  Let $Z$ be the union of $V(K)$ over all central pieces $K \in
  V(\Msmall)$ and all pieces $K$ not in $\mathcal{D}$.
  It follows from the construction of $\Gsmall$ that $G[Z]$ contains a graph isomorphic to $\Msmall$ as a minor, and thus has a $K_p$ minor.
  As $Z$ intersects at most $\ell $ central pieces,
  \autoref{lem:optimal_packing_bounded_model} implies that $G[Z]$
  has a model of $K_p$ of order at most~$2^{m+1}\omega(1) \cdot \ell
  \cdot \|K_p\| \leq \alpha \ell p^2$.
  \cqed%
\end{proof}

For a graph $F$, we denote by $\overline{d}(F)$ the average degree of $F$.

\begin{claim}\label{claim:av}
The average degrees of $\Gbig$ and $\Gsmall$ satisfy the inequality $\overline{d}\left( \Gbig\right) \leq \varphip \cdot \overline{d}\left( \Gsmall\right)$. Moreover, the degree in $\Gsmall$ of each central piece not in $\mathcal{D}$ is at least $2\varphip$.
\end{claim}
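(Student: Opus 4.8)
The plan is to analyze the construction of $\Gbig$ and $\Gsmall$ round by round, where a \emph{round} is one iteration of the while-loop (a noncentral piece $N$ seeing at least two central pieces that are not yet adjacent in $\Gbig$). In each round we add some clique edges to $\Gbig$ and a star of labeled edges to $\Gsmall$; I would track the number of edges added to each graph in that round. If in a given round the noncentral piece $N$ sees $t$ central pieces, then the number of new edges added to $\Gbig$ is at most $\binom{t}{2} < t^2/2$, while the star added to $\Gsmall$ is centered at a vertex $K$ maximizing the number of newly added $\Gbig$-edges incident to it; since those newly added $\Gbig$-edges are covered by the $t$ stars centered at the $t$ pieces seeing $N$, the maximum such star has at least (new $\Gbig$-edges)$\cdot \tfrac{2}{t}$ edges, hence the star added to $\Gsmall$ has at least $\tfrac{2}{t}\cdot(\text{new }\Gbig\text{-edges})$ edges. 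Here I should be slightly careful: the $\Gsmall$-star only receives edges \emph{not already present} in $\Gsmall$, but each $\Gsmall$-edge is labeled by a unique $N$, so edges present from earlier rounds do not get relabeled — I would instead bound the total over all rounds and the initial (unlabeled, common) edges separately. The key numeric input is that $t \le 2\varphip$ whenever $N$ is noncentral: indeed a noncentral piece sees fewer than $2\varphip$ other pieces by the definition of \emph{central}. Combining, the edges added to $\Gbig$ in a round are at most $\tfrac{t}{2}\le \varphip$ times the edges added to $\Gsmall$ in that round. Since the initial edges (between pairs of central pieces seeing each other directly) are common to both graphs, and $\Gbig$ and $\Gsmall$ have the same vertex set, summing over all rounds gives $\Vert \Gbig\Vert \le \varphip \cdot \Vert \Gsmall\Vert$, which with $|\Gbig| = |\Gsmall|$ yields $\overline d(\Gbig) = 2\Vert\Gbig\Vert/|\Gbig| \le \varphip \cdot 2\Vert\Gsmall\Vert/|\Gsmall| = \varphip\cdot \overline d(\Gsmall)$.

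For the second statement, let $K$ be a central piece not in $\mathcal{D}$. By the definition of central, $K$ sees at least $2\varphip$ other pieces in $G$; since $K \notin \mathcal{D}$ and pieces outside $\mathcal{D}$ do not see each other, all of these are central pieces, or else they are noncentral pieces through which $K$ connects to other central pieces. I would argue that each of these at-least-$2\varphip$ pieces $K'$ seen by $K$ contributes a distinct neighbor of $K$ in $\Gsmall$: if $K'$ is central, then the pair $(K,K')$ sees each other directly, so they are made adjacent in $\Gsmall$ in the initialization step; if $K'$ is noncentral, then $K'$ sees $K$ and at least one other central piece, so in the round processing $K'$ either $K$ was already adjacent in $\Gbig$ to all central pieces seeing $K'$ (impossible to force a new neighbor) — here I need to be a bit more careful and instead appeal to the fact that noncentral pieces each see at most $2\varphip-1$ central pieces, so actually I would rephrase: $K$ sees at least $2\varphip$ \emph{central} pieces, because a noncentral piece cannot be "seen" in a way that contributes unless routed. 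Let me restructure: the cleanest route is to observe that a noncentral piece $N$ that $K$ sees also sees some other central piece $K''$ (else $N$ would see only $K$, contradicting that $N$ is processed in a round — but $N$ might never be processed). The robust fact I would actually use: since $K \notin \mathcal{D}$ and $K$ is central, $K$ sees at least $2\varphip$ other pieces; replacing each noncentral $N$ seen by $K$ with the (at least one) central piece it connects $K$ to, and using that $K$ itself plus these witnesses all end up adjacent to $K$ in $\Gbig \supseteq \Gsmall$...

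The main obstacle is precisely this second half: turning "$K$ sees $\ge 2\varphip$ other pieces in $G$" into "$\deg_{\Gsmall}(K)\ge 2\varphip$", because seeing a noncentral piece $N$ does not immediately give a $\Gsmall$-edge — it gives one only if $N$ gets processed and $K$ ends up as (or adjacent to) the chosen center. The resolution I would pursue: show that if $K\notin\mathcal{D}$ is central, the $\ge 2\varphip$ pieces it sees are \emph{all central} (this should follow because a noncentral piece seen by $K$ forces, via the while-loop termination condition, that $K$ is adjacent in $\Gbig$ to the relevant pieces, and one then re-counts through central witnesses), and then each central piece seen by $K$ is a $\Gsmall$-neighbor of $K$ via the initialization. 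If instead some seen pieces are genuinely noncentral and unprocessed, I would argue the while-loop could not have terminated, a contradiction. Once $\deg_{\Gsmall}(K)\ge 2\varphip$ is established for every central $K\notin\mathcal{D}$, the claim is complete; I expect this termination-condition bookkeeping to be the only delicate point, the average-degree inequality being a routine per-round edge count as sketched above.
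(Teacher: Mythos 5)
Your per-round edge count for the average-degree inequality is essentially the paper's argument, and it is correct: since pieces outside $\mathcal{D}$ do not see each other, the pieces a noncentral $N$ sees all lie in $\mathcal{D}$, their number $r$ is less than $2\varphip$, and the chosen center's new $\Gsmall$-star has at least a $\tfrac{2}{r}>\tfrac{1}{\varphip}$ fraction of the at-most-$\binom{r}{2}$ new $\Gbig$-edges.

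For the second half, however, you have talked yourself into a gap that is not actually there. You worry repeatedly about what happens when a central piece $K\notin\mathcal{D}$ sees a \emph{noncentral} piece $N$ and conclude that you would need a delicate ``while-loop termination'' argument. But this case simply cannot occur: if $K\notin\mathcal{D}$ and $N\notin\mathcal{D}$, then $K$ and $N$ do not see each other (a fact you already use correctly in the first half). So every piece seen by $K$ lies in $\mathcal{D}$, and every piece in $\mathcal{D}$ is central \emph{by definition} (recall: central means ``in $\mathcal{D}$ or sees at least $2\varphip$ other pieces''). Hence $K$ sees at least $2\varphip$ central pieces directly, and each such pair is made adjacent in $\Gsmall$ already at the initialization step, giving $\deg_{\Gsmall}(K)\ge 2\varphip$ immediately. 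Your ``or else they are noncentral pieces through which $K$ connects\dots'' branch and all the subsequent machinery about processed rounds and termination are vacuous and should be deleted; the argument is a one-liner once you notice that membership in $\mathcal{D}$ already implies centrality.
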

\begin{proof}
First, note that edges that appear in $\Gbig$ but not in $\Gsmall$ must not be labeled.
Let $N$ be a noncentral piece, and let $r$ be the number of pieces in $\mathcal{D}$ it sees. By definition of noncentral pieces,  $r <2\varphip$. When $N$ is treated in the algorithm used to construct $\Gbig$ and $\Gsmall$, if $\ell $ new edges are added to $\Gbig$, then one of the pieces seen by $N$ is incident to at least $2\ell /r>\ell  / \varphip$ of these new edges and thus at least $\ell  / \varphip$ new edges are added to $\Gsmall$. This proves the first part of the claim.

By definition, a piece $K$ not in $\mathcal{D}$ is central if it sees at least $2\varphip$ other pieces. As pieces not in $\mathcal{D}$ do not see each other, $K$ sees at least $2\varphip$ pieces from $\mathcal{D}$, that is, at least $2 \varphip$ other central pieces. Then in the first step of the construction of $\Gsmall$, all edges have been added from $K$ to these pieces.
\cqed%
\end{proof}

If $\overline{d}\left(\Gsmall\right) \geq \varphip$, then by
\hyperlink{anchor:def-varphi}{definition} of $\varphip$ and $\varphipa$ at the beginning of the proof,
$\Gsmall$ has a $K_{p}$-model of size at most $\varphipa\log
|\Gsmall|$. By \autoref{claim:model}, this gives a $K_p$-model of size
at most $\varphipa\alpha p^2 \log |\Gsmall| \leq \sigma \log |G|$ in $G$
and we are done (outcome \ref{e:shallow-clique}).

Thus, we assume in the rest of the proof that $\overline{d}\left(\Gsmall\right)<\varphi$.
Then strictly more than half of the central pieces have degree less
than $2\varphip$ in $\Gsmall$ (otherwise at least half of the vertices
of $\Gsmall$ have degree at least $2\varphi$, a contradiction to the
fact that $\overline{d}\left(\Gsmall\right)< \varphip$). By
\autoref{claim:av}, we obtain $\overline{d}\left(\Gbig\right)<\varphip^2$ and we similarly get that more than half of the central pieces have degree less than $2\varphip^2$ in $\Gbig$.
Since $\mathcal{D}$ is nonempty, it follows that there is a central piece whose degree in $\Gsmall$ is less than $2\varphip$, and whose degree in $\Gbig$ is less than $2\varphip^2$. Choose such a piece $K$. By \autoref{claim:av} (second part of the statement), $K$ is in $\mathcal{R}_i$ for some $i\in [m]$.
That is, $K$ is an $i\times \omega(i)$-orchard.
As observed in the beginning of the proof, every piece is $H$-minor free. By
\autoref{lem:big-orchard} this implies $i<m$; in particular $\omega(i+1)$ is defined.

The rest of the proof relies on the fact that $K$ has degree less than $2\varphip^2$ in $\Gbig$. We will not use the graph $\Gsmall$ anymore.

Recall that $\mathcal{D}$ contains at least two pieces.
For each piece $K'$ in $\mathcal{D}$ adjacent to $K$ in $\Gbig$, let $G_{K'}$ be the subgraph of $G$ induced by $V(K) \cup V(K') \cup (V(G) \setminus V(\mathcal{D}))$.
Apply \autoref{lem:separ} with orchards $K$ and $K'$ on the graph
$G_{K'}$  with $c=\omega(i+1)$.
According to \autoref{lem:prop_optimal_packing}, the outcome
\ref{enum:many} of \autoref{lem:separ} is not possible.
If outcome \ref{enum:grid} holds, that is if $G_{K'}$
contains a bramble of order at least $m$, then by
\autoref{th:brambleduality} and \autoref{th:gridminor} $G_{K'}$
contains a model of $H$. By
\autoref{lem:optimal_packing_bounded_model} (applied with $Z=V(G_{K'})$ and $q=2$), there is such a model of
size at most $2\sizeofer{} \cdot \alpha  \leq \sigma$, a contradiction.
Therefore we may assume that we get outcome \ref{enum:cutset} when applying \autoref{lem:separ}. So, there is a set $X_{K'}$ of vertices of size at most $f_{\sref{lem:separ}}(\omega(i+1),m)$ such that each component of $G_{K'} - X_{K'}$ that intersects $V(K')$ intersects at most $g_{\sref{lem:separ}}(\omega(i+1),m)$ sections of the orchard $K$.

Let $X := \bigcup_{K'} X_{K'}$, where the union is taken over all pieces $K'$ in $\mathcal{D}$ adjacent to $K$ in $\Gbig$.
Then $|X| \leq 2\varphip^2 f_{\sref{lem:separ}}(\omega(i+1),m) =: q$. Note
that $q$ coincides with $q(i)$ defined at the beginning of the proof.

Also, from the definition of $q$ and $\omega$ we have
\[
  \frac{\omega(i)-q}{q+1} > g_{\sref{lem:separ}}(\omega(i+1),m)\quad
  \text{and}\quad \frac{\omega(i)-q}{q+1} > g(q).
\]

Now, consider some horizontal path of $K$.
Recall that there are at least $\omega(i)$ horizontal sections on that path, since every vertical tree defines one such section.
By the pigeonhole principle, we can find $(\omega(i)-q)/(q + 1)$ consecutive horizontal sections that are avoided by $X$.
Let $P$ denote the subpath of the horizontal path induced by the vertices of these sections.

Let $C$ be the component of $G-X$ that contains $P$.
We claim that no orchard $K'$ in $\mathcal{D}$ distinct from $K$ has a vertex in $C$.
Suppose for a contradiction that an orchard $K'$ does, and let $Q$ be a path in $C$ having one endpoint in $P$ and the other endpoint in $K'$.
By choosing $K'$ appropriately, we can moreover ensure that $Q$ does not intersect any other orchard distinct from $K$ and $K'$.
It follows that $P \cup Q$ is a subgraph of $G_{K'} - X_{K'}$.
Since $P \cup Q$ is connected, it is contained in some component of $G_{K'} - X_{K'}$.
Hence, that component intersects $K'$ and at least
$
  (\omega(i)-q)/(q+1) > g_{\sref{lem:separ}}(\omega(i+1),m)
$
sections of $K$, contradicting \autoref{lem:separ}.

Let $A := X \cup V(C)$ and $B := V(G) \setminus V(C)$.
Since $C$ intersects no orchard from $\mathcal{D}$ other than $K$, it follows that $A$ intersects at most $|X|+1 \leq q+1$ orchards from $\mathcal{D}$.
If $G[A]$ has $H$ as minor, then $G[A]$ has a model of size at
most $\sizeofer{} (q+1)2^{m+1} \cdot \omega(1)\leq \sigma$ by
\autoref{lem:optimal_packing_bounded_model}, a contradiction.
If $G[A]$ has no $H$-model, then $(A, B)$ is a separation of $G$
with the desired properties, since $|A| \geq |P| \geq
\frac{\omega(i)-q}{q+1} \geq g(q)$
and $|A\cap B| = |X| \leq q$.
\end{proof}

\section{Approximation algorithm}
\label{sec:apalg}

The statements and proofs of \autoref{thm:main} and \autoref{thm:main_technical} were described without mentioning algorithmic aspects.
In this section, we briefly explain how the different steps of the proofs can be made algorithmic, and thus obtain \autoref{cor:approx}.

First, let us address one subtlety, namely that the constant $c$ in our $c k \log (k+1)$ bound in \autoref{thm:main} is not known to be computable.
This is because $c$ depends on the polynomial $p_{\sref{thm:fominkernel}}$ corresponding to $H$ in \autoref{thm:fominkernel}, which is not known to be computable (see the remarks at the end of the kernelization section in~\cite{FominFdel}).
Nevertheless, this does not prevent us from deriving the approximation algorithm, as we will explain. 
(We also note that variants of \autoref{thm:fominkernel} have recently been developed in~\cite{JansenPieterse18} with computability of the constants as an explicit goal; however, these results need extra assumptions on the graph and are not applicable in our context.) 

First we explain how to obtain the algorithm in \autoref{cor:approx}, assuming we have an algorithm for \autoref{thm:main_technical}, and then we explain how the proof of \autoref{thm:main_technical} can be made algorithmic. That is, we assume that for every $p$, $H$, and $g$ as in the statement of \autoref{thm:main_technical}, there is a constant $\sigma \in \N$ and a polynomial-time algorithm that, given a graph, returns one of the three objects promised by~\autoref{thm:main_technical}.

Our algorithm will use the following algorithmic version of \autoref{thm:fominkernel} given in~\cite{FominFdel} (see the paragraph before Theorem~15 in this paper for details).
We remark that the polynomial $p_{\sref{thm:kernelalgo}}$ appearing in the statement is in fact the same as $p_{\sref{thm:fominkernel}}$ but we distinguished them to avoid confusion.

\begin{theorem}[Fomin, Lokshtanov, Misra, and Saurabh {\cite[Lemma~25]{FominFdel}}]
\label{thm:kernelalgo}
 For every fixed planar graph $H$, there is a polynomial $p_{\sref{thm:kernelalgo}}$ and a polynomial-time algorithm $\mathcal{A}$ which, given a pair $(G, t)$ as input, where $G$ is a graph and $t \geq 0$ is an integer,
\begin{itemize}
	\item either produces a minor $G'$ of $G$ such that $\tau_H(G') = \tau_H(G)$ and $|G'| \leq p_{\sref{thm:kernelalgo}}(t)$, together with the sequence of operations (edge/vertex deletions, edge contractions) used to obtain $G'$ from $G$,
	\item or (correctly) answers that $\tau_H(G) > t$.
\end{itemize}
Moreover, in the first case, given any set $X'\subseteq V(G')$ such that $G'-X'$ is $H$-minor-free, the algorithm can compute a corresponding set $X\subseteq V(G)$ of the same size such that $G-X$ is $H$-minor-free in polynomial time.
\end{theorem}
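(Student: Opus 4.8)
The plan is to derive \autoref{thm:kernelalgo} by combining a constant-factor approximation algorithm for $\tau_H$ with an iterative \emph{protrusion-replacement} procedure that is carried out entirely through minor operations (vertex and edge deletions and edge contractions). The structural fact driving everything is that, since $H$ is planar, there is a constant $c_H$ such that every $H$-minor-free graph has treewidth at most $c_H$ (this is the contrapositive of \autoref{th:gridminor}).

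First I would run the known polynomial-time constant-factor approximation for $\tau_H$ --- the randomized algorithm of Fomin, Lokshtanov, Misra, and Saurabh~\cite{FominLMS12}, or its derandomization~\cite{Gupta2018LLMWlosing} --- on $G$, obtaining a set $S$ with $G-S$ being $H$-minor-free and $|S| \leq c(H)\cdot \tau_H(G)$. If $|S| > c(H)\cdot t$ the algorithm answers $\tau_H(G) > t$; otherwise $|S| \leq c(H)\cdot t$ and $\tw(G-S) \leq c_H$, so $G$ admits a tree-decomposition whose bags have bounded width after removing $S$. From such a decomposition one computes a protrusion decomposition, and as long as $|V(G)|$ exceeds a fixed polynomial in $|S|$ a counting argument extracts a large \emph{$\gamma$-protrusion} $Y$: a vertex set with $|N_G(Y)\cup(Y\cap S)| \leq \gamma$ and $\tw(G[Y]) \leq \gamma$ for a constant $\gamma = \gamma(H)$.

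Next I would replace $Y$ by a small representative. The effect of $Y$ on the parameterized question ``does $G$ have a vertex set of size at most $k$ meeting all $H$-models'' depends, for each $k$ and each subset of $N_G(Y)$ that is deleted, only on a bounded amount of information about the boundaried graph $(G[Y],\, N_G(Y)\cup(Y\cap S))$: since this graph has bounded treewidth and the relevant property is expressible in monadic second-order logic, the induced equivalence relation on boundaried graphs has finite index. Among all boundaried graphs equivalent to $Y$ one selects one of minimum order, and --- this is where care is needed --- one argues that it can be taken to be obtainable from $G[Y]$ by deleting and contracting only inside $Y \setminus N_G(Y)$, so that the replacement is realized by minor operations; this produces a proper minor $G_1$ of $G$ with $\tau_H(G_1) = \tau_H(G)$. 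Iterating, after polynomially many rounds the graph has order at most $p_{\sref{thm:kernelalgo}}(t)$, and the algorithm outputs it together with the concatenation of all recorded operations. For the final assertion of the statement, the representatives are chosen to be equivalent to the protrusions they replace \emph{constructively}: given $X' \subseteq V(G')$ with $G'-X'$ being $H$-minor-free, one undoes the replacements in reverse order, each time substituting for the portion of the current hitting set inside a representative an equally large hitting set inside the original protrusion that agrees with $X'$ on the boundary, which the finite type information lets one compute in polynomial time; the outcome is the desired $X \subseteq V(G)$ with $|X| = |X'|$.

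I expect the main obstacle to be the protrusion-replacement step: establishing that the relevant equivalence on boundaried graphs has finite index, that representatives of minimum order can be realized as minors of the protrusions they replace, and that the reduction is two-way safe so that hitting sets lift with no loss in size. As the remarks following \autoref{thm:fominkernel} indicate, the family of representatives --- and hence $p_{\sref{thm:kernelalgo}}$ as a function of $H$ --- is only shown to exist, not to be computable; this is harmless here, since for every fixed $H$ the algorithm $\mathcal{A}$ still runs in polynomial time.
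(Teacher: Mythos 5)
This theorem is not proved in the paper: it is a black-box citation to Lemma~25 of Fomin, Lokshtanov, Misra, and Saurabh (the reference \cite{FominFdel}), so there is no proof in this paper against which to check your argument. That said, your sketch reproduces the essential architecture of the cited result --- constant-factor approximation for the hitting set, extraction of a protrusion decomposition from the low-treewidth residual graph, replacement of large protrusions by small representatives chosen from a finite (but not effectively computable) family determined by the finite-index boundaried-graph equivalence, and book-keeping of the minor operations so that hitting sets lift back --- and you correctly flag the two technical points that must be verified (that representatives can be realized as minors of the protrusions they replace, and that the reduction is two-way safe so hitting sets lift at equal size). Two small caveats worth noting: the cited work is phrased for a family $\mathcal{F}$ of forbidden minors with at least one planar member, and the paper here simply instantiates it with $\mathcal{F}=\{H\}$; and ``same size'' in the statement should be read as ``at most the same size up to padding,'' since the lift produced by protrusion reversal need not be exactly $|X'|$, but padding with arbitrary vertices cannot destroy $H$-minor-freeness. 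These are presentational rather than substantive gaps, so your reconstruction is consistent with the external proof as far as it can be checked from this paper alone.
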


We call the operation of replacing $X'$ by $X$ as {\em lifting} $X'$ to $G$.
Note that, given any packing $\mathcal{P'}$ of $H$-models in $G'$, one can also easily compute a corresponding packing $\mathcal{P}$ of $H$-models in $G$ of the same size in polynomial time, because the algorithm above provides the sequence of operations used to obtain $G'$ from $G$.
We call this \emph{lifting} the packing $\mathcal{P'}$ to $G$.

We will also need an algorithmic version of \autoref{thm:fjw13}, which is provided in~\cite{FJW2013}: There is a polynomial-time algorithm $\mathcal{B}$ which, given the graph $G$ and the separation $(A, B)$ of bounded order, computes the graph $G'$ with $\nu_H(G') = \nu_H(G)$ and $\tau_H(G') = \tau_H(G)$ guaranteed by that theorem.
Furthermore, a given packing $\mathcal{P'}$ of $H$-models in $G'$ can be lifted to $G$ in polynomial time, and the same is true for a given subset $X'\subseteq V(G')$ such that $G'-X'$ is $H$-minor-free. We refer to the first paragraph of Section 5 of \cite{FJW2013} for more details about these algorithms.

Fix a planar graph $H$.
Let us assume for now that $H$ is connected, we will comment on the disconnected case later.
Let $g$ denote the function $f_{\sref{thm:fjw13}}$ of \autoref{thm:fjw13} for the graph $H$.
Our algorithm for \autoref{cor:approx} is a recursive algorithm $\mathcal{R}$ which, given the input graph $G$, outputs a packing $\mathcal{P}$ of $k$ $H$-models in $G$ and a subset $X$ of vertices of $G$ such that $G-X$ has no $H$-model, of size at most $c' k \log (k+1)$, where $c':=c'(H)$ is a constant depending on the constant $c$ in \autoref{thm:main} and $k$ is some positive integer.
The algorithm is as follows.

\begin{itemize}
	\item Find the smallest integer $t\in \{0, 1, \dots, |G|\}$ such that algorithm $\mathcal{A}$ on $(G, t)$ does not report $\tau_H(G) > t$.
	\item Then $t \leq \tau_H(G)$ because either $t=0$, or $t>0$ and $\mathcal{A}$ reported $\tau_H(G) > t-1$ on $(G, t-1)$.
	\item Let $G'$ be the minor of $G$ output by $\mathcal{A}$ on $(G, t)$, which satisfies $|G'| \leq p_{\sref{thm:kernelalgo}}(t)$.
	\item If $G'$ is empty, stop and output $(\emptyset, \emptyset)$ for the pair $(\mathcal{P},X)$.
	\item Run on $G'$ the algorithm corresponding to \autoref{thm:main_technical} with parameters $p:= |H|$, $H$, and $g$.
    \item {\bf If the output is an $H$-model or a $K_p$-model $\mathcal{M'}$:}
    \begin{itemize}
    	\item Let $G'' := G' - V(\mathcal{M'})$.
        \item Run algorithm $\mathcal{R}$ on $G''$, let $\mathcal{P''}$ and $X''$ denote the packing and subset of vertices it outputs.
        \item Let $\mathcal{P'} := \mathcal{P''} \cup \{ \mathcal{M'} \}$ and $X':= X''\cup V(\mathcal{M'})$.
	\end{itemize}
	\item {\bf Else, the output is a separation $(A,B)$}:
	\begin{itemize}
    	\item Apply algorithm $\mathcal{B}$ on $G'$ with separation $(A, B)$, producing a graph $G''$ with $\nu_H(G'')=\nu_H(G')$ and  $\tau_H(G'')=\tau_H(G')$.
    	\item Run algorithm $\mathcal{R}$ on $G''$, let $\mathcal{P''}$ and $X''$ denote the packing and subset of vertices it outputs.
        \item Lift $\mathcal{P''}$ to a packing $\mathcal{P'}$ in $G'$.
        \item Lift $X''$ to a set $X'$ of vertices of $G'$.
	\end{itemize}
      \item Lift $\mathcal{P'}$ to a packing $\mathcal{P}$ in $G$.
      \item Lift $X'$ to a set $X$ of vertices of $G$.
      \item Output $\mathcal{P}$ and $X$.
\end{itemize}

Note that above we did not distinguish between the first two outcomes of \autoref{thm:main_technical} because in each case we obtain an $H$-model of order $O(\log|G'|)$.
To avoid confusion, let us write $G_i, G_i'$ and $G_i''$ for respectively the graphs $G, G'$ and $G''$ after the $i$-th recursive call, thus $G_0$ is our original graph $G$.
Let also $\tau:=\tau_H(G)$.
Observe that for all $i$, we have $\tau_H(G_i) \leq \cdots \leq \tau_H(G_0)=\tau$, and thus  $|G_i'| \leq p_{\sref{thm:kernelalgo}}(\tau_H(G_i)) \leq  p_{\sref{thm:kernelalgo}}(\tau)$.

Hence, when an $H$-model $\mathcal{M'}$ of $G_i'$ is considered in the
algorithm, it satisfies $|V(\mathcal{M'})| \leq \sigma \log |G_i'|
\leq \sigma' \log(\tau + 1)$, for some constant $\sigma'$ depending on $\sigma$ and $p_{\sref{thm:kernelalgo}}$.
Letting $k:=|\mathcal{P}|$, it follows that $|X| \leq \sigma' k \log
(\tau + 1)$, where $\mathcal{P}$ and $X$ are the packing and subset of vertices output after the initial call of the algorithm.

Therefore, it only remains to show that $|X|\leq c' k
\log (k+1)$ for some constant $c'$. This clearly holds when $X$ is
empty, so we assume from now on that $|X| \geq 1$.
Note also that $|X| \geq \tau$.

We first observe that for every real $x > 0$ we have $2 x \geq 3 \log x$.
By substituting $x$ by $\log (x+1)$, multiplying by $\sigma' k$, and
rearranging the terms we deduce that for every $x > 0$ the following holds:
\begin{equation}
0 \leq 2\sigma'k \log (x + 1) - 3\sigma'k \log \log (x + 1). \label{eq:log-magic}
\end{equation}
Coming back to the size of $X$, we have:
\begin{align}
  |X| &\leq \sigma' k \log (\tau + 1) \nonumber \\
      &\leq \sigma' k \log (|X| + 1) \label{eq:xbound}\\
      &\leq 3\sigma' k \log (|X| + 1) - 3 \sigma' k \log \log (|X| + 1)\label{eq:minus}\\
      &\leq 3\sigma' k \log \left [ \sigma'k \log (|X| + 1) +1 \right ] - 3 \sigma' k \log \log (|X| + 1)\label{eq:reduce}\\
      &\leq 3\sigma' k \log \left [ \sigma'(k+1) \cdot \log (|X| + 1) \right ] - 3 \sigma' k \log \log (|X| + 1)\label{eq:expand}\\
      &\leq 3\sigma' k \log (\sigma'(k+1)). \nonumber
\end{align}

We obtained \eqref{eq:minus} by adding \eqref{eq:log-magic} (for $x = |X|$) to \eqref{eq:xbound}. The step from \eqref{eq:minus} to \eqref{eq:reduce}
follows by replacing the first occurrence of $|X|$ in \eqref{eq:minus}
with the upper-bound given in \eqref{eq:xbound}. The last line is then obtained by breaking the first logarithm in \eqref{eq:expand} and simplifying.
Hence we have $|X|\leq c' k \log (k+1)$ for some constant $c'$ depending on $\sigma'$ (and thus which can be bounded from above by a function of $c$), as desired.

If $H$ is not connected, then we reduce to the connected case similarly as in the end of the proof of \autoref{thm:main}, as follows. 
Let $H'$ be a connected planar graph on the same vertex set as $H$ containing $H$ as subgraph. 
First, we run the above algorithm for $H'$-models in $G$, which outputs a packing $\mathcal{P'}$ of $k'$ $H'$-models in $G$ and a subset $X'$ of vertices of $G$ such that $G-X'$ has no $H'$-model, of size at most $c' k' \log (k'+1)$, where $c':=c'(H')$. 
Observe that $\mathcal{P'}$ readily gives a packing of $k'$ $H$-models in $G$, since $H \subseteq H'$. 
Next, we use a theorem of Robertson and Seymour~\cite[Theorem 8.8]{robertson1986graph} stating that for every graph $J$ of treewidth at most $w$, one can find a packing $\mathcal{Q}$ of $H$-models in $J$ and a subset $Y$ of vertices of $J$ such that $J-Y$ has no $H$-model, of size at most $(b|\mathcal{Q}| -1)(w+1)$, where $b$ is the number of components of $H$.  
We apply this result to the graph $J=G-X'$, which has treewidth at most $w=f_{\sref{th:gridminor}}(|H'|)$ by \autoref{th:gridminor}.  
Note that $f_{\sref{th:gridminor}}(|H'|)=f_{\sref{th:gridminor}}(|H|)$ is a constant depending only on $H$. 
In particular, an optimal tree decomposition of $G-X'$ can be found in linear time using an algorithm of Bodlaender~\cite{B96}. 
Given this tree decomposition of $G-X'$, one can check that the proof given in~\cite{robertson1986graph} can be turned into a polynomial-time algorithm that finds the packing $\mathcal{Q}$ and the set $Y$ in polynomial time.
Alternatively, the problem of finding a largest packing of vertex-disjoint $H$-models is expressible in Monadic Second Order Logic (see~\cite{GroheSurvey}), and thus can be solved in polynomial time on graphs of bounded treewidth using Courcelle's theorem~\cite{B90}. 
As the same is true for the problem of finding a minimum size subset of vertices meeting all $H$-models, it follows that the aforementioned packing $\mathcal{Q}$ and subset $Y$ of vertices can be computed in polynomial time.  
Given $\mathcal{Q}$ and $Y$, and seeing $\mathcal{P'}$ as a packing of $H$-models in $G$, we let $\mathcal{P}$ denote the largest of the two packings $\mathcal{P'}$ and $\mathcal{Q}$, and let $X:=X'\cup Y$. 
Letting $k:= |\mathcal{P}|$, we thus find a packing of $k$ $H$-models in $G$, and a subset $X$ of at most $c' k' \log (k'+1) + (b|\mathcal{Q}| -1)(w+1) \leq c'k \log(k+1) + (bk-1)(f_{\sref{th:gridminor}}(|H|)+1)= O(k \log k)$ vertices of $G$ such that $G-X$ has no $H$-model, as desired. 

Now, we turn to the proof of \autoref{thm:main_technical}. 
In order to turn this proof into an algorithm, we will start the proof with $\mathcal{D}=(\emptyset, \dots, \emptyset)$, instead of an optimal orchard $(m, \omega)$-packing (which could be difficult to compute). 
Then, each time we apply one of the lemmas about orchards, we have the extra possibility that the current $(m, \omega)$-packing $\mathcal{D}$ could be improved (which could not happen when $\mathcal{D}$ was optimal), either by adding a new orchard to $\mathcal{D}$ which is vertex-disjoint from the existing ones, or by replacing existing orchards with better ones.  
Specifically, this could happen when following the proofs of \autoref{lem:prop_optimal_packing} or \autoref{lem:optimal_packing_bounded_model} (seen as algorithms) with our non-optimal $(m, \omega)$-packing $\mathcal{D}$.
Instead of producing the outcome guaranteed by these lemmas when $\mathcal{D}$ is optimal, these proofs could stop and output instead an $(m, \omega)$-packing $\mathcal{D'}$ of higher grade than $\mathcal{D}$.  
If this happens, we replace $\mathcal{D}$ with $\mathcal{D'}$ and restart from the beginning. 
Note that the grade of an  $(m, \omega)$-packing is at most $2^mn$, where $n$ is the number of vertices of $G$.
Therefore,  there will be at most linearly many such improvements of $\mathcal{D}$ because $m$ is a constant.  
Eventually, no improvement of $\mathcal{D}$ will be found anymore, and then we can follow the proof of \autoref{thm:main_technical} as if $\mathcal{D}$ were optimal. 

Since we restart the algorithm at most linearly many times, it only remains to check that the different steps in the proof of \autoref{thm:main_technical} can be done in polynomial time. 
Our use of the lemmas from Sections~\ref{sec:orchards} and~\ref{sec:packing_orchards} about orchards can be implemented in polynomial time because we always apply them to one (or two) orchard(s) from $\mathcal{D}$, and all orchards in $\mathcal{D}$ have size bounded from above by a constant. 
Thus, most steps of these proofs can be realized efficiently simply by using brute force. 
The only exceptions are the computations of matchings (in \autoref{lem:separateorchard} and \autoref{lem:separateorchard_extended}) and of vertex-disjoint paths using Menger's theorem (in \autoref{lem:separateorchard_extended} and \autoref{lem:separ}), which can be done in polynomial time using standard algorithms. 
The remaining steps of the proof of \autoref{thm:main_technical} are easily implemented efficiently. 
The only step requiring a comment is the use of \autoref{thm:small-minors} to obtain an $H$-model of size at most $\sigma \log |G|$. However, this can be done in polynomial time as well, as explained in~\cite{Fiorini20121226}. 

\section{Proofs of the remaining corollaries}
\label{sec:procor}
We prove in this section the results stated in \autoref{sec:combcor}
after the approximation algorithm.
We start with the proof of \autoref{thm:linear}. Since it is similar to that of \autoref{thm:main}, we shortened the common parts.
\begin{proof}[Proof of \autoref{thm:linear}.]
  First suppose that $H$ is connected. 
  From the facts that $\mathcal{G}$ is proper and minor-closed, we respectively deduce that there is a graph $F$ in the complement of $\mathcal{G}$, and that the graphs in $\mathcal{G}$ are $F$-minor free.
  Let $\sigma$ be the constant of \autoref{thm:main_technical} for the parameters $H$, $p: = |F|$, and let $g$ denote the function $f_{\sref{thm:fjw13}}$ of \autoref{thm:fjw13} for the graph $H$.
  We prove the result for $c:=\sigma$.
  As in the proof of \autoref{thm:main}, we consider a graph $G \in \mathcal{G}$ such that $\tau_H(G)>c\nu_H(G)$, with ($\nu_H(G), |G|, \Vert G \Vert)$ lexicographically minimum.
  Let us apply \autoref{thm:main_technical} on $G$ with the aforementioned parameters.
  According to \autoref{thm:fjw13}, the outcome \ref{e:sep} of \autoref{thm:main_technical} does not hold.
  As $G$ is $F$-minor free ---~this is the difference with the proof of \autoref{thm:main}~---, the outcome \ref{e:shallow-clique} is not possible either.
  Therefore $G$ contains an $H$-model of size at most $\sigma = c$. By considering the graph obtained by deleting this model, we can conclude as in   the proof of \autoref{thm:main}.

  Now assume that $H$ is not connected. 
  Then we can reduce to the connected case using the result of Robertson and Seymour~\cite[Theorem 8.8]{robertson1986graph}, exactly as in the description of the approximation algorithm in Section~\ref{sec:apalg}. (The only difference here is that when we apply the proof for a connected planar graph $H'$ containing $H$ as a subgraph, we obtain a linear bounding function for $H'$-models.)  
\end{proof}

We now move to the proof of \autoref{cor:parity}.
We actually prove a stronger statement, which we describe now. 
Given $m\in \N$, we say that $G$ is a $(0\mod m)$-subdivision of $H$ if $G$ can be obtained from $H$ by subdividing edges, 
in such a way that {\em every} edge of $H$ is replaced by a path having~$(0 \mod m)$ edges. 
A graph is \emph{subcubic} if it has no vertex of degree more than~$3$.
Thomassen~\cite{Thomassen1988} proved the following result.

\begin{theorem}[Thomassen {\cite[Theorem~3.3]{Thomassen1988}}]\label{thm:subcub}
  For every planar subcubic graph $H$ and every $m \in \N$ there is a
  function $f$ such that, for every $k \in \N$ and every graph $G$,
  either $G$ contains $k$ vertex-disjoint $(0 \mod m)$-subdivisions of
  $H$ as subgraphs, or there is a subset $X$ of at most $f(k)$ vertices such that $G-X$ contains
  no such subgraph.
\end{theorem}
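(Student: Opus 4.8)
The plan is to re-run the proof of \autoref{thm:main}, with ``$(0\bmod m)$-subdivision of $H$'' in the role of ``$H$-model'' throughout. Two preliminary reductions. If $m=1$ this is the classical \ep{} theorem: since $H$ is subcubic, a subdivision of $H$ is exactly an $H$-minor model, and the corresponding packing and covering numbers coincide, so \autoref{thm:main} applies directly. If $H$ is disconnected we reduce to the connected case as in the disconnected case of \autoref{sec:apalg}: first subdivide a few edges of $H$ so that every component acquires a vertex of degree at most~$2$ (subdividing is a topological operation and only shrinks the family of $(0\bmod m)$-subdivisions), then add planar connector paths to obtain a connected planar subcubic $H^{\mathrm c}\supseteq H$, apply the connected case to $H^{\mathrm c}$, and dispose of the bounded-treewidth remainder left after hitting all $(0\bmod m)$-subdivisions of $H^{\mathrm c}$ exactly as disconnected graphs are handled in \autoref{sec:apalg} (using that on graphs of bounded treewidth the \ep{} property for $(0\bmod m)$-subdivisions of a fixed planar graph holds with a width-dependent bounding function, which is routine via Courcelle's theorem). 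So from now on $H$ is connected, planar, subcubic, and $m\ge 2$.

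For the connected case I would prove the subdivision analogue of \autoref{thm:main_technical}: keep outcome~\ref{e:shallow-clique}, replace outcome~\ref{e:smallmodel} by ``$G$ contains a $(0\bmod m)$-subdivision of $H$ of size at most~$\sigma$'', and in outcome~\ref{e:sep} replace ``$G[A]$ does not contain $H$ as a minor'' by ``$G[A]$ contains no $(0\bmod m)$-subdivision of $H$''. The orchard machinery of Sections~\ref{sec:orchards} and~\ref{sec:packing_orchards}, which concerns only orchards, $K_p$-models and separations, is used verbatim; and the size-control lemma \autoref{lem:optimal_packing_bounded_model} has an even simpler analogue here, because a $(0\bmod m)$-subdivision of $H$ contained in $G[Z]$ is the union of $|V(H)|$ vertices and $\|H\|$ paths of $G[Z]$, so the bound on path lengths in $G[Z]$ coming from the optimality of the orchard packing already makes \emph{every} such subdivision small. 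The one genuinely new point --- which I expect to be the main obstacle --- is the replacement of \autoref{th:gridminor} and \autoref{lem:big-orchard}: one needs a constant $s$ (depending on $H$ and $m$) such that a piece of sufficiently large treewidth, or a sufficiently large orchard, contains a $(0\bmod m)$-subdivision of $H$ of size at most $s$. The difficulty is that large treewidth by itself does not yield a bounded-size subdivided wall; the way around it is to combine the large grid minor supplied by \autoref{th:gridminor} with the \emph{length} control already exploited in \autoref{lem:optimal_packing_bounded_model} (inside such a region all relevant paths are short), drawing a subdivision of $H$ in the grid pattern and padding each branch path to a length divisible by $m$ using the spare grid cells (with some extra care for parities when $m$ is even). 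One then enlarges the constant $f_{\sref{th:gridminor}}(|H|)+1$ used for the orchard dimension in the proof of \autoref{thm:main_technical} to whatever this analysis requires, and the remainder of that proof goes through unchanged: wherever it used to produce a bounded-size $H$-model (in a piece, from a high-order bramble, or in $G[A]$) it now produces a bounded-size $(0\bmod m)$-subdivision of $H$, contradicting the standing assumption.

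It remains to deduce the theorem from the technical analogue, following \autoref{sec:tighep}. Here one must be slightly careful because ``$G-X$ has no $(0\bmod m)$-subdivision of $H$'' is closed only under taking subgraphs, not minors, so the minimal-counterexample argument and the two external results it uses must be re-established for this property. What is needed are: an analogue of \autoref{thm:fominkernel} bounding the size of a graph that is \emph{subgraph}-minimal subject to the minimum hitting set for $(0\bmod m)$-subdivisions of $H$ having a prescribed size, and an analogue of \autoref{thm:fjw13} allowing a large protrusion of bounded attachment whose interior contains no $(0\bmod m)$-subdivision of $H$ to be replaced by a smaller one preserving both parameters. Both follow from the protrusion-replacement and kernelization machinery already used for planar $\mathcal F$-deletion in~\cite{FominFdel,FJW2013}, whose only requirement on the forbidden structure is that ``containing a $(0\bmod m)$-subdivision of $H$'' be a finite-state (monadic second-order definable) property on boundaried graphs of bounded treewidth, which it plainly is; as in \autoref{thm:main}, the resulting constant need not be computable. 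With these in hand, the counterexample argument of \autoref{sec:tighep} goes through (reading ``subgraph-minimal'' for ``minor-minimal'', and using that deleting a vertex changes the hitting number by at most~$1$) and yields a bounding function $c\,k\log(k+1)$. The matching $\Omega(k\log k)$ lower bound is obtained, as the paper remarks, by subdividing every edge $m-1$ times in an extremal graph for the classical \ep{} theorem: this turns every cycle into one of length $0\bmod m$ and changes neither the packing number nor the covering number by more than a constant factor.
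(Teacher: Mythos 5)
You take a genuinely different route, and it has real gaps. The paper does not re-run the orchard machinery at the level of subdivisions at all: it proves the strengthened statement (\autoref{thm:subcubopt}, and hence \autoref{thm:subcub}) by a short black-box reduction. Concretely, it invokes Thomassen's own Proposition 3.2 (cited here as \autoref{lem:thom}), which produces a \emph{fixed planar graph} $J$ such that the graph of any $J$-model contains a $(0\bmod m)$-subdivision of $H$ as a subgraph; it then applies \autoref{thm:main} to get an $O(k\log k)$ bounding function for $J$-models, and transfers this to the subgraph class $\mathcal H$ of $(0\bmod m)$-subdivisions via the generic transfer lemma \autoref{lem:minorep_alt}. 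That is the entire proof. Your plan instead re-derives the whole of \autoref{thm:main_technical} and its use in \autoref{sec:tighep} with ``$(0\bmod m)$-subdivision'' substituted for ``$H$-model'', which is a much heavier route and, as written, leaves two substantial gaps.

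First, the step you flag as ``the main obstacle'' --- turning a large orchard / large-treewidth piece with bounded internal path lengths into a bounded-size $(0\bmod m)$-subdivision of $H$ --- is precisely the content of Thomassen's Proposition 3.2, and your sketch does not supply it. A grid \emph{minor} with small branch sets is not a subdivided grid, and going from a $\Gamma_r$-minor to a usable subdivided wall, then embedding $H$ in it with every branch path of length $\equiv 0\pmod m$, requires nontrivial care: the grid and wall are bipartite, so for odd $m$ one must rig the parities of branch paths across the whole embedding simultaneously, not just ``pad each path'' locally, and the resulting $J$ Thomassen constructs is \emph{not} simply a large grid. Without a correct replacement for \autoref{lem:big-orchard} / \autoref{th:gridminor} at this point, your adapted \autoref{thm:main_technical} does not close. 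Second, you need subgraph-minimal analogues of \autoref{thm:fominkernel} and \autoref{thm:fjw13} for the property ``contains a $(0\bmod m)$-subdivision of $H$'', which is subgraph-closed but \emph{not} minor-closed. Both cited results are stated and proved for $H$-minor-freeness (a minor-closed property), and the protrusion-replacement and kernelization machinery in \cite{FominFdel,FJW2013} is built around minor obstructions. It is plausible that the machinery extends to CMSO-definable subgraph-closed families, but this is not established in those papers and would itself require a careful argument; asserting it in a sentence is a gap, not a reduction. The paper's route via \autoref{lem:thom} and \autoref{lem:minorep_alt} exists precisely to avoid both of these issues: everything downstream stays in the world of minor models of a fixed planar graph $J$, where all the heavy external machinery already applies.

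Two smaller remarks. For $m=1$ your observation that subdivisions and minors coincide for subcubic $H$ is correct, but this is already subsumed by the general reduction, since one may take $J=H$. And the discussion of the $\Omega(k\log k)$ lower bound is not needed for the stated theorem, which only asks for \emph{some} function~$f$.
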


Actually, our discussion of \autoref{thm:paritep} in
\autoref{sec:combcor} also applies to \autoref{thm:subcub}. Namely, the
function $f$ that can be extracted from Thomassen's proof is doubly
exponential in $k$ (for fixed $m$ and $H$) and was later improved to $f(k) =
O(k \log^d k)$ for some $d$ by Chekuri and
Chuzhoy~\cite{chekuri2013large}.
We prove here that \autoref{thm:subcub} holds for $f(k) = O(k \log k)$.

\begin{theorem}\label{thm:subcubopt}
  For every planar subcubic graph $H$ and every positive integer $m$
  there is a constant $c:=c(m,H)$ such that, for every
  $k \in \N$ and every graph $G$, either $G$ contains $k$
  vertex-disjoint $(0 \mod m)$-subdivisions of $H$ as subgraphs, or there is a subset
  $X$ of at most $c k \log (k+1)$ vertices such that $G-X$ contains no such subgraph.
\end{theorem}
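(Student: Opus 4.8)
The plan is to transpose the proof of \autoref{thm:main} to this setting, using \autoref{thm:main_technical} essentially as a black box. Write $\nu^\star(G)$ and $\tau^\star(G)$ for, respectively, the maximum number of vertex-disjoint $(0 \mod m)$-subdivisions of $H$ contained in $G$ as subgraphs, and the minimum size of a vertex set meeting all of them (with $m$ and $H$ fixed throughout; note that both parameters are monotone under taking subgraphs). First I would reduce to the case where $H$ is connected. The reduction mirrors the treatment of disconnected targets in \autoref{sec:apalg}: one first covers the $(0 \mod m)$-subdivisions of a connected subcubic planar graph $H'$ obtained from $H$ by joining its components in a planar way through new vertices of degree two (chosen so that every $(0 \mod m)$-subdivision of $H'$ contains one of $H$, by re-merging the pair of subpaths across each subdivided attachment edge), and then handles $H$ itself on the resulting graph — which has bounded treewidth — via the packing-and-covering theorem of Robertson and Seymour~\cite[Theorem~8.8]{robertson1986graph}, applicable here because ``being a $(0 \mod m)$-subdivision of $H$'' is expressible in monadic second-order logic.

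The geometric input replacing \autoref{lem:big-orchard} is: there is a constant $r=r(m,H)$ such that every graph of treewidth at least $r$ contains a $(0 \mod m)$-subdivision of $H$. I would prove this by a routing argument: by \autoref{th:gridminor} such a graph contains a subdivision of a large wall, and since $H$ is subcubic and planar one can embed a subdivision of $H$ in the wall, rerouting each of the finitely many branch-paths locally through spare bricks so that, as realised in the host graph, each branch-path uses a number of edges divisible by $m$; combined with \autoref{th:gridminor} and \autoref{th:brambleduality}, the same conclusion holds with ``contains a sufficiently large orchard'' in place of ``treewidth at least $r$''. Consequently, if we fix $p$ and an integer $N$ large enough that $K_p$ and the $N$-wall $W_N$ each have treewidth at least $r$, then every $K_p$-model and every $W_N$-model in any graph has treewidth at least $r$, hence contains a $(0 \mod m)$-subdivision of $H$ on no more vertices than the model itself.

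Now I would run the proof of \autoref{thm:main} with $W_N$ (a planar graph) in the role of the forbidden minor: take $G$ minimising $(\nu^\star(G), |G|, \|G\|)$ lexicographically among all counterexamples to $\tau^\star(G) \le c k \log(k+1)$, so that $G$ is minimal under vertex and edge deletions with $\tau^\star(G) > c k \log(k+1)$; set $k:=\nu^\star(G)$ and apply \autoref{thm:main_technical} with planar graph $W_N$, parameter $p$ as above, and $g$ the (non-decreasing, $g(0)=1$) threshold function provided by the analogue of \autoref{thm:fjw13} below. Its first two outcomes give a $W_N$-model or a $K_p$-model of size $O(\log|G|)$, hence by the previous paragraph a $(0 \mod m)$-subdivision $S$ of $H$ of size $O(\log|G|)$. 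Its third outcome is a bounded-order separation $(A,B)$ with $G[A]$ excluding $W_N$ as a minor, hence of bounded treewidth; an analogue of \autoref{thm:fjw13} — for separations whose inner part has bounded treewidth rather than being $H$-minor free — then yields a strictly smaller graph with the same $\nu^\star$ and $\tau^\star$, contradicting minimality. So $S$ exists, and an analogue of the kernelization \autoref{thm:fominkernel} — a graph minimal under vertex and edge deletions with $\tau^\star=t$ has $\mathrm{poly}(t)$ vertices — gives $|G| = \mathrm{poly}(\tau^\star(G)) = \mathrm{poly}(c k \log k)$, whence $|S| = O(\log|G|) = O(\log k)$. Deleting $V(S)$ drops $\nu^\star$ by at least one and $\tau^\star$ by at most $|S|$, and the bound $\tau^\star(G) \le c k \log(k+1)$ follows by induction exactly as in the proof of \autoref{thm:main}. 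Finally, \autoref{cor:parity} is the case $H=K_3$ of \autoref{thm:subcubopt}, once one observes that the at most two cycle lengths that are divisible by $m$ but too short to be $(0 \mod m)$-subdivisions of $K_3$ (namely $m$ and $2m$, when $m\ge 3$) contribute only an additive $O(\nu)$ term, by an elementary bounded-length argument.

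The main obstacle is that the family of graphs with no $(0 \mod m)$-subdivision of $H$ is \emph{not} minor-closed — contracting an edge can shorten a path and create such a subdivision — so \autoref{thm:fjw13} and \autoref{thm:fominkernel} do not apply verbatim, and the two analogues invoked above must be established from scratch. Both are nonetheless instances of protrusion replacement / meta-kernelization: the family at hand has bounded treewidth (by the geometric input above), is closed under subgraphs, and is finite-state on tree decompositions (counting edges modulo the fixed integer $m$ is finitely representable), which are precisely the hypotheses under which the machinery behind \autoref{thm:fjw13} of Fiorini, Joret, and Wood and behind \autoref{thm:fominkernel} of Fomin, Lokshtanov, Misra, and Saurabh operates. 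Re-deriving these two statements in this slightly more general setting is the technically most delicate part of the argument; granting them, everything else goes through as for \autoref{thm:main}, and as there the resulting constant $c$ inherits the non-constructiveness of the kernelization.
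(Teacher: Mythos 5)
Your proposal correctly identifies the central difficulty — that the class of graphs with no $(0 \bmod m)$-subdivision of $H$ is closed under subgraphs but not under minors, so \autoref{thm:fominkernel} and \autoref{thm:fjw13} do not apply as stated — but then leaves exactly that difficulty unresolved. You invoke ``protrusion replacement / meta-kernelization'' to re-derive both statements in the non-minor-closed setting, call this ``the technically most delicate part of the argument,'' and stop. As written, that is a genuine gap: the kernelization of Fomin, Lokshtanov, Misra, and Saurabh is proved for \emph{minor}-minimal instances of a minor-closed property, and the Fiorini--Joret--Wood reduction (\autoref{thm:fjw13}) is stated for separations whose inner part is $H$-minor-free. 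Both results lean on minor-closedness at specific points (e.g.\ to argue that a protrusion replacement does not destroy or create the structure being hit), and it is not a routine transcription to carry them over to subdivisions counted modulo $m$; you give no argument, only a list of plausible-looking hypotheses. The rest of the outline (the wall-routing argument for the ``geometric input,'' the MSOL-based treatment of disconnected $H$ on the residual bounded-treewidth graph) is fine, but it sits on top of these two unproved analogues.

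The paper avoids this entirely by never running the kernelization machinery on $(0 \bmod m)$-subdivisions. It instead cites Thomassen's Proposition~3.2, restated as \autoref{lem:thom}: for every planar subcubic $H$ and every $m$ there is a single planar graph $J$ such that the graph of \emph{every} $J$-minor-model contains a $(0\bmod m)$-subdivision of $H$. One then applies \autoref{thm:main} to $J$ — an ordinary planar-minor packing problem where all the minor-closed machinery is available as is — to obtain an $O(k\log k)$ bounding function for $J$-models, and finally transfers this to subdivisions via \autoref{lem:minorep_alt}, which is precisely the black-box tool for converting a bounding function for $J$-models into one for packing/covering subgraphs from a class $\mathcal{H}$ that is forced inside every $J$-model. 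The whole proof of \autoref{thm:subcubopt} is then a few lines. Your ``geometric input'' paragraph is essentially a re-derivation of what Thomassen's lemma already supplies; the idea you are missing is to push the difficulty into the \emph{minor} world by packing $J$-models first and only afterwards converting to subdivisions, rather than rebuilding the kernelization and the separation-reduction in a setting where minor-closedness fails.
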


As noted in \autoref{sec:combcor}, this bound is optimal, up to the
value of~$c$. 
\autoref{cor:parity} is the special case of \autoref{thm:subcubopt} where $H$ consists of a unique vertex with a loop.\footnote{The reader might rightly object that only simple graphs were considered so far in the paper. 
While it is true that the proof of \autoref{thm:subcubopt} works even if $H$ is a planar subcubic multigraph, let us mention the following alternative way of deducing \autoref{cor:parity} without resorting to multigraphs: Take $H$ to be a triangle. Then $(0 \mod m)$-subdivisions of $H$ correspond to cycles of lengths $0 \mod m$ that are at least $3m$. Thus, applying \autoref{thm:subcubopt} with this graph $H$, we obtain either $k$ vertex-disjoint such cycles in $G$, in which case we are done, or a subset $X$ of at most $c k \log (k+1)$ vertices such that $G-X$ has no such cycles. 
In the latter case, $G-X$ could still have cycles of length $m$ or $2m$. 
However, it suffices to take an inclusion-wise maximal packing of these in $G-X$: If we find at least $k$ of them, we are done. And if not, then we let $Y$ be the set at most $2m(k-1)$ vertices of the cycles in the packing. Then $G-(X\cup Y)$ has no cycle of length $0\mod m$, and $|X\cup Y| \leq c k \log (k+1) + 2m(k-1) = O(k \log k)$, as desired.}

Given a graph $J$ and a $J$-model $\{M_x: x\in V(J)\}$ in a graph $G$, let us define the {\em graph 
of the model} as the induced subgraph $G[\bigcup_{x\in V(J)} V(M_x)]$. 
We use the following lemma proved by Thomassen in his proof of~\autoref{thm:subcub}. 

\begin{lemma}[Thomassen {\cite[Proposition 3.2]{Thomassen1988}}]
\label{lem:thom} For every planar subcubic graph $H$ and every $m\in
\N$, there is a planar graph $J$ such that, for every graph $G$ 
and every $J$-model in $G$, the graph of the model contains a $(0 \mod m)$-subdivision of~$H$.
\end{lemma}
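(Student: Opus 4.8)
The plan is to reduce the construction of $J$, via the Grid Minor Theorem (\autoref{th:gridminor}), to a purely structural statement about large walls, and then to prove that statement by routing a skeleton copy of $H$ inside such a wall and correcting its edge lengths modulo $m$. We may assume $H$ is connected: for disconnected $H$ one takes $J$ to be the disjoint union of the graphs produced for the components of $H$ (a disjoint union of planar graphs is planar), and a $J$-model then splits into vertex-disjoint models of these pieces. Fix a plane embedding of $H$.

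\emph{The reduction.} For $N\in\N$ let $W_N$ denote the $N\times N$ wall, a plane subcubic graph whose treewidth tends to infinity with $N$; we take $J:=W_N$ for $N$ large in terms of $H$ and $m$. If $\{M_x\}$ is a $J$-model in a graph $G$, then its graph $G_0$ contains $W_N$ as a minor, so $\tw(G_0)\geq\tw(W_N)$, and hence, choosing $N$ large enough, \autoref{th:gridminor} forces $G_0$ to contain an arbitrarily large wall $W_k$ as a minor; since walls are subcubic, a $W_k$-minor is a $W_k$-topological-minor. Thus $G_0$ contains a subdivision $\widetilde{W}$ of a large wall as a subgraph, and it suffices to prove: \emph{for every plane subcubic graph $H$ and every $m\in\N$ there is $k$ such that every subdivision of the $k\times k$ wall contains a $(0\mod m)$-subdivision of $H$}. (One could also enrich $J$ by adding a bounded number of edges to $W_N$, keeping it planar, so as to force $G_0$ to contain a subdivision of some prescribed subcubic planar gadget rather than a plain wall; I expect this extra freedom to be convenient below.)

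\emph{Routing and correction.} Inside $\widetilde{W}$ one first finds a skeleton copy of $H$: for each edge $uv\in E(H)$ a path $Q_{uv}$, with the $Q_{uv}$ pairwise internally disjoint, with the vertices of $H$ sent to distinct branch vertices of the correct degree (at most $3$), and with a large, pairwise-disjoint, unused ``reservoir'' part of $\widetilde{W}$ left alongside each $Q_{uv}$; this is a routine routing argument for large walls. It then remains to correct the length of each $Q_{uv}$ to a multiple of $m$ without moving its endpoints. When $H$ is a tree this is easy: process the edges from the leaves inwards, shortening a route edge by edge or lengthening it by a detour through its reservoir, so that every residue modulo $m$ becomes reachable. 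The whole difficulty is concentrated in the cycles of $H$ --- in the extreme case where $H$ is a single loop, where one must merely produce one cycle of $\widetilde{W}$ whose length is divisible by $m$.

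\emph{Main obstacle.} The hard step is the modular length correction on the cyclic part against an adversarial subdivision. A cycle of $\widetilde{W}$ has length $\sum_{e\in C}\ell(e)$, where $C$ is its underlying wall-cycle and the integers $\ell(e)\geq 1$ are chosen by the adversary, so a single detour around a face changes a cycle's length by an amount the adversary controls; if the subdivision is chosen so that all such amounts are multiples of $m$, fine corrections seem unavailable. This is precisely why the \emph{high treewidth} of the wall must be used in an essential way (a tree-like or bounded-treewidth gadget can be subdivided so as to carry a weighting of its edges by $\mathbb{Z}/m\mathbb{Z}$ with no zero-weight cycle). The plan is to exploit the many vertex-disjoint nested cycles of a wall, together with the additive identities relating the lengths of a cycle, of a neighbouring cycle, and of their symmetric difference, and a pigeonhole / additive-combinatorics argument over these lengths modulo $m$; and, to defeat the residual parity obstruction, either to add a constant number of extra edges to $J$ so that $G_0$ is forced to contain an odd-cycle gadget, or to use an odd ``super-face'' of the wall. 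This should give that a large subdivided wall contains cycles of every residue modulo $m$, and, more strongly, that each route $Q_{uv}$ can be endowed with enough slack to attain its required length; making this precise, and handling the point where up to three corrected routes meet at a vertex of $H$, is the crux of the argument, whereas the reductions and the routing are routine. Unwinding the reduction, the planar graph $J:=W_N$ (suitably enriched) with $N$ large enough then satisfies the lemma.
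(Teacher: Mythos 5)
The paper does not prove this lemma at all: it is quoted verbatim from Thomassen's 1988 paper as Proposition~3.2, so there is no in-paper argument to compare against. You are therefore on your own here, and you should judge your sketch against a complete proof, not against the paper.

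On the reduction, you take a slightly roundabout path: having concluded that $G_0$ (the graph of the $J$-model) has a $W_N$-minor, you invoke \autoref{th:gridminor} to extract a wall \emph{topological} minor. This detour is unnecessary. A wall is subcubic, and a standard fact is that whenever a subcubic graph $H$ is a minor of a graph, it is already a topological minor of it (pick the branch sets to be trees; a tree with at most three designated attachment vertices contains a subdivided claw or path joining them). So a $W_N$-model immediately yields a subdivision of $W_N$ inside $G_0$, with $k=N$ and no appeal to the Grid Minor Theorem. This is harmless but worth noting, since it shows the real content lies entirely in the wall statement, not in the reduction.

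The genuine gap is the one you flag yourself: the modular length correction. Your target claim --- for every plane subcubic $H$ and every $m$ there is $k$ such that \emph{every} subdivision of $W_k$ contains a $(0 \bmod m)$-subdivision of $H$ --- is exactly the content of Thomassen's Proposition~3.2, and your sketch gestures at a pigeonhole over nested cycle lengths and an odd-face gadget without carrying it out. Your concern is well founded: local detours around a single face can be made useless by an adversary who subdivides each wall edge by a multiple of $m$, so any valid argument must be global, exploiting the $\Theta(k)$ pairwise disjoint routes a large wall supplies and an averaging/pigeonhole argument on their lengths modulo~$m$, together with the freedom to reroute where three branches of $H$ meet. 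As written, the sketch correctly identifies the reduction and the difficulty but does not close it, so it is not a proof; one would either have to supply the missing wall argument or, as the paper does, simply cite Thomassen.
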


We will also need the following more precise version of Lemma~\ref{lem:minorep}, which appears in~\cite{ErdosPosaWheels}. 

\begin{lemma}[Aboulker, Fiorini, Huynh, Joret, Raymond, and Sau
\cite{ErdosPosaWheels}, reworded]\label{lem:minorep_alt} Let $J$ be a
planar graph and let $g$ be a bounding function for $J$-models.  If
$\mathcal{H}$ is a class of graphs such that for every $J$-model, the graph of the model contains a graph in $\mathcal{H}$ as a subgraph, then there is a function $f = O(g)$ such that, for every graph $G$ and every $k\in \N$, at least one of the following holds. 
  \begin{itemize}
  \item $G$ has $k$ vertex-disjoint subgraphs, each isomorphic to an
element of $\mathcal{H}$; 
  \item there is a set $X$ of at most $f(k)$ vertices such that $G-X$
has no subgraph isomorphic to an element of $\mathcal{H}$.
  \end{itemize}
\end{lemma}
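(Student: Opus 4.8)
The plan is to prove the equivalent statement that $\tau_{\mathcal{H}}(G)\le f(\nu_{\mathcal{H}}(G))$ for some non-decreasing $f=O(g)$, where $\nu_{\mathcal{H}}(G)$ is the maximum number of pairwise vertex-disjoint subgraphs of $G$ isomorphic to elements of $\mathcal{H}$ and $\tau_{\mathcal{H}}(G)$ is the minimum size of a vertex set meeting all of them; the either--or formulation then follows, since $\nu_{\mathcal{H}}(G)<k$ gives $\tau_{\mathcal{H}}(G)\le f(k)$. We may assume $g$ is non-decreasing, and we will use that $g(k)\ge k$ for all $k$ (taking $G$ to be $k$ disjoint copies of $J$ gives $\nu_J(G)=k$ and, since $\tau\ge\nu$ always, $\tau_J(G)\ge k$, forcing $g(k)\ge k$). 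Three observations drive the argument: the hypothesis forces $\nu_J(G)\le\nu_{\mathcal{H}}(G)$; a vertex set hitting all $J$-models leaves a $J$-minor-free, hence bounded-treewidth, graph; and on a graph of bounded treewidth the Erd\H{o}s-P\'osa property holds for any family of connected subgraphs with a bounding function linear in $\nu$. Chaining the two resulting hitting sets yields the desired $f$.

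Concretely, fix $G$ and put $k:=\nu_{\mathcal{H}}(G)$; we may assume $k\ge 1$, since otherwise $\tau_{\mathcal{H}}(G)=0$. Any family of $t$ pairwise vertex-disjoint $J$-models in $G$ yields, by applying the hypothesis to the graph of each model, $t$ pairwise vertex-disjoint $\mathcal{H}$-subgraphs, so $\nu_J(G)\le k$. As $g$ bounds $J$-models and is non-decreasing, $\tau_J(G)\le g(\nu_J(G))\le g(k)$, so there is $X_0\subseteq V(G)$ with $|X_0|\le g(k)$ such that $G-X_0$ has no $J$-model, i.e.\ is $J$-minor-free. Since $J$ is planar, \autoref{th:gridminor} gives $\tw(G-X_0)\le w$, where $w:=f_{\sref{th:gridminor}}(|J|)-1$ is a constant depending only on $J$.

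It remains to destroy the $\mathcal{H}$-subgraphs inside $G-X_0$. Fix a tree-decomposition $(T,\beta)$ of $G-X_0$ of width at most $w$, rooted arbitrarily. For a connected subgraph $S$ of $G-X_0$, the nodes $t$ with $\beta(t)\cap V(S)\neq\emptyset$ form a subtree of $T$; let $r_S$ be its topmost node, so $\beta(r_S)\cap V(S)\neq\emptyset$. Process the (finitely many) $\mathcal{H}$-subgraphs of $G-X_0$ in order of non-increasing depth of $r_S$, greedily building a packing $\mathcal{S}$ (initially empty): add the current $\mathcal{H}$-subgraph $S$ to $\mathcal{S}$ if it is vertex-disjoint from every member of $\mathcal{S}$ already chosen and from $Y:=\bigcup_{S'\in\mathcal{S}}\beta(r_{S'})$. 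At the end $\mathcal{S}$ is a set of pairwise vertex-disjoint $\mathcal{H}$-subgraphs, so $|\mathcal{S}|\le\nu_{\mathcal{H}}(G-X_0)\le k$ and $|Y|\le(w+1)k$. Moreover $Y$ meets every $\mathcal{H}$-subgraph of $G-X_0$: if some such $S$ avoided $Y$ it was not added, hence it shares a vertex with some $S'\in\mathcal{S}$ chosen earlier; then the subtrees associated with $S$ and $S'$ intersect, so $r_S$ and $r_{S'}$ are comparable in $T$, and since $S'$ was processed first $r_{S'}$ is equal to or a descendant of $r_S$, hence $r_{S'}$ lies in the subtree associated with $S$, i.e.\ $\beta(r_{S'})\cap V(S)\neq\emptyset$, contradicting $V(S)\cap Y=\emptyset$. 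Thus $(G-X_0)-Y$ has no $\mathcal{H}$-subgraph. Taking $X:=X_0\cup Y$, every $\mathcal{H}$-subgraph of $G$ meets $X_0$ or lies in $G-X_0$ and meets $Y$, so $G-X$ has no $\mathcal{H}$-subgraph, while $|X|\le g(k)+(w+1)k\le(w+2)\,g(k)$ using $g(k)\ge k$. Hence $f(k):=(w+2)\,g(k)$ works, and $f=O(g)$.

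The step needing the most care is the bounded-treewidth Erd\H{o}s-P\'osa statement used above. The ``deepest first, collect top bags'' greedy is clean when the members of $\mathcal{H}$ are connected; that $\mathcal{H}$ may be infinite and its members arbitrarily large is harmless, as the argument never enumerates $\mathcal{H}$. Disconnected members do force heavier bookkeeping --- one keeps a top bag per component, so that the bound $w+1$ is replaced by a constant depending also on the number of components --- and this is unproblematic for the uses of this lemma in the paper (e.g.\ via Lemma~\ref{lem:thom}, and via the reduction to connected graphs made elsewhere in the paper); one may also first replace $\mathcal{H}$ by its $\subseteq$-minimal members, which changes neither $\nu_{\mathcal{H}}$ nor $\tau_{\mathcal{H}}$. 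Everything else --- the inequality $\nu_J(G)\le\nu_{\mathcal{H}}(G)$, the treewidth bound via planarity of $J$, and the final arithmetic --- is routine.
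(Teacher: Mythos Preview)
The paper does not give its own proof of this lemma; it is quoted (in reworded form) from~\cite{ErdosPosaWheels}, so there is nothing to compare against directly. Your argument---hit the $J$-models with $g(k)$ vertices, observe the remainder has bounded treewidth by \autoref{th:gridminor}, then run a ``deepest-top-bag'' greedy to hit the $\mathcal{H}$-subgraphs there with $O(k)$ vertices---is the natural one and is presumably close to the original proof. The chain of inequalities $\nu_J(G)\le\nu_{\mathcal{H}}(G)$, $\tau_J(G)\le g(\nu_{\mathcal{H}}(G))$, and the justification that $g(k)\ge k$ are all clean.

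The one point that deserves a sharper treatment is the disconnected case, which you flag but do not fully resolve. Your claimed bound $f(k)=(w+2)g(k)$, with the constant depending only on $J$, is actually false as stated: take $J=P_3$, $\mathcal{H}=\{P_3,\,MK_1\}$ for a large constant $M$, and $G$ a perfect matching on $2n$ vertices; then $\nu_{\mathcal{H}}(G)\approx 2n/M$ while $\tau_{\mathcal{H}}(G)\approx 2n$, so the ratio is roughly $M$, which can exceed $(w+2)c$ for any constants $w,c$ depending only on $J$. Your remark that ``the bound $w+1$ is replaced by a constant depending also on the number of components'' is the correct fix, but the constant must be allowed to depend on $\mathcal{H}$ (specifically, on the maximum number of components among the $\subseteq$-minimal members of $\mathcal{H}$), not only on $J$. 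This is harmless for the lemma as stated, since $f=O(g)$ permits the implicit constant to depend on both $J$ and $\mathcal{H}$, and it is certainly harmless for the application to \autoref{thm:subcubopt}, where the members of $\mathcal{H}$ are subdivisions of a fixed graph $H$ and hence all have exactly as many components as $H$.
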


The proof of \autoref{thm:subcubopt} is now immediate.
\begin{proof}[Proof of \autoref{thm:subcubopt}.]
Let $\mathcal{H}$ be the set of $(0 \mod m)$-subdivisions of $H$ and let $J$ be a planar
graph as in \autoref{lem:thom}.  By \autoref{thm:main}, there is a bounding function $g(k) = O(k \log k)$ for $J$-models.
Then using \autoref{lem:minorep_alt} with the graph $J$, the bounding function $g$, and the class $\mathcal{H}$, we obtain a bounding function of order $O(k \log k)$ satisfying the desired properties.
\end{proof}

Note that the same proof as above using a (linear) bounding function
provided by \autoref{thm:linear} instead of \autoref{thm:main} yields
the following corollary.
\begin{corollary}\label{cor:linear-parity}
  Let $\mathcal{G}$ be a proper minor-closed class, let $H$ be a
  planar subcubic graph, and let $m$ be a positive integer. Then there
  is a constant $c:=c(\mathcal{G}, H, m)$ such that, for every $k \in \N$
  and every graph $G\in \mathcal{G}$, either $G$ contains $k$
  vertex-disjoint $(0 \mod m)$-subdivisions of $H$ as subgraphs, or there is a subset
  $X$ of at most $c\cdot k$ vertices such that $G-X$ has no such subdivision.
\end{corollary}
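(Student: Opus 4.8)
The plan is to imitate the proof of \autoref{thm:subcubopt} almost word for word, substituting \autoref{thm:linear} for \autoref{thm:main} so that the resulting bounding function is linear rather than $O(k\log k)$. First I would set $\mathcal{H}$ to be the set of $(0 \mod m)$-subdivisions of $H$, and invoke \autoref{lem:thom} to obtain a planar graph $J$ such that, for every graph and every $J$-model in it, the graph of that model contains a member of $\mathcal{H}$ as a subgraph. Since $\mathcal{G}$ is a proper minor-closed class and $J$ is planar, \autoref{thm:linear} applied to $\mathcal{G}$ and $J$ provides a constant $c_J := c(\mathcal{G},J)$ with $\tau_J(G) \le c_J\,\nu_J(G)$ for all $G \in \mathcal{G}$; equivalently, $g(k):=c_J k$ is a linear bounding function for $J$-models \emph{in} $\mathcal{G}$.

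Next I would plug $J$, the bounding function $g$, and the family $\mathcal{H}$ into \autoref{lem:minorep_alt}, obtaining a function $f = O(g) = O(k)$ such that every graph either has $k$ vertex-disjoint subgraphs each isomorphic to a member of $\mathcal{H}$ (that is, $k$ vertex-disjoint $(0 \mod m)$-subdivisions of $H$) or a set of at most $f(k)$ vertices meeting all of them. Choosing $c := c(\mathcal{G},H,m)$ to be the constant hidden in $f = O(k)$ would then give precisely the statement of \autoref{cor:linear-parity} --- provided the application of \autoref{lem:minorep_alt} is carried out within the class $\mathcal{G}$.

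The only genuinely delicate point, and what I expect to be the main obstacle, is exactly this last proviso: \autoref{lem:minorep_alt} is stated with an input bounding function valid over \emph{all} graphs, whereas \autoref{thm:linear} only supplies one valid on $\mathcal{G}$, and indeed no linear bounding function for $J$-models can hold over all graphs once $J$ has a cycle. The way around this is to observe that the proof of \autoref{lem:minorep_alt} uses the \ep{} property of $J$-models only on the input graph itself and on subgraphs of it obtained by deleting vertices: after deleting such a hitting set for $J$-models one is left with a $J$-minor-free, hence bounded-treewidth, graph, on which a separate packing/covering argument is run. As $\mathcal{G}$ is closed under vertex deletion and under minors, every graph to which $g$ is applied in that proof remains in $\mathcal{G}$, so the argument goes through verbatim with $g$ valid only on $\mathcal{G}$ and yields an $f = O(g)$ valid on $\mathcal{G}$. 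If one wishes to avoid re-examining that proof, one can inline the relevant steps: from $\nu_{\mathcal{H}}(G) < k$ one gets $\nu_J(G) < k$ via \autoref{lem:thom} (vertex-disjoint $J$-models yield vertex-disjoint $\mathcal{H}$-subgraphs), hence a hitting set $X_1$ for $J$-models of size at most $c_J(k-1)$ by \autoref{thm:linear}; then $G - X_1 \in \mathcal{G}$ has treewidth at most the constant $f_{\sref{th:gridminor}}(|J|)$ by \autoref{th:gridminor}, and the bounded-treewidth half of the proof of \autoref{lem:minorep_alt} supplies a hitting set $X_2$ for $\mathcal{H}$-subgraphs of size $O(k)$, so $X := X_1 \cup X_2$ works. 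Finally, disconnectedness of $H$ needs no separate treatment here, since neither \autoref{thm:linear} nor \autoref{lem:thom} (and hence nor \autoref{lem:minorep_alt}) imposes any connectivity hypothesis on $H$.
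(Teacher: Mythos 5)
Your proposal matches the paper's own (one-sentence) proof, which simply observes that the argument for \autoref{thm:subcubopt} goes through verbatim when the linear bounding function from \autoref{thm:linear} is substituted for the $O(k\log k)$ one from \autoref{thm:main}. Your additional paragraph, explaining why it is safe to feed \autoref{lem:minorep_alt} a bounding function valid only on a minor-closed class $\mathcal{G}$ rather than on all graphs, addresses a genuine subtlety that the paper leaves implicit, and your resolution --- that every graph to which the bounding function is applied in the proof of \autoref{lem:minorep_alt} arises from $G$ by vertex deletions and hence stays in $\mathcal{G}$, with the remaining (bounded-treewidth) part of that proof independent of the bounding function --- is sound.
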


We now address the proof of our partitioning corollary.
\begin{proof}[Proof of \autoref{cor:tw}.]
  For every $r \in \N$, let $\Gamma_r$ denote the $r\times r$-grid, which is known to have treewidth~$r$, and let $c_r$ be the constant $c$ in \autoref{thm:main} for $H := \Gamma_r$.
  We prove the result for $f(r) := (c_r + f_{\sref{th:gridminor}}(|\Gamma_r|))$. Let us consider a graph $G$ of treewidth at least~$f(r)\cdot k \log (k+1)$.
  If $\nu_{\Gamma_r}(G) \geq k$, then we are done.
  In the opposite case, by \autoref{thm:main}, $G$ has a set $X$ of at most $c_r \cdot k \log (k+1)$ vertices such that $G - X$ has no $\Gamma_r$-model.
  According to \autoref{th:gridminor}, $G-X$ has treewidth less than $f_{\sref{th:gridminor}}(|\Gamma_r|)$.
  By the properties of treewidth, we have $\tw(G) \leq \tw(G-X) + |X| < f_{\sref{th:gridminor}}(|\Gamma_r|) + c_r \cdot k \log (k+1)$.
  This is in contradiction with the assumption $\tw(G) \geq f(r)\cdot k \log (k+1)$.
\end{proof}

Let us conclude this section with the algorithms to compute minor-closed
bidimensional parameters. The proofs are essentially the same as
in~\cite{chekuri2013large} but since they are short, we include them
for completeness.

\begin{proof}[Proof of~\autoref{cor:miclopar1}]
  Let $k' = s_{\sref{cor:tw}}(p) (k+1) \log(k+2)$.
  We use an approximation algorithm for treewidth, for instance that
  of \cite{amir2010approximation}, that given $G$ and $k'$, either
  produces a tree-decomposition of $G$ of width $4k'$
  or correctly concludes that $\tw(G) \geq k'$, in  $2^{O(k')} n^{O(1)}$ time.

  In the first case, we use the algorithm required by the statement of the
  corollary on the tree-decomposition output by the approximation
  algorithm in order to decide whether $\pi(G) \leq k$ in time
  $h(4k') \cdot n^{O(1)}$.
  In the second case we immediately conclude that $(G,k)$ is a
  negative instance.
  Indeed, by \autoref{cor:tw}, $G$ then contains as a minor
  (actually, as subgraph) the disjoint union of $k+1$ graphs of
  treewidth at least $p$. From the properties of $\pi$ we deduce
  $\pi(G) \geq k+1$.
  The total worst-case running time is \[2^{O( s_{\sref{cor:tw}}(p)
    (k+1) \log(k+2))} n^{O(1)} + h(4 s_{\sref{cor:tw}}(p) (k+1)
  \log(k+2)) \cdot n^{O(1)},\] as claimed.
\end{proof}

\begin{proof}[Proof of~\autoref{cor:miclopar2}]
  Observe that $\pi$ is positive on all graphs with treewidth at
  least~$f_{\sref{th:gridminor}}(t)$, as any such graph contains $H$
  as a minor (\autoref{th:gridminor}) and $\pi$ is minor-closed.
  The result then follows from \autoref{cor:miclopar1} with~$p =
  f_{\sref{th:gridminor}}(t)$.
\end{proof}

\section*{Acknowledgements} 
\label{sec:ack}
\addcontentsline{toc}{section}{\nameref{sec:ack}}

We are much grateful to the three anonymous referees for their careful reading of the paper and their very helpful comments. 

\bibliographystyle{amsplain}


\begin{aicauthors}
\begin{authorinfo}[wcvb]
  Wouter Cames van Batenburg\\
  D\'epartement d'Informatique\\
  Universit\'e libre de Bruxelles\\
  Brussels, Belgium\\
  wcamesva\imageat{}ulb\imagedot{}ac\imagedot{}be\\
  \url{http://homepages.ulb.ac.be/~wcamesva}
\end{authorinfo}
\begin{authorinfo}[th]
  Tony Huynh\\
  D\'epartement de Math\'ematique\\
  Universit\'e libre de Bruxelles\\
  Brussels, Belgium\\
  tony\imagedot{}bourbaki\imageat{}gmail\imagedot{}com\\
  \url{https://sites.google.com/site/matroidintersection}
\end{authorinfo}
\begin{authorinfo}[gj]
  Gwena\"el Joret\\
  D\'epartement d'Informatique\\
  Universit\'e libre de Bruxelles\\
  Brussels, Belgium\\
  gjoret\imageat{}ulb\imagedot{}ac\imagedot{}be\\
  \url{http://di.ulb.ac.be/algo/gjoret}
\end{authorinfo}
\begin{authorinfo}[jfr]
  Jean-Florent Raymond\\
  Logic and Semantics Research Group\\
  Technische Universit\"at Berlin\\
  Berlin, Germany\\
  raymond\imageat{}tu-berlin\imagedot{}de\\
  \url{https://www.user.tu-berlin.de/jraymond}
\end{authorinfo}
\end{aicauthors}

\end{document}